\DeclareMathRadical{\sqrtsign}{symbols}{"70}{largesymbols}{"70}
\newcommand{\bb}{\mathbb}
\newcommand{\integers}{{\bb Z}}
\newcommand{\natls}{{\bb N}}
\newcommand{\reals}{{\bb R}}
\newlength{\figboxwidth}             
\renewcommand{\bold}[1]{\medskip \noindent {\bf #1 }\nopagebreak}
\newcommand{\isom}{\cong}
\newcommand{\cross}{\times}
\newcommand{\st}{\;\: : \;\:}         %Such that
\newcommand{\zed}{\integers}
\newcommand{\diam}{\operatorname{diam}}
\def\@ifundefined#1#2#3%
\theoremstyle{plain} %default
\newtheorem{theorem}{Theorem}[section]
\newtheorem{prop}[theorem]{Proposition}
\newtheorem{lemma}[theorem]{Lemma}
\newtheorem{cor}[theorem]{Corollary}
\newtheorem{corollary}[theorem]{Corollary}
\newtheorem{claim}[theorem]{Claim}
\theoremstyle{definition} %default
\newcommand{\cD}{{\mathcal D}}
\newcommand{\cM}{{\mathcal M}}
\newcommand{\cR}{{\mathcal R}}
\newcommand{\cS}{{\mathcal S}}
\mathchardef\GG="321D
\newcommand{\mcc}[1]{{}} 
\newcommand{\mccc}[1]{{}} 
\definecolor{mygreen}{rgb}{0,.5,0}
\numberwithin{equation}{section}
\title{Self-Joinings for 3-IETs}
\author{Jon Chaika}
\thanks{The research of J. Chaika was supported in part by NSF grants DMS-135500 and DMS- 1452762, the Sloan foundation, Poincar\'e chair, Warnock chair}
\address{Department of Math
155 South 1400 East, JWB 233
Salt Lake City, UT 84112}
\email{chaika@math.utah.edu}
\author{Alex Eskin}
\thanks{The Research  of  A. Eskin is partially supported  by the
  Simons Foundation and 
NSF grant DMS 1201422}
\address{
Department of Mathematics,
University of Chicago,
Chicago, Illinois 60637, USA\\
}
\email{eskin@math.uchicago.edu}
\theoremstyle{plain}
\newtheorem{defin}{Defintion}
\theoremstyle{definition}
\newtheorem{rem}{Remark}
\newtheorem{ques}{Question}
\newcommand{\fp}[1]{\langle\langle #1 \rangle\rangle}
\newcommand{\future}[1]{{}}
\begin{document}

\begin{abstract}
We show that typical interval exchange transformations on three
intervals are not 2-simple answering a question of Veech. Moreover, the set of self-joinings of almost every 3-IET is
 a Paulsen simplex. 
\end{abstract}

\maketitle
\section{Introduction}
\begin{defin} Let $(X,\mathcal{B},\mu,T)$ be a probability measure preserving system. A \emph{self-joining} is a $T \times T$ invariant measure on $X \times X$ with marginals $\mu$. 
\end{defin}
\begin{defin}
$(X,\mathcal{B},\mu,T)$  is called  \emph{2-simple} if every ergodic
self-joining, other than $\mu \times \mu$, is one-to-one on almost every fiber.
\end{defin}
%\begin{defin} Let $(X,\mathcal{B},\mu,T)$ be a probability measure preserving system. It is \emph{simple} if every ergodic self joining, other than $\mu \times \mu$ is one-to-one on a.e. fiber.
%\end{defin}
\begin{defin}A \emph{Poulsen simplex} is a metrizable simplex where the extreme points are dense.
\end{defin}
Lindenstrauss, Olsen and Sternfeld proved that a Poulsen simplex is unique up to affine homeomorphism \cite{LinOlsStern}. 

\begin{defin} a \emph{3-interval exchange transformation} is defined by 3 non-negative numbers $\ell_1,\ell_2,\ell_3$. It is $T:[0,\ell_1+\ell_2+\ell_3) \to [0,\ell_1+\ell_2+\ell_3)$ by 
\begin{displaymath}
T(x) = \begin{cases}
x + \ell_2 + \ell_3 & \text{if $x \le \ell_1 + \ell_2$} \\
x + \ell_2 + \ell_3 - (\ell_1 + 2 \ell_2 + \ell_3) & \text{otherwise.}
\end{cases}
\end{displaymath}
\end{defin}

\begin{theorem}
\label{theorem:Veech}
Almost every 3-IET is not 2-simple. Also, its self-joinings form a Poulsen simplex.
\end{theorem}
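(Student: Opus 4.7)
The plan is to exploit the rank-one structure of a typical 3-IET together with the detailed Rauzy--Veech renormalization combinatorics. For a.e.\ triple $(\ell_1,\ell_2,\ell_3)$, the transformation $T$ is rank one and weakly mixing, and Rauzy--Veech induction produces a nested sequence of Rokhlin towers $\cT_n$ with base measure decreasing to zero and heights tending to infinity. A standard consequence is the existence of a partial rigidity sequence $n_k\to\infty$ with $T^{n_k}$ close to the identity on a set of nearly full measure; these partial rigidity sequences are what drive the construction of self-joinings as weak-$*$ limits of off-diagonal measures supported on graphs of powers of $T$.

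First I would exhibit a single ergodic self-joining that is not one-to-one on fibers, thereby refuting 2-simplicity and answering Veech's question. The idea is to choose, at each stage $n$, a partition of the base $B_n$ of the Rokhlin tower into two measurable pieces of comparable measure, and two return times $p_n\neq q_n$, both near a partial-rigidity time. Using the cutting-and-stacking description one would assemble a measurable map $\phi_n:X\to X$ which, on the bulk of $X$, acts as $T^{p_n}$ on one sub-tower and as $T^{q_n}$ on the other, and then consider the joining $\nu_n = (\id\times\phi_n)_*\mu$. Although $\phi_n$ is not strictly $T$-equivariant, the defect is concentrated in the top $\max(p_n,q_n)$ levels and is negligible relative to the tower heights, so weak-$*$ limits are $T\times T$-invariant. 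With a careful choice of subsequence the limit $\nu$ should be ergodic and $2$-to-$1$ on a set of positive measure; since for a.e.\ choice of lengths the centralizer of $T$ is trivial (consisting only of powers of $T$), such a $\nu$ is neither $\mu\times\mu$ nor a graph joining.

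To upgrade this to the Poulsen conclusion, I would parametrize the construction: at each stage $n$ one is free to choose the proportion in which $B_n$ is partitioned and the multiplicity of the pairing (one piece paired with several shifted copies). Varying these parameters produces a rich family of ergodic self-joinings. By taking partition ratios close to $0$ one can approximate graph joinings $(\id\times T^j)_*\mu$, and by taking highly dispersed pairings one can approximate $\mu\times\mu$. The goal is to show that the closure of this family in the weak-$*$ topology is the entire simplex of self-joinings, which together with Choquet theory forces density of the extreme points.

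The principal obstacle will be controlling ergodicity of the weak-$*$ limits, since generic such limits need not be ergodic. Handling this requires a careful Diophantine analysis of the Rauzy--Veech orbit of almost every 3-IET: one needs subsequences along which the renormalization matrices have uniformly controlled entries and the partial rigidity times $p_n,q_n$ are sufficiently dispersed. This is where Zorich acceleration and the ergodic theory of the Teichm\"uller flow on the stratum $\mathcal{H}(0,0)$ enter; results on the distribution of the Rauzy--Veech cocycle should supply the required statistics for a.e.\ 3-IET. A secondary difficulty lies in the density argument itself: verifying that the parametrized family of ergodic joinings really does span the simplex densely requires a uniform approximation statement for joinings respecting the Rokhlin tower structure, together with a mechanism to rule out collapse of the chosen parameters in the weak-$*$ limit.
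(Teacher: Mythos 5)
Your high-level strategy---renormalization towers, partial rigidity, off-diagonal measures, and weak-$*$ limits---points in the right direction, and you correctly single out ergodicity of the limits as the crux. But the proposal has two genuine gaps. First, the single-scale construction you describe cannot produce an ergodic limit: if at each stage you split the base into two sub-towers and pair them with $T^{p_n}$ and $T^{q_n}$ respectively, the weak-$*$ limit of $(\id\times\phi_n)_*\mu$ is (at best) a convex combination of two off-diagonal joinings, which is manifestly non-ergodic. What is needed, and what the paper supplies in Proposition~\ref{prop:KSV:joining}, is a \emph{recursive, multi-scale switching}: at stage $k$ the measure $\nu_k^{(\ell)}$ tracks $\nu_{k-1}^{(\ell-1)}$ on a set $A_k$ and $\nu_{k-1}^{(\ell)}$ on a set $B_k$, both of measure bounded below, with an equidistribution hypothesis at each scale; ergodicity and identification of the limit with the barycenter then come from a Hilbert-projective-metric contraction (Lemma~\ref{lemma:to:bary}) together with a generic-point argument (Lemma~\ref{lemma:sufficient}). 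Your proposed remedy---Diophantine control of the Rauzy--Veech cocycle---supplies tower statistics but not this switching mechanism. Relatedly, the freedom to ``choose the partition of the base'' is illusory: the paper's key dynamical input (Proposition~\ref{prop:two meas}) is that for a.e.\ 3-IET a \emph{single} power $T^n$ automatically behaves like $T^a$ on a set $A$ and like $T^b$ on its complement $B$, each of measure about $\tfrac12\lambda(K)$, because the Birkhoff sums $\sum_{i<q_{k+2}}\chi_K(R^i x)$ of the underlying rotation take exactly two values $q,q+1$; the sets $A,B$ are dictated by whether the renormalized vertical trajectory crosses a fixed slit, not chosen by hand.

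Second, for the Poulsen conclusion you need an a priori structure theorem: that \emph{every} self-joining of $T$ is a weak-$*$ limit of convex combinations of shifted power joinings. Without this (Theorem~\ref{theorem:SOT close} and Corollary~\ref{cor:WOT close}, a King-type result proved via the rigid-rank-one-by-intervals structure), showing that your parametrized family of ergodic joinings is dense in the full simplex has no foothold, since you have no description of an arbitrary self-joining to approximate. Finally, your route to non-2-simplicity via triviality of the centralizer imports an unproved ingredient and still leaves open that the limit coincides with some $(\id\times T^j)_*\mu$ at a rigidity time $j$; the paper instead quantitatively separates the limit from the product and from all graph joinings, and proves directly that a measure which is one-to-one on a.e.\ fiber cannot be a weak-$*$ limit of measures that are two-to-one on fibers of definite diameter.
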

Note that $T \times T$ has topological entropy 0. %Though there may be earlier examples, we have not found another example of a topological entropy zero system whose invariant probability measures form a Poulsen simplex in the literature. 
\medskip

The first part of Theorem~\ref{theorem:Veech}
answers a question of Veech in the negative
\cite[Question 4.9]{veech crit}. (In \cite{veech crit} ``$2$-simple''
is called ``Property $S$.'') 
%After his paper 
%The proof (modulo the proof of Proposition~\ref{prop:3IET}) is at the
%end of Section \ref{sec:theorem1}.

Recall that a measure preserving system is called {\em prime} if it
has no non-trivial factors. In the paper \cite{veech
  crit} mentioned above, Veech 
classified the factors of  $2$-simple systems,
 and so a natural question remains: 
\begin{ques}
Is almost every 3-IET prime?
\end{ques}
It is also natural to wonder what happens for IETs with other permutations and flows on translation surfaces. It is likely that our techniques can show that residual sets of interval exchange transformations on more intervals, and flows on translation surfaces of genus greater than 1 are not simple, but we do not see how they can be applied to almost every flow on translation surface or IET with different permutation.

To prove Theorem~\ref{theorem:Veech} we define in Section
\ref{sec:strongly rank1} a distiguished class of
self-joinings called ``shifted power joinings.'' In Section
\ref{sec:strongly rank1} we also show that a special type of
transformations called ``rigid rank 1 by intervals'' (which includes IET's by  \cite[Part 1, Theorem 1.4 ]{veech metric})
have the property that linear combinations of shifted power joinings
are dense in their self-joinings.  M. Lemanczyk brought to our attention that this result was proved in an unpublished paper of J. King \cite{king}. 
We then prove that almost every 3-IET has the property that its
ergodic self-joinings  are dense in linear combinations of the shifted power joinings. We do this by having an abstract criterion (Section \ref{sec:KSV}) and showing 3-IETs verify this criterion (Section \ref{sec:theorem1}).

\noindent
\textbf{Context of our results:} Before Veech's work, D. Rudolph introduced the notion of \emph{minimal self joinings}, using it as a fruitful class of examples, including examples of prime systems \cite{min self join}.  The property of 2-simple generalizes minimal self joinings and in particular, 
no rigid system has minimal self joinings. The typical IET is rigid \cite[Part 1, Theorem 1.4]{veech metric}, so the typical IET does not have minimal self joinings, 
 but there are rigid 2 simple systems.
Ageev proved that the set of measure preserving transformations which are not 2 simple contains a dense $G_\delta$, i.e. it is a residual set, (with the topology being the so called \emph{weak topology}) \cite{ageev}.  Our construction can be modified to give a new proof of this fact. 

Our result that the self-joinings form a Paulsen simplex is also perhaps a little unexpected. 
Many examples of systems whose set of invariant measures form a Paulsen simplex are well known, but typically these systems are high complexity, satisfying some form of \emph{specification}. 
In contrast, our examples have very low complexity, as $T\times T$ has \emph{quadratic block growth}. 
Since systems of \emph{linear block growth} have only finitely many ergodic measures \cite{bosh lin}, such a system can not have that the set of its invariant measures form a Paulsen simplex (though as our examples show its Cartesian product could). 
We remark that in the previously mentioned unpublished work, J. King proved a residual set of measure preserving transformations (which therefore must include rank 1 transformations) have that their set of their self-joinings form a Paulsen simplex \cite{king}, giving many (non-explicit) entropy zero examples.
 Our result is perhaps still surprising, because we treat a previously considered family of examples and we show typicality in a metric, rather than topological setting.

 Two key steps are showing that the typical 3-IET admit $(n_j,n_j+1)$ approximation (see the proof of Proposition \ref{prop:two meas}) and that this implies the existence of all sorts of ergodic joinings (see Proposition \ref{prop:KSV:joining}). Some consequences of transformations with $(n_j,n_j+1)$ approximation were studied by Ryzhikov \cite{ryzh3} and as a result we get some spectral consequences for $T^n$ and $\underbrace{T\times...\times T}_{n \text{ times}}$, see Remark \ref{rem:simple}. 
 
 \noindent
 \textbf{Acknowledgments:} We thank El Abdaloui, M. Lemaczyk and V. Ryzhikov for numerous illuminating discussions on connections of other results with this work.

\section{Joinings of rigid rank 1 transformations come from limits of linear combinations of powers}\label{sec:strongly rank1}
Let $([0,1],\mathcal{M},\lambda, T)$ be an ergodic invertible transformation.
\begin{defin}We say $T$ is \emph{rigid rank 1 by intervals} if  there exists a sequence of intervals $I_1,\dots$ and natural numbers $n_1,\dots$ so that 
\begin{itemize}
\item $T^iI_k$ is an interval with $diam(T^iI_k)=diam(I_k)$ for all $0\leq i<n_k$.
\item $T^iI_k\cap T^jI_k=\emptyset$ for all $k$ and $0\leq i<j<n_k$.
\item $\underset{k \to \infty}{\lim} \lambda(\cup_{i=0}^{n_k-1}T^iI_k)=1$.
\item $\underset{k \to \infty}{\lim} \frac{\lambda(T^{n_k}I_k\Delta I_k)}{\lambda(I_k)}=0$. 
\end{itemize}
\end{defin} 
This is a condition saying that our transformation is well
approximated by periodic transformations. A similar condition,
admiting \emph{cyclic approximation by periodic transformations} was
considered in \cite{KS}.

Let
\begin{displaymath}
\cR_k=\bigcup_{i=0}^{n_k-1}T^iI_k,
\end{displaymath}
\begin{equation}
\label{eq:def:hatRk}
\hat{\cR}_k = \bigcup_{i=0}^{n_k-1} T^i( I_k \cap T^{-n_k} I_k \cap
T^{n_k} I_k),
\end{equation}
\begin{displaymath}
\tilde{\cR}_k = \bigcup_{i=0}^{n_k-1} T^i( I_k \cap T^{-n_k} I_k \cap
T^{-2n_k} I_k \cap T^{n_k} I_k \cap T^{2 n_k} I_k),
\end{displaymath}
Then, 
$\cR_k$ is the Rokhlin tower over $I_k$,  $\hat{\cR}_k$ is the Rokhlin
tower over $I_k \cap T^{-n_k}I_k\cap T^{n_k}I_k$, and $\tilde{\cR}_k$ is the Rokhlin tower over 
$\bigcap_{i=-2}^2T^{in_k}I_k$. 
We have
\begin{equation}
\label{eq:property:hatRk}
\hat{\cR}_k=\{x:T^ix \in \cR_k \text{ for all }-n_k< i<n_k\},
\end{equation}
and
\begin{equation}
\label{eq:property:tildeRk}
\tilde{\cR}_k=\{x:T^ix \in \cR_k \text{ for all }-2n_k< i< 2n_k\}. 
\end{equation}
Heuristically one can think of $\cR_k$ as the set of points we can control. $\hat{\cR}_k$ and $\tilde{\cR}_k$ let us control the points for long orbit segments, which is necessary for some of our arguments. 
\begin{lemma}\label{lemma:srank est} $\underset{k \to \infty}{\lim}\lambda(\tilde{\cR}_k)=1=\underset{k \to \infty}{\lim}\lambda(\cR_k)=\underset{k \to \infty}{\lim}\lambda(\hat{\cR}_k)$. 
\end{lemma}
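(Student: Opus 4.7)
The plan is to reduce everything to the four defining properties of a rigid rank 1 by intervals transformation and do a direct measure-theoretic calculation. The convergence $\lim_{k \to \infty} \lambda(\cR_k) = 1$ is literally the third bullet of the definition, so the real content is controlling $\lambda(\cR_k \setminus \hat{\cR}_k)$ and $\lambda(\cR_k \setminus \tilde{\cR}_k)$.

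For $\hat{\cR}_k$, I would use that $T^i I_k$ for $0 \le i < n_k$ are pairwise disjoint (second bullet) and that $T$ preserves $\lambda$. Since
\[
\cR_k \setminus \hat{\cR}_k = \bigsqcup_{i=0}^{n_k-1} T^i\bigl(I_k \setminus (T^{-n_k} I_k \cap T^{n_k} I_k)\bigr),
\]
we get
\[
\lambda(\cR_k \setminus \hat{\cR}_k) = n_k\, \lambda\bigl(I_k \setminus (T^{-n_k} I_k \cap T^{n_k} I_k)\bigr) \le n_k\bigl[\lambda(I_k \Delta T^{n_k}I_k) + \lambda(I_k \Delta T^{-n_k}I_k)\bigr].
\]
Invariance of $\lambda$ under $T$ gives $\lambda(I_k \Delta T^{-n_k}I_k) = \lambda(I_k \Delta T^{n_k} I_k)$. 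Factoring $\lambda(I_k)$ out yields
\[
\lambda(\cR_k \setminus \hat{\cR}_k) \le 2\bigl(n_k\lambda(I_k)\bigr)\cdot \frac{\lambda(T^{n_k}I_k \Delta I_k)}{\lambda(I_k)} \le 2\cdot \frac{\lambda(T^{n_k}I_k \Delta I_k)}{\lambda(I_k)},
\]
using $n_k \lambda(I_k) = \lambda(\cR_k) \le 1$. By the fourth bullet this tends to $0$, and combined with $\lambda(\cR_k) \to 1$ we conclude $\lambda(\hat{\cR}_k) \to 1$.

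For $\tilde{\cR}_k$ the argument is parallel, except the base of the tower now involves $T^{\pm 2 n_k} I_k$ as well. By the triangle inequality
\[
\lambda(T^{2n_k}I_k \Delta I_k) \le \lambda(T^{2n_k}I_k \Delta T^{n_k}I_k) + \lambda(T^{n_k}I_k \Delta I_k) = 2\lambda(T^{n_k}I_k \Delta I_k)
\]
(and likewise for $T^{-2n_k}I_k$, using $T$-invariance of $\lambda$). Summing these four contributions gives
\[
\lambda\Bigl(I_k \setminus \bigcap_{i=-2}^{2} T^{in_k}I_k\Bigr) \le 6\,\lambda(T^{n_k}I_k \Delta I_k),
\]
and the same factoring as before yields $\lambda(\cR_k \setminus \tilde{\cR}_k) \le 6\,\lambda(T^{n_k}I_k \Delta I_k)/\lambda(I_k) \to 0$, hence $\lambda(\tilde{\cR}_k) \to 1$.

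There is no real obstacle here; the only thing one has to be careful about is remembering that the fourth bullet gives a \emph{relative} bound on $\lambda(T^{n_k}I_k \Delta I_k)$ (namely, normalized by $\lambda(I_k)$), so one has to use $n_k\lambda(I_k) \le 1$ at the right moment to turn the per-level estimate into a global estimate on the tower.
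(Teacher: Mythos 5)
Your proof is correct and follows essentially the same route as the paper's: bound the measure of the base of the tower $\hat{\cR}_k$ (resp. $\tilde{\cR}_k$) that is removed from $I_k$ by a constant multiple of $\lambda(T^{n_k}I_k \Delta I_k)$, multiply by $n_k$ using disjointness and measure preservation, and absorb $n_k\lambda(I_k)\le 1$ so the fourth bullet of the definition finishes the argument. The paper compresses the $\tilde{\cR}_k$ case to a single word ``Similarly,'' whereas you spell out the triangle-inequality step $\lambda(T^{2n_k}I_k\Delta I_k)\le 2\lambda(T^{n_k}I_k\Delta I_k)$; the content is identical.
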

\begin{proof}By the third condition in the definition of rigid rank 1
by intervals we have $\underset{k \to \infty}{\lim}
\lambda(\cR_k)=1$. By (\ref{eq:def:hatRk}),
\begin{displaymath}
\lambda(\hat{\cR}_k)\geq\lambda(\cR_k)-n_k\lambda(I_k\setminus
(T^{n_k}I_k \cup T^{-n_k}I_k))
\geq\lambda(\cR_k)-2n_k\lambda(I_k\setminus T^{n_k}I_k),
\end{displaymath}
and thus by the fourth condition of the definition of rigid rank 1 by
intervals, $\underset{k \to \infty}{\lim}\lambda(\hat{\cR}_k) \to 1$. 
% By the fourth condition we have that 
% $\underset{k \to \infty}{\lim}\, \lambda(R_k \Delta \cup_{i=0}^{n_k-1}T^{i }(I_k \cap T^{n_k}I_k \cap T^{-n_k}I_k))=0 $ and so $\underset{k \to \infty}{\lim} \lambda(\hat{R}_k)=1$ because $\hat{R}_k= \cup_{i=0}^{n_k-1}T^{i }(I_k \cap T^{n_k}I_k \cap T^{-n_k}I_k)$.  
Similarly, $\underset{k \to \infty}{\lim}
\lambda(\tilde{R}_k)=1$. 
\end{proof}

%\bold{Shifted power joinings.}
%Let $(X,T,\mu)$ and $(Y,S,\nu)$ be measure preserving dynamical
%systems. Recall that a joining of $(X,T,\mu)$ and $(Y,T,\nu)$ is a
%$T \cross S$-invariant measure $\sigma$ on $X \cross Y$ which projects
%to $\mu$ under the map $X \cross Y \to X$ and to $\nu$ under the map
%$X \cross Y \to Y$. 

\begin{defin}[Shifted Power Joining]
Let $(X,T,\mu)$ be a measure preserving dynamical
system. A self-joining of $(X,T,\mu)$ %with $(X,T,\mu)$
that gives full measure to $\{(x,T^a x)\}$ for some $a\in \zed$ with
$a \ne 0$ is called a \emph{shifted power joining}. 
\end{defin}
These have also been called \emph{off diagonal joinings}.

Let $\iota:[0,1] \to [0,1]$ by $x \to (x,x)$. Let
$\mu=\iota_*\lambda$. Shifted power joinings have the form $(id \times
T^a)_* \mu$ for some $a \in \mathbb{Z} \setminus \{0\}$.

\bold{The operator $A_\sigma$ and convergence in the strong operator topology.}
Let $\sigma$ be a self-joining of $(T,\lambda)$. 
 Let $\sigma_x$ be the corresponding measure on $[0,1]$ coming from disintegrating along $\sigma$ on the fiber $\{x\}\times [0,1]$. 
Define $A_\sigma: L^2(\lambda) \to L^2(\lambda)$ by $A_\sigma(f)[x]=\int f
d\sigma_x$.

Recall that one calls the \emph{strong operator topology}
the topology of pointwise convergence on $L^2(\lambda)$. That is
$A_1,...$ converges to $A_\infty$ in the strong operator topology if
and only if $\underset{ i \to \infty}{\lim}\|A_if-A_{\infty}f\|_2=0$ for all $f \in L^2(\lambda)$. 
\begin{theorem} 
\label{theorem:SOT close}
Assume $([0,1],T,\lambda)$ is rigid
rank 1 by intervals and $\sigma$ is a self-joining of
$([0,1],T,\lambda)$. Then $A_\sigma$ is the strong operator topology
(SOT) limit of linear combinations, with non-negative coefficients, of powers of $U_T$, where $U_T:
L^2([0,1],\lambda) \to L^2([0,1],\lambda)$ denotes the Koopman
operator $U_T(f) = f \circ T$. 
\end{theorem}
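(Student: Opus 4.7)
The plan is to build, for each $k$, a finite non-negative linear combination $B_k = \sum_{a=0}^{n_k-1} c_a^k\,U_T^a$ that converges to $A_\sigma$ in the weak operator topology (WOT), then upgrade to SOT via the classical fact that the WOT- and SOT-closures of any convex subset of $B(L^2)$ coincide.

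By $T\times T$-invariance, $m_a^k := \sigma(T^iI_k\times T^{i+a}I_k)$ depends only on $a$ for valid $i\in[0,n_k)$. Treating $T|_{\cR_k}$ as an approximate cyclic shift of length $n_k$---justified by the rigidity bound $\lambda(T^{n_k}I_k\triangle I_k)=o(\lambda(I_k))$---I take the \emph{cyclic-diagonal weights}
\[
c_a^k := (n_k-a)\,m_a^k + a\,m_{a-n_k}^k,\qquad a\in\{0,1,\ldots,n_k-1\}.
\]
Then $c_a^k\ge 0$, and reindexing the second summand via $b=a-n_k$ yields the telescoping identity
\[
\sum_{a=0}^{n_k-1}c_a^k \;=\; \sum_{|b|<n_k}(n_k-|b|)\,m_b^k \;=\; \sigma(\cR_k\times\cR_k)\;\longrightarrow\;1,
\]
so in particular $\|B_k\|\le 1$.

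For WOT convergence, consider the positive measure $\sigma_k:=\sum_a c_a^k(\id\times T^a)_*\lambda$, which satisfies $A_{\sigma_k}=B_k$. I would verify $\sigma_k\to\sigma$ weakly on $[0,1]^2$: the cyclic-diagonal weighting forces $\sigma_k(T^iI_k\times T^jI_k)\approx c_{(j-i)\bmod n_k}^k\,\lambda(I_k)\approx m^k_{j-i}=\sigma(T^iI_k\times T^jI_k)$, with remainders controlled by $\lambda(\cR_k^c)\to 0$ and by $\eta_k:=\lambda(T^{n_k}I_k\triangle I_k)/\lambda(I_k)\to 0$. The key technical point is that the offsets $\Delta_a:=m^k_{a-n_k}-m^k_a$ obey the \emph{aggregate} bound $\sum_a|\Delta_a|\le\sigma((T^{n_k}I_k\triangle I_k)\times\cR_k)\le\eta_k\lambda(I_k)$, which follows from the $\sigma$-marginal constraint; consequently the rigidity error against any bounded continuous test function aggregates to $O(\eta_k)\to 0$ rather than the $O(\eta_k n_k)$ one would get from a per-atom worst-case estimate. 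Combined with $\diam(I_k)\to 0$ (so continuous test functions are uniformly approximable by step functions adapted to the level partition), this gives $\int\phi\otimes\psi\,d\sigma_k\to\int\phi\otimes\psi\,d\sigma$ for all continuous $\phi,\psi$; uniform boundedness $\|B_k\|\le 1$ then extends this to all $f,g\in L^2$, so $B_k\to A_\sigma$ in WOT.

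Finally, the set $\cC$ of finite non-negative linear combinations of $\{U_T^a:a\in\zed\}$ is convex, and on any convex subset of $B(L^2)$ the WOT- and SOT-closures coincide: SOT- and WOT-continuous linear functionals on $B(L^2)$ agree (each being a finite sum $A\mapsto\sum_i(Ax_i,y_i)$), so by Hahn--Banach the two topologies determine the same closed half-spaces and hence the same closed convex sets. Thus $A_\sigma\in\overline{\cC}^{\mathrm{SOT}}$, completing the proof. The main technical obstacle is arranging $\sum_a c_a^k\to 1$ while keeping the per-atom matching on tower levels: the naive $c_a^k=m_a^k/\lambda(I_k)$ overshoots to $\approx 2$ (already visible for $\sigma=\lambda\otimes\lambda$, where each $c_a^k\approx\lambda(I_k)$ with $\approx 2n_k$ indices), and the cyclic-diagonal definition above is precisely what produces the correct telescoping total while exploiting the aggregate bound on $\Delta_a$ to tame the rigidity error.
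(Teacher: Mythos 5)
Your proposal is correct, but it takes a genuinely different route from the paper's. The paper disintegrates $\sigma$ over the first coordinate and builds \emph{pointwise} coefficients $c_i(x)=\sigma_x(T^{a}I_k\cap\hat{\cR}_k)$; it must then show these are nearly $T$-invariant (Lemma~\ref{lemma:other indices}) and, via a Lusin/Egorov argument (Lemmas~\ref{lemma:most:good} and~\ref{lemma:our point}), locate a single good point $y_k$ whose coefficients work on most of the space, after which it estimates $\|A_\sigma f-\sum_i c_i(y_k)U_T^i f\|_2$ directly for Lipschitz $f$. You instead work with the \emph{global} correlations $m_a^k=\sigma(I_k\times T^aI_k)$ (well defined by $T\times T$-invariance and invertibility), which requires no disintegration or measurable selection; you prove only WOT convergence --- equivalently, weak-$*$ convergence of $\sigma_k=\sum_a c_a^k(\id\times T^a)_*\lambda$ to $\sigma$ --- and upgrade to SOT by the standard Hahn--Banach fact that WOT- and SOT-closures of convex sets of operators coincide (together with metrizability of SOT on the unit ball to extract a sequence). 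The two computations at the core check out: the triangular identity $\sum_a c_a^k=\sum_{|b|<n_k}(n_k-|b|)m_b^k=\sigma(\cR_k\times\cR_k)$ gives the right normalization and $\|B_k\|\le 1$, and the aggregate bound $\sum_a\bigl|m^k_{a-n_k}-m^k_a\bigr|\le\lambda(T^{n_k}I_k\,\Delta\, I_k)$ (disjointness of the levels plus the marginal condition) is exactly what removes the factor of $n_k$ from the rigidity error, so the discrepancy on the level partition is $O(\eta_k+\lambda(\cR_k^c))=o(1)$. Your route is shorter, avoids all the pointwise machinery, and delivers Corollary~\ref{cor:WOT close} for free, since $\sigma_k$ is itself a non-negative combination of shifted power joinings converging weak-$*$ to $\sigma$; what it gives up is the fiberwise information in the paper's proof (that $\sigma_x$ is $d_{KR}$-close to $\sum_j c_j\hat{\delta}_{T^jx}$ for most $x$), which is not needed downstream. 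One point worth stating explicitly: the SOT-convergent sequence produced by the convexity argument consists of possibly different non-negative combinations of powers of $U_T$ than the $B_k$ themselves, but that is still precisely what the theorem asserts.
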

\begin{cor}\label{cor:WOT close} (J. King) Any self-joining of a rigid rank 1 by intervals transformation is a weak-* limit of linear combinations of shifted power joinings.
\end{cor}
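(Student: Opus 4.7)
The plan is to translate the operator-theoretic content of Theorem~\ref{theorem:SOT close} into weak-$\ast$ convergence of measures via the standard bijection between self-joinings $\sigma$ of $([0,1],T,\lambda)$ and Markov operators $A_\sigma: L^2(\lambda) \to L^2(\lambda)$.

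First I would check that shifted power joinings correspond precisely to powers of the Koopman operator under this bijection. If $\sigma = (\id \times T^a)_*\mu$, then $\sigma$ is supported on the graph of $T^a$, so its disintegration is $\sigma_x = \delta_{T^a x}$, whence
\begin{displaymath}
A_\sigma(f)(x) \;=\; \int f\, d\sigma_x \;=\; f(T^a x) \;=\; (U_T^a f)(x).
\end{displaymath}
By linearity of both sides of the correspondence, a non-negative linear combination $\sum_i c_i U_T^{a_i}$ corresponds to the linear combination $\sum_i c_i (\id \times T^{a_i})_*\mu$ of shifted power joinings.

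Next I would identify weak-$\ast$ convergence of joinings with weak operator topology (WOT) convergence of the associated Markov operators. This rests on the identity
\begin{displaymath}
\int g(x)\, f(y)\, d\sigma(x,y) \;=\; \langle A_\sigma f,\, g\rangle_{L^2(\lambda)} \qquad (f,g \in L^\infty),
\end{displaymath}
combined with the Stone-Weierstrass density of product functions $g(x)f(y)$ in $C([0,1]^2)$ and the fact that the operators $A_{\sigma_n}$ have norm bounded by $1$, which lets one pass between $L^\infty$ and $L^2$ test functions. Thus WOT convergence $A_{\sigma_n} \to A_\sigma$ is equivalent to weak-$\ast$ convergence $\sigma_n \to \sigma$.

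Finally, since SOT convergence trivially implies WOT convergence, Theorem~\ref{theorem:SOT close} furnishes a sequence of non-negative linear combinations of powers of $U_T$ converging WOT to $A_\sigma$; translating back via the bijection yields non-negative linear combinations of shifted power joinings converging weak-$\ast$ to $\sigma$, which is precisely what is claimed. There is no substantive obstacle here beyond carefully setting up the joining/Markov-operator correspondence and matching its two natural topologies; modulo that bookkeeping, the corollary is essentially a repackaging of Theorem~\ref{theorem:SOT close}.
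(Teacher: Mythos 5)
Your proof is correct, but it takes a different route from the paper's. The paper does not deduce the corollary from the \emph{statement} of Theorem~\ref{theorem:SOT close}; instead it reaches back into the proof and extracts the pointwise estimate $d_{KR}\bigl(\sigma_x,\sum_{j}c_j(y_k)\hat{\delta}_{(x,T^jx)}\bigr)<5\epsilon$ for all $x$ in the large set $V$, and then integrates over $x$, using \eqref{eq:V big} to control the complement. You instead treat Theorem~\ref{theorem:SOT close} as a black box and invoke the standard joining--Markov operator duality: $A_{(\id\times T^a)_*\mu}=U_T^a$, the identity $\int g(x)f(y)\,d\sigma=\langle A_\sigma f,g\rangle$, Stone--Weierstrass density of spans of products in $C([0,1]^2)$, and the uniform norm (equivalently, mass) bound to upgrade convergence on that dense set to weak-$\ast$ convergence; since SOT implies WOT, the corollary follows. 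This is a clean, soft argument and I see no gap in it --- the only points worth writing out are that the coefficients $c_i(y_k)$ are non-negative with sum at most $1$ (so the approximants are genuine sub-probability linear combinations, with mass tending to $1$), and that the $i=0$ term is the diagonal joining rather than a shifted power joining in the strict sense of the definition (an issue the paper's proof shares, and which is harmless since rigidity lets one replace $\id$ by $T^{n_k}$). What the paper's more hands-on route buys is quantitative $d_{KR}$ control on the disintegrations themselves, which is the form of the statement actually reused later (e.g.\ in the setup of Proposition~\ref{prop:KSV:joining}); what your route buys is independence from the internal bookkeeping of the theorem's proof.
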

These results (or very closely related results) were established earlier by J. King \cite{king} using a different proof. In fact he shows that if the joining in Corollary \ref{cor:WOT close} is ergodic then there is no need to take a linear combination. See also \cite[Theorem 7.1]{JRR}. There is an open question of whether this result is true for general rank 1 systems \cite[Page 382]{King r1}. Ryzhikov has a series of results in this direction, see for example \cite{ryzh1} and \cite{ryzh2}. 
%Note that these are a partial answer to a much harder question of King 
%who asked if for rank 1 transformations shifted power joinings were dense in ergodic joinings \cite[Page 382]{King r1}.
\subsection{Proof of Theorem \ref{theorem:SOT close}}
\begin{lemma}
For each $0\leq j<n_k$ we have 
\begin{equation}
\label{eq:lemma1:4}
n_k\int_{T^jI_k}\sigma_x (\cR_k^c)d \lambda(x)\leq \lambda (\tilde{\cR}_k^c).
\end{equation}
\end{lemma}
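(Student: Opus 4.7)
The plan is to exploit the $(T\times T)$-invariance of the joining $\sigma$ to translate the level $T^j I_k$ through every level of the Rokhlin tower $\cR_k$, and then use the defining property of $\tilde{\cR}_k$ to dominate the shifted complements $T^{m-j}\cR_k^c$ by the single set $\tilde{\cR}_k^c$.

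First I would rewrite the integral in terms of the joining, using the definition of the disintegration:
$$\int_{T^j I_k}\sigma_x(\cR_k^c)\,d\lambda(x)=\sigma(T^j I_k\times\cR_k^c).$$
Because $(T\times T)_*\sigma=\sigma$, for every integer $m$ with $0\le m<n_k$ the invariance gives
$$\sigma(T^j I_k\times\cR_k^c)=\sigma(T^m I_k\times T^{m-j}\cR_k^c),$$
since applying $T\times T$ the integer number of times $m-j$ transports the first rectangle onto the second. Summing this identity over $m=0,1,\dots,n_k-1$ produces
$$n_k\,\sigma(T^j I_k\times\cR_k^c)=\sum_{m=0}^{n_k-1}\sigma(T^m I_k\times T^{m-j}\cR_k^c).$$

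Next I would invoke (\ref{eq:property:tildeRk}) to bound each summand. Since $0\le m,j<n_k$, we have $|m-j|\le n_k-1<2n_k$, so for every $x\in\tilde{\cR}_k$ the point $T^{j-m}x$ lies in $\cR_k$; equivalently $T^{j-m}\tilde{\cR}_k\subseteq\cR_k$, and taking complements followed by $T^{m-j}$ yields $T^{m-j}\cR_k^c\subseteq\tilde{\cR}_k^c$. Therefore
$$\sigma(T^m I_k\times T^{m-j}\cR_k^c)\le\sigma(T^m I_k\times\tilde{\cR}_k^c).$$
The levels $T^m I_k$ for $0\le m<n_k$ are pairwise disjoint with union $\cR_k$, so summing gives
$$n_k\,\sigma(T^j I_k\times\cR_k^c)\le\sigma(\cR_k\times\tilde{\cR}_k^c)\le\sigma([0,1]\times\tilde{\cR}_k^c)=\lambda(\tilde{\cR}_k^c),$$
where the final equality uses that $\lambda$ is the second marginal of $\sigma$. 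This is exactly (\ref{eq:lemma1:4}).

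The only conceptual step is getting the direction of the inclusion right: $\tilde{\cR}_k$ is the \emph{smaller} tower whose points have long forward and backward orbits inside $\cR_k$, so shifting $\cR_k^c$ by any $T^n$ with $|n|<2n_k$ enlarges it into a subset of $\tilde{\cR}_k^c$ rather than shrinking it into $\cR_k^c$. Once that is clear, everything else is bookkeeping with $(T\times T)$-invariance and disjointness of tower levels; I do not expect any genuine obstacle.
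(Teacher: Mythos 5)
Your proof is correct and is essentially the same argument as the paper's: both use $(T\times T)$-invariance to transport the level $T^jI_k$ through all $n_k$ levels of the tower, the inclusion $T^{i}\cR_k^c\subset\tilde{\cR}_k^c$ for $|i|<n_k$, disjointness of the levels, and the marginal condition on $\sigma$. The paper phrases it via disintegrations and a minimum-is-at-most-the-average step, while you sum the invariance identity directly at the level of the joining measure, but these are only cosmetic differences.
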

\bold{Remark.} Note that $n_k$ is roughly $\lambda(T^jI_k)^{-1}$. 
\begin{proof}
Suppose $0 \leq j < n_k$, and suppose $x \in T^j I_k$. 
From (\ref{eq:property:tildeRk}) we have
$T^{i}\cR_k^c \subset \tilde{\cR}_k^c$ for all $-n_k<i<n_k$. 
We claim that
\begin{equation}
\label{eq:sigmax:Rkc}
\sigma_x (\cR_k^c)\leq \sigma_{T^{\ell} x}(\tilde{\cR}_k^c) \qquad \text{for all $-n_k < \ell<n_k$.} 
\end{equation}
Indeed,
$\sigma_{x}(\cR^c_k)=\sigma_{T^\ell x}(T^{\ell}\cR^c_k)\leq
\sigma_{T^\ell x}(\tilde{\cR}_k^c)$, proving (\ref{eq:sigmax:Rkc}). 
Integrating (\ref{eq:sigmax:Rkc}) we get
\begin{equation}
\label{eq:int:TjIk:sigma:y}
\int_{T^jI_k}\sigma_y(\cR_k^c)d\lambda(y)\leq
\int_{T^{j+\ell}I_k}\sigma_z(\tilde{\cR}_k^c) d\lambda(z) \quad\text{for all
$-n_k <\ell<n_k$.}
\end{equation}
Since we can choose $\ell$ in (\ref{eq:int:TjIk:sigma:y}) so that
$j+\ell$ takes any value in $[0,n_k-1]\cap \zed$, we get
\begin{equation}
\label{eq:int:Tj:Ik:min}
\int_{T^jI_k}\sigma_y(\cR_k^c)d\lambda(y)\leq \min_{0 \le i < n_k}
\int_{T^i I_k}\sigma_z(\tilde{\cR}_k^c) d\lambda(z).
\end{equation}
Now
$$\sum_{i=0}^{n_k-1}\int_{T^iI_k}\sigma_y(\tilde{\cR}_k^c)d\lambda(y)\leq
\int_{[0,1]}\sigma_y(\tilde{\cR}_k^c)d\lambda(y)\leq
\lambda(\tilde{\cR}_k^c),$$ 
where the last estimate
uses that $\sigma$ has projections $\lambda$. So we obtain 
\begin{equation}
\label{eq:min:small}
\min\limits_{0\leq i<n_k} \int_{T^iI_k}\sigma_x
(\tilde{\cR}_k^c)d\lambda(x)\leq  \frac 1 {n_k} \lambda(\tilde{\cR}_k^c).
\end{equation}
Now the estimate (\ref{eq:lemma1:4}) follows from
(\ref{eq:int:Tj:Ik:min}) and (\ref{eq:min:small}). 
\end{proof} 

We want to guess coefficients $c_j$ so that $\sigma$ is close to $\sum_{j=0}^{n_k-1}c_j (id\times T^i)_*\mu$. The next lemma comes up with a candidate pointwise version. Theorem \ref{theorem:SOT close} and Corollary \ref{cor:WOT close} follow because by Egoroff's theorem this choice is almost constant on most of the $T^\ell I_k$ 
and the lemma after this (Lemma~\ref{lemma:other indices}), which shows that they are almost $T$ invariant. 
\begin{lemma}\label{lemma:A close} Let $x \in \hat{\cR}_k \cap T^jI_k$ where $0\leq j<n_k$.  Define $c_i(x)=\sigma_x(T^{a}I_k\cap \hat{\cR}_k)$ where $0\leq a<n_k$ and $i+j \equiv a \,( \text{mod }n_k)$.
 For all 1-Lipschitz $f$ we have 
 $$\left|A_\sigma f(x)-\sum_{i=0}^{n_k-1}c_i(x)f(T^ix)\right|\leq \diam(I_k)+2\|f\|_{\sup}\sigma_x(\hat{\cR}_k^c).$$
\end{lemma}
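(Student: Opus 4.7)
The plan is to directly unpack $A_\sigma f(x)=\int f\,d\sigma_x$ and to replace $f$ on each rung of the Rokhlin tower $\hat{\cR}_k$ by its value at a single anchor point of that rung. Using that $\hat{\cR}_k$ is a height-$n_k$ Rokhlin tower over $I_k\cap T^{-n_k}I_k\cap T^{n_k}I_k$, decompose
\begin{displaymath}
A_\sigma f(x)=\sum_{a=0}^{n_k-1}\int_{T^aI_k\cap \hat{\cR}_k}f\,d\sigma_x+\int_{\hat{\cR}_k^c}f\,d\sigma_x,
\end{displaymath}
so the contribution from $\hat{\cR}_k^c$ is bounded by $\|f\|_{\sup}\sigma_x(\hat{\cR}_k^c)$ immediately.

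The heart of the proof is the combinatorial check that the anchor point $T^ix$ does indeed lie on the correct rung $T^aI_k$. Given $x\in \hat{\cR}_k\cap T^jI_k$, the definition of $\hat{\cR}_k$ forces $T^{-j}x\in I_k$ and $T^{n_k-j}x\in I_k$. Setting $a=(i+j)\bmod n_k$ for $0\leq i<n_k$, one checks two cases: if $i+j<n_k$ then $T^ix=T^{i+j}(T^{-j}x)\in T^aI_k$, and if $i+j\geq n_k$ then $T^ix=T^{i+j-n_k}(T^{n_k-j}x)\in T^aI_k$. So the assignment $i\mapsto a$ is a bijection of $\{0,\ldots,n_k-1\}$ matching the definition of $c_i(x)=\sigma_x(T^aI_k\cap \hat{\cR}_k)$, and the anchor $T^ix$ actually sits in the rung whose $\sigma_x$-mass is $c_i(x)$.

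Since $f$ is $1$-Lipschitz and $T^aI_k$ is an interval of diameter $\diam(I_k)$, replacing $f$ by its value at $T^ix$ on each piece gives
\begin{displaymath}
\left|\int_{T^aI_k\cap \hat{\cR}_k}f\,d\sigma_x-c_i(x)f(T^ix)\right|\leq \diam(I_k)\,\sigma_x(T^aI_k\cap \hat{\cR}_k),
\end{displaymath}
and summing over $a$ gives total error at most $\diam(I_k)\,\sigma_x(\hat{\cR}_k)\leq \diam(I_k)$. Adding the $\hat{\cR}_k^c$ tail estimate produces the stated bound, with the coefficient $2$ in front of $\|f\|_{\sup}$ comfortably absorbing the one-sided $\|f\|_{\sup}\sigma_x(\hat{\cR}_k^c)$ that this bookkeeping actually needs. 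The only real obstacle is the cyclic indexing that identifies the rung containing $T^ix$; once that is settled, the Lipschitz estimate and the tower partition mechanically deliver the conclusion.
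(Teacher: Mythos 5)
Your proof is correct, and its skeleton---partition $\sigma_x$ over the rungs $T^aI_k\cap\hat{\cR}_k$ of the tower, replace $f$ on each rung by its value at the anchor $T^ix$ using the Lipschitz bound together with $\diam(T^aI_k)=\diam(I_k)$, and bound the $\hat{\cR}_k^c$ tail by $\|f\|_{\sup}\sigma_x(\hat{\cR}_k^c)$---is the same as the paper's. Where you genuinely diverge is in the wrap-around case $i+j\ge n_k$. The paper works with the ``unwrapped'' rung $T^{i+j}I_k$ and must then compare its mass with $c_i(x)=\sigma_x(T^{i+j-n_k}I_k\cap\hat{\cR}_k)$, which it does via the approximate $n_k$-periodicity $T^{n_k}I_k\approx I_k$ encoded in $\tilde{\cR}_k$; this is the source of the extra $\tilde{\cR}_k^c$ error terms in its displays (\ref{eq:Ac 2})--(\ref{eq:Ac 4}) and, in effect, of the factor $2$ in the statement. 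You instead observe that $x\in\hat{\cR}_k\cap T^jI_k$ forces $T^{-j}x\in I_k\cap T^{-n_k}I_k$, hence $T^{n_k-j}x\in I_k$, so the anchor $T^ix$ already lies \emph{exactly} in the wrapped rung $T^{(i+j)\bmod n_k}I_k$ whose mass defines $c_i(x)$; no comparison of rungs is needed, the cyclic bookkeeping is exact, and you land on the sharper bound $\diam(I_k)+\|f\|_{\sup}\sigma_x(\hat{\cR}_k^c)$, of which the stated inequality is a weakening. This is the cleaner route: it uses only the definition of $\hat{\cR}_k$ (never $\tilde{\cR}_k$), and it avoids the $\lambda$-versus-$\sigma_x$ bookkeeping in the paper's own displays that makes them hard to reconcile with the bound as stated.
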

 Morally $c_j(x)$ is the $\sigma_x$ measure of the level in $R_k$ that is $j$ levels above the level $x$ is on. Because $j+\ell$ can be bigger than $n_k$ the definition is slightly more complicated.  Note that the $c_j(x)$ are non-negative. 
\begin{proof}
Suppose $x \in \hat{\cR}_k \cap T^jI_k$.
First notice that if $y,z \in T^iI_k$ for some $0\leq i<n_k$ we have that $d(y,z)<\diam(I_k)$. So if $j+\ell< n_k$ we have
\begin{equation}
\label{eq:Ac 1}
\left|\int_{\hat{R}_k \cap T^{j+\ell}I_k} f d\sigma_x-
c_j(x)f(T^ix)\right|\leq \|f\|_{Lip}\, \diam(I_k).
\end{equation}
If $j+\ell\geq n_k$ then $|c_j(x)-\lambda(\hat{R}_k\cap
T^{j+\ell}I_{k})|\leq \lambda(\tilde{R}^c_k \cap T^{j +\ell}I_k)$ because if $y \in \tilde{R}_k$ then $T^{\pm n_k}y \in \hat{R}_k$. 
So for any $j$ we have 
\begin{equation}
\label{eq:Ac 2}
\left|\int_{\hat{R}_k \cap T^{j+\ell}I_k} f
d\sigma_x-c_j(x)f(T^jx)\right|\leq \|f\|_{Lip}\,
\diam(I_k)+\|f\|_{\sup}\, \lambda(\tilde{R}_k^c \cap T^{j+\ell}I_k).
\end{equation}
By (\ref{eq:property:hatRk}), for all $0 \leq \ell < n_k$, $T^{-\ell}
\hat{R}_k \subset R_k$. Therefore, $\hat{R}_k \subset
\bigcup_{i=\ell}^{\ell+n_k-1}T^iI_k$ for all $0\leq \ell<n_k$. By
summing over the $j$ in (\ref{eq:Ac 2}) we obtain 
\begin{equation}
\label{eq:Ac 3}
\left|\int_{\hat{R}_k}fd
\sigma_x-\sum_{j=0}^{n_k-1}c_i(x)f(T^ix)\right|\leq \|f\|_{Lip}\, \diam(I_k)+\|f\|_{\sup}\lambda(\tilde{R}_k^c). 
\end{equation}
In view of the fact that 
\begin{equation}
\label{eq:Ac 4}
\left|\int_{\hat{R}_k^c} f\, d\sigma_x\right|\leq \|f\|_{\sup}\, \lambda(\tilde{R}^c_k), 
\end{equation}
we obtain the lemma.
\end{proof}
\begin{lemma}
\label{lemma:other indices}
Suppose $0 \le \ell < n_k$. 
If $x\in T^\ell I_k$ and $-\ell\leq i<n_k-\ell$ then 
$$\sum_{j=0}^{n_k-1}|c_j(x)-c_j(T^ix)|\leq 2\sigma_x(\tilde{R}_k^c).$$
\end{lemma}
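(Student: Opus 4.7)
My plan is to introduce a ``relative level'' function on the tower and exploit $T \times T$-invariance combined with the almost-periodic behavior of $T$ on $\tilde{\cR}_k$. For $u \in \cR_k \cap T^\ell I_k$ with $0 \le \ell < n_k$ and any $y \in \cR_k \cap T^a I_k$, define $r_u(y) := (a - \ell) \bmod n_k \in \{0,\dots,n_k-1\}$. Then the definition from Lemma~\ref{lemma:A close} reads $c_p(u) = \sigma_u(r_u^{-1}(p) \cap \hat{\cR}_k)$. Using that $T \times T$-invariance of $\sigma$ gives $\sigma_{T^i x}(A) = \sigma_x(T^{-i}A)$, one rewrites
\begin{equation*}
c_p(T^i x) = \sigma_x\bigl(\{y : T^i y \in \hat{\cR}_k \text{ and } r_{T^i x}(T^i y) = p\}\bigr).
\end{equation*}

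The heart of the proof is the claim that for every $y \in \tilde{\cR}_k$, we have $T^i y \in \hat{\cR}_k$ and $r_{T^i x}(T^i y) = r_x(y)$. The hypotheses $0 \le \ell < n_k$ and $-\ell \le i < n_k - \ell$ force $|i| < n_k$ and $\ell + i \in [0,n_k-1]$. The first assertion then follows from (\ref{eq:property:tildeRk}) and (\ref{eq:property:hatRk}). For the second, write $y = T^m z$ with $z \in V := \bigcap_{|j|\le 2} T^{jn_k} I_k$ and $0 \le m < n_k$; using $z, T^{\pm n_k} z \in I_k$, one reads off in each of the three cases $m + i \in [0,n_k)$, $[n_k,2n_k)$, $(-n_k, 0)$ that $T^i y = T^{m+i} z$ lies in the tower level $T^{(m+i) \bmod n_k} I_k$, while $T^i x \in T^{\ell + i} I_k$ directly. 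A short mod-$n_k$ calculation then yields $r_{T^i x}(T^i y) = (m - \ell) \bmod n_k = r_x(y)$.

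Granted the claim, the sets $r_x^{-1}(p) \cap \tilde{\cR}_k$ and $\{y \in \tilde{\cR}_k : T^i y \in \hat{\cR}_k,\ r_{T^i x}(T^i y) = p\}$ coincide (using also $\tilde{\cR}_k \subset \hat{\cR}_k$), so the $\tilde{\cR}_k$-parts of $c_p(x)$ and $c_p(T^i x)$ cancel in their difference, leaving
\begin{equation*}
|c_p(x) - c_p(T^i x)| \le \sigma_x\bigl(r_x^{-1}(p) \cap \hat{\cR}_k \cap \tilde{\cR}_k^c\bigr) + \sigma_x\bigl(T^{-i}(r_{T^i x}^{-1}(p) \cap \hat{\cR}_k) \cap \tilde{\cR}_k^c\bigr).
\end{equation*}
Summing over $p$, the families $\{r_x^{-1}(p) \cap \hat{\cR}_k\}_p$ and $\{T^{-i}(r_{T^i x}^{-1}(p) \cap \hat{\cR}_k)\}_p$ partition $\hat{\cR}_k$ and $T^{-i}\hat{\cR}_k$ respectively, so the two sums are bounded by $\sigma_x(\hat{\cR}_k \cap \tilde{\cR}_k^c)$ and $\sigma_x(T^{-i}\hat{\cR}_k \cap \tilde{\cR}_k^c)$, each $\le \sigma_x(\tilde{\cR}_k^c)$. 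Adding yields the bound $2\sigma_x(\tilde{\cR}_k^c)$.

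The main obstacle is the level-preservation half of the key claim, where one has to carefully track the tower wrap-around on $\tilde{\cR}_k$ in the three sub-cases for the sign of $m + i$. Once that step is done, the rest is just disjointness of tower levels.
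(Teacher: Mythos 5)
Your proof is correct and follows essentially the same route as the paper's: both exploit $T\times T$-invariance to pull $\sigma_{T^ix}$ back to $\sigma_x$, use $\tilde{\cR}_k$ to control the wrap-around of tower levels under $T^i$, and sum over the disjoint levels to pick up $\sigma_x(\tilde{\cR}_k^c)$ twice. Your ``relative level'' function merely repackages the paper's four-way case analysis on $j+\ell$ and $i+j+\ell$ versus $n_k$ into a single uniform claim, which is a clean way to organize the same argument.
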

\begin{proof}
Suppose $0 \le \ell < n_k$, $0 \leq j < n_k$, and $-\ell \le i <
n_k-\ell$.  
First note that  if $0 \le m < n_k$ and $z \in T^m I_k \cap \hat{R}_k$ then by
(\ref{eq:def:hatRk}), we have $T^s z \in T^{m+s}I_k \cap
\hat{R}_k$ for all $-m\leq s<n_k-m$. Thus, if $j + \ell <
n_k$ and $i+j+\ell < n_k$, we have
$$\sigma_{T^ix}(T^{i+j+\ell}I_k \cap \hat{R}_k)=\sigma_{x}(T^{j+\ell}I_k \cap T^{-i}\hat{R}_k)=\sigma_x(T^{j+\ell}I_k \cap \hat{R}_k).$$
This gives $c_j(x)=c_j(T^ix)$ if $j+\ell <n_k$ and $i+j+\ell<n_k$. 
By similar reasoning we have that $c_j(x)=c_j(T^ix)$ if $j+\ell\ge n_k$ and $i+j+\ell\geq n_k$. 

Now lets assume that $j+\ell<n_k$ and $i+j+\ell\geq n_k$. Then,
\begin{equation}
\label{eq:cjTix}
c_{j}(T^ix)=\sigma_{T^ix}(T^{i+j+\ell-n_k}I_k \cap
\hat{R}_k)=\sigma_x(T^{j+\ell-n_k}I_k\cap T^{-i}\hat{R}_k).
\end{equation}
Also,
\begin{equation}
\label{eq:cjx}
c_j(x)=\sigma_x(T^{j+\ell}I_k \cap \hat{R}_k).
\end{equation}
Now because $\tilde{R}_k\subset \bigcap_{i=-{n_k}}^{n_k}T^i\hat{R}_k $, if $z \in T^{i+j+\ell-n_k} I_k \cap \tilde{R}_k$,  then,
$z \in T^{j+\ell-n_k}I_k\cap T^{-i}\hat{R}_k$, and $z \in
T^{j+\ell}I_k \cap \hat{R}_k$. Therefore, the symmetric difference
between $T^{j+\ell-n_k}I_k\cap T^{-i}\hat{R}_k$ and $T^{j+\ell}I_k
\cap \hat{R}_k$ is contained in the union of $T^{i+j+\ell-n_k} I_k \cap
\tilde{R}_k^c$ and $T^{j+\ell} I_k \cap \tilde{R}_k^c$. 
Thus, in view of (\ref{eq:cjTix}), and (\ref{eq:cjx}), 
\begin{displaymath}
|c_j(x)-c_j(T^ix)|\leq
\sigma_x(T^{j+\ell+i-n_k}\tilde{R}_k^c)+
\sigma_x(T^{j+\ell}\tilde{R}_k^c).
\end{displaymath}
 The last case, where $j+\ell \geq n_k$ and $0\leq i+j+\ell<n_k$ gives analogous bounds. So we bound $\sum_{i=0}^{n_k-1} |c_j(x)-c_j(T^ix)|$ by $2\sum_{i=0}^{n_k-1}\lambda(T^iI_k\cap \tilde{R}_k^c)\leq 2\lambda( \tilde{R}_k^c)$ and obtain the lemma.
\end{proof}

Let $d_{KR}$ denote the Kantorovich-Rubinstein metric on
measures. That is
\begin{displaymath}
d_{KR}(\mu,\nu)=\sup \left\{\left|\int fd\mu-\int f d\nu\right|:f \text{ is 1-Lipschitz}\right\}.
\end{displaymath}
The next lemma is an immediate consequence of this definition.
\begin{lemma}\label{lemma:kr est}If $f$ is 1-Lipshitz and
$d_{KR}(\sigma_x,\sigma_y)<\epsilon$ then $|A_\sigma f(x)-A_\sigma
f(y)|<\epsilon$.
\end{lemma}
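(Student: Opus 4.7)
The plan is to unwind both definitions and observe that the conclusion is essentially immediate. Writing out $A_\sigma f(x) - A_\sigma f(y) = \int f\, d\sigma_x - \int f\, d\sigma_y$, I would simply apply the definition of the Kantorovich--Rubinstein metric to this difference. Since the defining supremum in $d_{KR}(\sigma_x,\sigma_y)$ is taken over all $1$-Lipschitz test functions, and $f$ is one such function by hypothesis, the quantity $\left|\int f\, d\sigma_x - \int f\, d\sigma_y\right|$ is automatically at most $d_{KR}(\sigma_x,\sigma_y)$, which is less than $\epsilon$.

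More concretely, the steps I would carry out are: first, expand $A_\sigma f(x)$ and $A_\sigma f(y)$ using the definition $A_\sigma f(\cdot) = \int f\, d\sigma_{(\cdot)}$; second, take the absolute value of the difference; third, invoke the $1$-Lipschitz assumption on $f$ so that $f$ lies in the admissible class for the supremum defining $d_{KR}$; fourth, conclude via the hypothesis $d_{KR}(\sigma_x,\sigma_y)<\epsilon$.

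Since this is genuinely just a rewriting of the Kantorovich--Rubinstein definition in the particular notation of the paper, I expect no obstacle at all. The only mild subtlety worth noting is that the $\sigma_x,\sigma_y$ are disintegration measures, which are only defined for $\lambda$-a.e.\ $x,y$; so strictly speaking the conclusion should be read as holding for $\lambda$-a.e.\ pair $(x,y)$ for which the disintegration makes sense, but this is already built into the preceding setup and requires no separate argument.
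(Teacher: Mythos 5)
Your proposal is correct and matches the paper exactly: the paper offers no separate proof, stating only that the lemma is ``an immediate consequence of this definition,'' which is precisely your observation that $|A_\sigma f(x)-A_\sigma f(y)|=\left|\int f\,d\sigma_x-\int f\,d\sigma_y\right|\leq d_{KR}(\sigma_x,\sigma_y)<\epsilon$ since $f$ is an admissible test function in the supremum defining $d_{KR}$. Your remark about the disintegration only being defined $\lambda$-a.e.\ is a reasonable caveat, consistent with how the lemma is used later in the paper.
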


We say $0 \le j<n_k$ is \emph{k-good} if there exists $y_j$ in $T^jI_k$ so
that at least $1-\epsilon$ proportion of the points in $T^jI_k$ have their
disintegration $\epsilon$ close to $y_j$. That is
\begin{displaymath}
\lambda(\{x \in T^jI_k \st
  d_{KR}(\sigma_x,\sigma_{y_j})<\epsilon\}) \ge (1-\epsilon) \lambda(I_k).
\end{displaymath}

\begin{lemma}
\label{lemma:most:good}
For all $\epsilon>0$ there exists
$k_0$ so that for all $k>k_0$ we have
$$|\{0 \le j < n_k : j \text{ is k-good }\}|>(1-\epsilon)n_k.$$
\end{lemma}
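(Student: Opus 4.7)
The plan is to apply Lusin's theorem to the measurable map $\Phi\colon [0,1] \to (\mathrm{Prob}([0,1]), d_{KR})$ defined by $\Phi(x) = \sigma_x$. Since $(\mathrm{Prob}([0,1]), d_{KR})$ is a compact Polish space, Lusin will produce, for each $\eta > 0$, a compact set $E \subset [0,1]$ with $\lambda(E^c) < \eta$ on which $\Phi$ is continuous, hence uniformly continuous: there is a modulus $\delta > 0$ such that $x, y \in E$ with $|x-y| < \delta$ forces $d_{KR}(\sigma_x, \sigma_y) < \epsilon$. The second ingredient is that $\diam(I_k) \to 0$: since the $n_k$ disjoint intervals $T^i I_k$ all have length $\lambda(I_k) = \lambda(\cR_k)/n_k$ and fill measure tending to $1$, this reduces to $n_k \to \infty$, which holds for the transformations of interest (for weakly mixing $T$ a bounded-$n_k$ subsequence together with condition (iv) of the definition would produce a nontrivial cyclic $T^n$-invariant partition, incompatible with weak mixing).

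Given $\epsilon > 0$, set $\eta = \epsilon^2/2$, fix $E$ and $\delta$ accordingly, and choose $k_0$ so that for $k > k_0$ both $\diam(I_k) < \delta$ and $\lambda(\cR_k) > 1/2$. Call a level $\ell$ \emph{bad} if $\lambda(T^\ell I_k \cap E^c) > \epsilon \lambda(I_k)$. Disjointness of the $T^\ell I_k$ gives
\[
\sum_{\ell=0}^{n_k-1} \lambda(T^\ell I_k \cap E^c) \le \lambda(E^c) < \eta,
\]
so the number of bad levels is less than $\eta/(\epsilon\lambda(I_k)) = \eta n_k/(\epsilon \lambda(\cR_k)) \le \epsilon n_k$. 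For any non-bad $\ell$ one has $\lambda(T^\ell I_k \cap E) \ge (1-\epsilon)\lambda(I_k) > 0$; pick any $y_\ell$ in this intersection. For every $x \in T^\ell I_k \cap E$, both $x$ and $y_\ell$ lie in $E$ with $|x - y_\ell| \le \diam(I_k) < \delta$, hence $d_{KR}(\sigma_x, \sigma_{y_\ell}) < \epsilon$. Therefore $\lambda\{x \in T^\ell I_k : d_{KR}(\sigma_x, \sigma_{y_\ell}) < \epsilon\} \ge (1-\epsilon)\lambda(I_k)$, i.e.\ $\ell$ is $k$-good.

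The main obstacle is the soft analytic step of applying Lusin to a $\mathrm{Prob}([0,1])$-valued map. A cleaner workaround (matching the authors' mention of Egoroff in the outline) is to fix a finite $\epsilon$-net $\{f_1,\dots,f_M\}$ of 1-Lipschitz test functions on $[0,1]$ (such a net exists because 1-Lipschitz functions with $f(0)=0$ form a compact subset of $C([0,1])$ by Arzel\`a--Ascoli), apply scalar Lusin or Egoroff to each $L^\infty$ function $A_\sigma f_m$ to obtain a single compact set $E$ on which all of them are simultaneously uniformly continuous, and convert $d_{KR}$-closeness into the finitely many $L^\infty$-closeness estimates through the $\epsilon$-net. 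With measurability handled, the rest is the elementary counting above; notably neither the rigidity condition (iv) of the definition nor Lemmas~\ref{lemma:A close}--\ref{lemma:other indices} enter the argument, consistent with the authors' description of Lemma~\ref{lemma:most:good} as an ``Egoroff step'' logically parallel to (rather than downstream of) the $T$-invariance captured by Lemma~\ref{lemma:other indices}.
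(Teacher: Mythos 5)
Your proof is correct and is essentially the paper's own argument: Lusin's theorem applied to the disintegration map $x \mapsto \sigma_x$, uniform continuity on the resulting compact set, choosing $k$ with $\diam(I_k)<\delta$, and a Chebyshev count of the levels that meet the exceptional set in more than $\epsilon\lambda(I_k)$ of their measure. Your added remarks --- the justification that $n_k\to\infty$ so that $\diam(I_k)\to 0$, and the $\epsilon$-net workaround for measure-valued Lusin --- are sound but do not change the substance.
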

\begin{proof}
By Lusin's Theorem there exists a compact set $K$ of measure at least
$1-\frac {\epsilon^2} 4$ so that the map $y \to \sigma_y$ is
continuous with respect to the usual metric on $[0,1]$ and the metric
$d_{KR}$ on measures. Because $K$ is compact this map is uniformly
continuous and so there exists $\delta>0$ so that  $x,y \in K$ and
$|x-y|<\delta$ then $d_{KR}(\sigma_x,\sigma_y)<\epsilon$. We choose
$k$ so that $\diam(I_k)<\delta$ and $\lambda([0,1] \setminus
\cR_k)<\frac {\epsilon^2} 4$.  
Let
\begin{displaymath}
\eta = \frac{1}{n_k}|\{0\leq j<n_k\st \lambda(T^j I_k\cap K^c)> \epsilon
\lambda(I_k)\}|. 
\end{displaymath}
Then, because the $T^j I_k$ are disjoint and of equal size and
$\bigcup_{j=0}^{n_k-1} T^j I_k = \cR_k$, it is clear that  
\begin{displaymath}
\eta \epsilon \le \frac{\lambda(K^c \cap\cR_k)}{\lambda(\cR_k)} \le \frac{\frac{\epsilon^2}4}{1-\frac {\epsilon^2}4}<\frac{\epsilon^2}2,
\end{displaymath}
and thus $\eta < \epsilon/2$. This completes the proof of the lemma.
\end{proof}

\bold{Notation.}
If $j$ is $k$-good let 
$$G_j=\{x \in T^jI_k: \lambda(\{y \in
T^jI_k:d_{KR}(\sigma_x,\sigma_y)<2\epsilon\})>(1-\epsilon)\lambda(I_k)\},$$
i.e.\ $G_j$ is the set of points
that are almost continuity points of the map $x \to \sigma_x$
(restricted to $T^jI_k$). We set $G_j=\emptyset$ if $j$ is not $k$-good. 

\begin{lemma}
\label{lemma:our point}
For all $\epsilon>0$ there exists $k_1$ so that for all $k>k_1$ there
exists $0\leq \ell<n_k$ and $y_k\in T^\ell I_k\cap \hat{\cR}_k$ so
that $\sigma_{y_k}(\tilde{\cR}_k^c)<\epsilon$ and
\begin{equation}
\label{eq:good:yk}
|\{-\ell\leq j<n_k-\ell:T^j y_k \in G_{\ell+j} \text{ and }j \text{ is $k$-good}\}|>(1-12\epsilon)n_k. 
\end{equation}
\end{lemma}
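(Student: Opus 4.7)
The plan is to use the Rokhlin tower structure: any point in $\cR_k$ is uniquely written as $T^\ell x$ for some $x \in I_k$ and $\ell \in \{0,\ldots,n_k-1\}$, so finding $y_k$ amounts to finding a base point $x \in I_k$ and a level $\ell$ such that the full orbital column $\{x, Tx, \ldots, T^{n_k-1}x\}$ has most of its points in the ``good'' sets $G_m$ (for k-good $m$), while at least one point $T^\ell x$ of the column lies in $\hat\cR_k$ and has $\sigma_{T^\ell x}(\tilde\cR_k^c)<\epsilon$. The condition (\ref{eq:good:yk}) is really a condition on the column, since reindexing $m=\ell+j$ converts it into $|\{0\le m<n_k : T^m x \in G_{m} \text{ and } m \text{ is $k$-good}\}|>(1-12\epsilon)n_k$.

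First I would show that for every $k$-good $m$, the set $G_m$ has $\lambda(G_m)\ge(1-\epsilon)\lambda(I_k)$: by definition of $k$-good there is a $y_m$ such that most points in $T^m I_k$ have $\sigma$-disintegration within $\epsilon$ of $\sigma_{y_m}$; the triangle inequality promotes any such point to a point of $G_m$ (with parameter $2\epsilon$). Combined with Lemma~\ref{lemma:most:good} this gives
\begin{displaymath}
\int_{I_k}|S(x)|\,d\lambda(x)=\sum_{m \text{ k-good}}\lambda(G_m)\ge (1-\epsilon)^2 n_k \lambda(I_k),
\end{displaymath}
where $S(x):=\{0\le m<n_k: T^m x\in G_m \text{ and } m \text{ is $k$-good}\}$. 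By Markov's inequality applied to $n_k-|S(x)|$, the set $A_1=\{x\in I_k:|S(x)|>(1-12\epsilon)n_k\}$ has $\lambda(A_1)\ge \frac{5}{6}\lambda(I_k)$.

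Second, I would handle conditions (ii), (iii) via a second Fubini. Set $H=\hat\cR_k\cap\{z:\sigma_z(\tilde\cR_k^c)<\epsilon\}$. Since $\int \sigma_z(\tilde\cR_k^c)\,d\lambda(z)=\lambda(\tilde\cR_k^c)\to 0$ by Lemma~\ref{lemma:srank est}, Markov gives $\lambda(\{\sigma_z(\tilde\cR_k^c)\ge\epsilon\})\le \lambda(\tilde\cR_k^c)/\epsilon<\epsilon$ for $k$ large; also $\lambda(\hat\cR_k^c)<\epsilon$, so $\lambda(H^c)<2\epsilon$. Let $A_2=\{x\in I_k:\exists \ell, T^\ell x\in H\}$. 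Every point of $A_2^c$ has its entire orbital column outside $H$, so disjointness of the levels of $\cR_k$ gives $n_k\lambda(A_2^c)\le \lambda(H^c\cap \cR_k)\le 3\epsilon$, i.e.\ $\lambda(A_2^c)\le 3\epsilon\lambda(I_k)/(n_k\lambda(I_k))=O(\epsilon)\lambda(I_k)$.

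Finally, for $k$ large, $\lambda(A_1\cap A_2)\ge(\tfrac{5}{6}-O(\epsilon))\lambda(I_k)>0$, so I pick any $x\in A_1\cap A_2$ and choose $\ell$ so that $y_k:=T^\ell x\in H$. Then $y_k\in \hat\cR_k$, $\sigma_{y_k}(\tilde\cR_k^c)<\epsilon$, and the translation $m=\ell+j$ turns $|S(x)|>(1-12\epsilon)n_k$ into exactly (\ref{eq:good:yk}). The only delicate point is bookkeeping between the ``base-point'' picture (indexed by $m\in[0,n_k)$) and the ``centered-at-$y_k$'' picture (indexed by $j\in[-\ell,n_k-\ell)$); both the Fubini computations and the reindexing are straightforward, so no genuine obstacle arises.
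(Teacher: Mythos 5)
Your proof is correct and follows essentially the same route as the paper's: both rest on the same Fubini/Markov estimate over the Rokhlin tower (using $\lambda(G_j)\ge(1-\epsilon)\lambda(I_k)$ for $k$-good $j$ together with Lemma~\ref{lemma:most:good}) to show most columns are mostly good, then intersect with the almost-full-measure set where $y\in\hat{\cR}_k$ and $\sigma_y(\tilde{\cR}_k^c)<\epsilon$. The only cosmetic difference is that you run the averaging over base points of $I_k$ while the paper sums the bad sets level by level over the $T^\ell I_k$; these are the same computation.
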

\begin{proof} If $j$ is $k$-good then 
$$\lambda(G_j)>(1-\epsilon)\lambda(I_k).$$
Let $\cR_k^* = \bigcup_{j=0}^{n_k-1} G_j$. 
Notice that  $\underset{k\to \infty}{\lim} \, \lambda(\cup_{i=0}^{n_k-1}T^iI_k)=\underset{k\to\infty}{\lim}\, \lambda(\cR_k)=1$  and so for all large enough $k$ (so that $\lambda(\cR_k)$ is close to 1 and Lemma~\ref{lemma:most:good} holds) we have 
$$\lambda(\cR_k^*)\geq  (1-\epsilon)^2\lambda(\cR_k)>1-3\epsilon.$$
By a straightforward $L^1$ estimate, we have 
\begin{multline*}
\sum_{\ell=0}^{n_k-1}\lambda(\{y\in T^\ell I_k:|\{-\ell \leq
j<n_k-\ell:G_j = \emptyset \text{ or } T^jy\not\in G_{j+\ell}\}|\geq 12\epsilon n_k\} )<
\frac{ 3 \epsilon}{12}=\frac   \epsilon 4
%< 2\lambda(\cR_k^*)<6 \epsilon.
\end{multline*}
Therefore, the measure of the set of $y_k$ satisfying
(\ref{eq:good:yk}) (for some $\ell$) is at least $1/2$. 

Recalling that by 
Lemma~\ref{lemma:srank est} we have $\underset{k \to \infty}{\lim}\,
\lambda(\tilde{\cR}_k^c)= 0$ and so for $k$ large enough,
$$\lambda(\{y: \sigma_y(\tilde{\cR}_k)>\epsilon\})<\frac 1 3.$$
Thus, we can pick $y_k$ satisfying the conditions of the lemma. 
\end{proof}

\begin{proof}[Proof of Theorem \ref{theorem:SOT close}] 
For each $k$ large enough so that Lemmas \ref{lemma:most:good} and \ref{lemma:our point} hold and $\diam(I_k)<\epsilon$ and $\lambda(\cR_k^c)<\epsilon$, let  $y_{k}$ be as in the statement of Lemma~\ref{lemma:our point} and assume it is in $T^\ell I_k$ for some $0\leq \ell<n_k$.
\medskip

\noindent
\textit{Step 1:} We show that for all 1-Lipschitz functions $f$ with $\|f\|_{\sup}\leq 1$ we have 
$$\underset{k \to \infty}{\lim} \, \|A_\sigma f-\sum_{i=0}^{n_k-1}c_i(y_{k})U_T^if\|_2=0.$$
First, observe that by Lemma~\ref{lemma:A close} and the fact that $\|f\|_{\sup}\leq 1$, 
\begin{multline*}
|A_\sigma f(T^jy_k)-\sum_{i=0}^{n_k-1}c_i(T^jy_k)f(T^{i+j}y_k)|<
\diam(I_k)+2\sigma_{T^jy_k}(\hat{\cR}_k^c)\leq \\ \leq \diam(I_k)+2\sigma_{y_k}(\tilde{\cR}_k^c).
\end{multline*}
 By our assumptions that $\diam(I_k) < \epsilon$ and $\sigma_{y_k}(\tilde{\cR}_k^c)<\epsilon$ we have
 $$ |A_\sigma f(T^jy_k)-\sum_{i=0}^{n_k-1}c_i(T^jy_k)f(T^{i+j}y_k)|<3\epsilon.$$
  From Lemma~\ref{lemma:kr est} we 
have that if $x$ satisfies 
\begin{equation}\label{eq:kr close}d_{KR}(\sigma_x,\sigma_{T^jy_k})<\epsilon \end{equation}
 then 
$$|A_\sigma f(x)-\sum_{i=0}^{n_k-1}c_i(T^jy_k)f(T^{i+j}y_k)|<4\epsilon.$$
Let $V$ denote the set of $x$ satisfying (\ref{eq:kr close}) and such that
$x \in T^{\ell+j}I_k \cap \hat{\cR}_k$ for $-\ell \leq
j<n_k-\ell$. Then, for $x \in V$,  
$T^{i}x,T^{i+j}y_k \in T^{i+\ell+j\, (mod \, n_k)}I_k$ for all $0\leq i<n_k$ since $-n_k<i, \, i+j<n_k$ (by \eqref{eq:property:hatRk}). 
Thus for any $x \in V$,
$$|A_\sigma f(x)-\sum_{i=0}^{n_k-1}c_i(T^jy_k)f(T^ix)|<4\epsilon+\diam(I_k).$$

Recalling that by assumption $\diam(I_k)<\epsilon$ and invoking Lemma
\ref{lemma:other indices} we have 
  $$\int_{V}|A_\sigma f(x)-\sum_{j=0}^{n_k-1}c_j(y_k)f(T^jx)|^2d\lambda(x)\leq (5\epsilon+\sigma_y(\tilde{\cR}_k))^2<(6\epsilon)^2.$$
    Since $y_k$ satisfies the assumptions of Lemma~\ref{lemma:our point} and $\lambda(\tilde{\cR}_k^c)<\epsilon$  we have that 
\begin{equation}\label{eq:V big}\lambda(V^c)<2\epsilon n_k \lambda(I_k)+\epsilon.
\end{equation}
  Estimating trivially on $V^c$ we have
\begin{multline*}
\|A_\sigma f-\sum_{j=0}^{n_k-1}c_j(y_k)f\circ T^j\|_2^2=\int_0^1
  |A_\sigma f(x)-\sum_{j=0}^{n_k-1}c_j(y_k)f(T^jx)|^2 \, d\lambda(x)
  \leq \\ \leq
  (6\epsilon)^2+\|f\|_{\sup}^2\big((2\epsilon n_k)\lambda(I_k)+\epsilon \big).
\end{multline*}
Since $\|f\|_{\sup}\leq 1$ and $\epsilon$ is arbitrary this establishes Step 1.
\medskip

\noindent  
\textit{Step 2:} Completing the proof.

The idea of the proof is that by step 1 and linearity we have the limit on a dense set in $L^2$. Since the functions on $L^2$ we consider have operator norm uniformly bounded (by 1) they are an equicontinuous family and so convergence on a dense set implies convergence. 

To complete the formal proof of the theorem,  observe that for any $z$
we have $\sum c_i(z)=\sum |c_i(z)|\leq \sigma_z([0,1])$ and we may
assume that $\sigma_z([0,1])= 1$.\footnote{It is 1 for all but a
  measure zero
  set of $z$ and we may change the disintegration on this zero set.}
So
\begin{displaymath}
\left\|\sum_{i=0}^{n_k-1}c_i(y_k)U_T^i\right\|_{op}\leq 1 \qquad\text{for all $k$}.
\end{displaymath}
Therefore since we have shown $\underset{k \to \infty}{\lim} \,
\|A_\sigma f- \sum_{i=0}^{n_k-1}c_i(y_k)U_T^if\|_2 =0$ for a set of
$f$ with dense span in $L^2$ (that is 1-Lipschitz functions with
$\|f\|_{\sup}\leq 1$), we know that for all $f \in L^2 $  we have that
$\underset{k \to \infty}{\lim} \, \|A_\sigma f- \sum_{i=0}^{n_k-1}c_i(y_k)U_T^if\|_2 =0.$ This is the definition of strong operator convergence. 
\end{proof}

\begin{proof}[Proof of Corollary \ref{cor:WOT close}]
Let $\hat{\delta}_p$ denote the point mass at $p$.
By the proof of
the theorem that there exists $y_k$ so that
\begin{displaymath}
d_{KR}(\sigma_x,\sum_{j=0}^{n_k-1}c_j(y_k)\hat{\delta}_{(x,T^ix)})<5\epsilon
\end{displaymath}
for all $x \in V$.   By
(\ref{eq:V big}) we may assume $\lambda(V^c)$ is as small as we want. The corollary follows.
\end{proof}

\section{An abstract criterion}\label{sec:KSV}
Let $(S,Y,\lambda)$ be a uniquely ergodic topological dynamical
system. Let $\hat{\delta}_p$ denote a point mass at $p$.  Note we will
consider the metric $d_{KR}$ on the Borel probability measures  on
$Y\times Y$ (which is a weak-* closed set since $Y$ is compact) and
the measures $\hat{\delta}_p$ for $p \in Y \times Y$.  If
$\mu$ is a measure on $Y \times Y$, let $(\mu)_x$ be the
disintegration of $\mu$ along $\{x\}\times Y$.

Motivated by Corollary \ref{cor:WOT close} we wish to build \emph{ergodic} joinings that are close to finite linear combinations of shifted power joinings. For example we wish to have ergodic measures with $d_{KR}$ distance $\epsilon$ from the joining that gives measure $\frac 1 2$ to $\{(x,x)\}$ and measure $\frac 1 2 $ to $\{(x,Sx)\}$. Naively, one wants to find a sequence of shifted power joinings that spend half their time close to $\{(x,x)\}$ and half their time shadowing $\{(x,Sx)\}$. Taking a weak-* limit of these we wish to have a measure close to the joining that gives measure $\frac 1 2$ to $\{(x,x)\}$ and measure $\frac 1 2 $ to $\{(x,Sx)\}$.

  Our approach will be to do this inductively, to have a sequence of measure $\nu_i$ and $\mu_i$ so that $\nu_0$ is the shifted power joining supported on $\{(x,x)\}$ and $\mu_0$ is the joining supported on $\{(x,Sx)\}$. 
Inductively, $\mu_{i+1}$ spends a definite proportion of its time near $\mu_i$ and a definite proportion near $\nu_i$ and similarly for $\nu_{i+1}$. That is, we want to have sets $A_{i+1}$ and $B_{i+1}$ so that when $x \in A_{i+1}$ we have $(\nu_{i+1})_x$ is close to $(\mu_i)_x$ and $(\mu_{i+1})_x$ is close to $(\nu_i)_x$ and when $x \in B_{i+1}$ we have 
$(\nu_{i+1})_x$ is close to $(\nu_i)_x$ and $(\mu_{i+1})_x$ is close
to $(\mu_i)_x$. Clearly we want the union of $A_i$ and $B_i$ to have almost full measure and it is helpful that they each have measure at least $c>0$. This isn't quite good enough, in particular if $A_i$ and $B_i$ were constant sequences. We now make the next technical proposition to overcome these issues and additionally guarantee that limiting joining is ergodic.

Of course we want to consider the case of a linear combination of $d$ off diagonal joinings. That is, if we are given a finite number of shifted power joinings $\nu_0^{(1)},...,\nu_0^{(d)}$  we wish to approximate $\frac 1 d \sum_{i=1}^d\nu_0^{(i)}$. We do this analogously to the previous case. Indeed, we have $A_1,B_1$ and $\{\nu_1^{(i)}\}_{i=1}^d$ so $(\nu_1^{(i)})_x$ is close to $(\nu_0^{(i-1)})_x$ for $x \in A_1$ (where $i-1$ is interpreted as $d$ if $i=1$) and $(\nu_0^{(i)})_x$ for $x \in B_1$. We repeat this and obtain $\{\nu_2^{(i)}\}_{i=1}^d$, $A_2$ and $B_2$. Now $(\nu_2^{(i)})_x$ is close to $(\nu_0^{(i-2)})_x$ for $x \in A_1\cap A_2$. We continue repeating to approximate $\frac 1 d \sum_{i=1}^d\nu_0^{(i)}$.

Proposition~\ref{prop:KSV:joining} makes this precise.  Conditions \ref{cond:defin}-\ref{cond:small error} are basic setup, Condition \ref{cond:track} gives the inductive switching as above and Condition \ref{cond:kr close} lets us rule out a previously mentioned issue to show that the weak-* limit of the $\nu_i$ and $\mu_i$ is close to $\frac 12 (\mu_0+ \nu_0)$ and moreover that it is ergodic. 

%Let $d_{KR}$ denote the Kantorovich-Rubistein metric: 
%$$d_{KR}(\mu,\nu)=\sup\{|\int fd\mu-\int fd\nu|:f \text{ is 1-Lipschitz and  }\|f\|_{\sup}\leq 1\}.$$

Let $J_k$ be a sequence of intervals, $U_k$ be a sequence of measurable sets, $r_k$ be a sequence of natural numbers, $n_k^{(\ell)}$ be sequences of natural numbers for $\ell \in \{1,...,d\}$ and $\epsilon_j>0$ be a sequence of real numbers. 
Let $A_k=\bigcup_{i=1}^{r_k}S^i(J_k) \setminus U_k$ and $B_k=A_k^c
\setminus U_k$.  Let $\nu_{k}^{(\ell)}$ be the unique $S\times S$
invariant probability measure supported on $\{(x,S^{n_k^{(\ell)}}x)\}$. Note
that the system $(Y\cross Y,S\cross S,\nu_k^{(\ell)})$ is isomorphic
to $(S,Y,\lambda)$.  
Note that  $(\nu_j^{(\ell)})_x$, %the disintegration of $\nu_j^{(\ell)}$
 %along $\{x\} \times Y$, 
  is a point mass at $(x,S^{n_j^{(\ell)}}x)$. 
\begin{prop}
\label{prop:KSV:joining}
Assume 
\begin{enumerate}[label=(\alph*)]
\item\label{cond:defin} There exists $c>0$ so that for all $k$ we have
$\lambda(A_k) > c$ and $\lambda(B_k)>c$. 
\item \label{con:return}The minimal return time of $S$ to $J_k$ is at least $\frac 3 2 r_k.$
\item\label{cond:U small} $\lambda(U_k)<\epsilon_k$.
\item \label{cond:scale grow}$\underset{ k \to
  \infty}{\lim} r_k \sum_{i>k}\lambda(J_i)=0$. 
\item \label{cond:small error} $\epsilon_i$ are non-increasing and $\sum \epsilon_j<\infty$.
\end{enumerate}
If 
\begin{enumerate}[label=(\Alph*)]
\item\label{cond:track} For any $x \in A_k$ we have $d_{KR}((\nu_k^{(\ell)})_x,(\nu_{k-1}^{(\ell-1)})_x)<\epsilon_k$ and for any $x \in B_k$ we have $d_{KR}((\nu_k^{(\ell)})_x,(\nu_{k-1}^{(\ell)})_x)<\epsilon_k$. 

Note $\nu_{k-1}^{(\ell-1)}$ is interpreted to be $\nu_{k-1}^{(d)}$ if $\ell=1$. 
\item\label{cond:kr close}$d_{KR}( \frac 1 L \sum_{i=1}^{L}(S \times
S)^i (\nu_k^{(\ell)})_x,\nu_{k}^{(\ell)})<\epsilon_k$ for all $x \in X$, 
all $L\geq \frac{r_{k+1}}9$ and any $\ell \in \{1,...,d\}$. 
\footnote{Note that since $S \times S$ on $\{(x,S^{n_j^{(\ell)}}x)\}$ is uniquely ergodic, such an $r_{k+1}$ always exists \cite[Proposition 4.7.1]{BS}.} 
\end{enumerate}
Then the weak-* limit of any $\nu_k^{(\ell)} $ (as $k$ goes to infinity) is the same as the weak star limit of $ \frac 1 d \sum_{\ell=1}^d \nu_k^{(\ell)}$ as $k$ goes to infinity. In particular these limits exist. Call this measure $\mu$. It is ergodic and there exists $C$ so that $d_{KR}(\mu, \frac 1 d\sum_{\ell=1}^d \nu_{k}^{(\ell)})\leq C\sum_{j=k}^\infty \epsilon_j$. 
\end{prop}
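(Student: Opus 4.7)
My plan proceeds in three steps: first show $\mu_k := \tfrac{1}{d}\sum_\ell \nu_k^{(\ell)}$ is $d_{KR}$-Cauchy with the claimed rate, then show each $\nu_k^{(\ell)}$ converges to the same limit $\mu$, and finally establish ergodicity of $\mu$.

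For the first step, by (A), on $A_{k+1}$ (resp.\ $B_{k+1}$) the disintegration $(\nu_{k+1}^{(\ell)})_x$ is $\epsilon_{k+1}$-close in $d_{KR}$ to $(\nu_k^{(\ell-1)})_x$ (resp.\ $(\nu_k^{(\ell)})_x$). Averaging over the cyclic index $\ell$ absorbs the shift $\ell \mapsto \ell-1$ by reindexing the sum, so $(\mu_{k+1})_x$ is $\epsilon_{k+1}$-close to $(\mu_k)_x$ off $U_{k+1}$. Combined with (c), integration yields $d_{KR}(\mu_{k+1},\mu_k) \le C\epsilon_{k+1}$, and (e) makes $\mu_k$ Cauchy; it therefore converges weak-$*$ to some $\mu$ with $d_{KR}(\mu,\mu_k) \le C \sum_{j \ge k}\epsilon_j$.

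The heart of the argument is a ``shifting identity''
\begin{equation*}
  \nu_{k+1}^{(\ell)} \;\approx\; \lambda(A_{k+1})\,\nu_k^{(\ell-1)} + \lambda(B_{k+1})\,\nu_k^{(\ell)}
\end{equation*}
in $d_{KR}$ with error $O(\epsilon_k+\epsilon_{k+1})$. First use (A) to pass to $\nu_k^{(\ell-1)}|_{A_{k+1}} + \nu_k^{(\ell)}|_{B_{k+1}}$. Using (b) (disjointness of the tower levels $S^i J_{k+1}$) and the equivariance $(\nu_k^{(\ell-1)})_{S^i y} = (S\times S)^i (\nu_k^{(\ell-1)})_y$, rewrite
\begin{equation*}
(\nu_k^{(\ell-1)})|_{A_{k+1}} \;\approx\; r_{k+1}\int_{J_{k+1}} \Bigl[\tfrac{1}{r_{k+1}} \sum_{i=1}^{r_{k+1}} (S\times S)^i (\nu_k^{(\ell-1)})_y \Bigr]\,d\lambda(y).
\end{equation*}
Condition (B) with $L=r_{k+1}$ collapses the bracketed orbit average to $\nu_k^{(\ell-1)}$ up to $\epsilon_k$, giving $(\nu_k^{(\ell-1)})|_{A_{k+1}} \approx \lambda(A_{k+1})\,\nu_k^{(\ell-1)}$; the $B_{k+1}$ piece follows by writing $\nu_k^{(\ell)}|_{B_{k+1}} = \nu_k^{(\ell)} - \nu_k^{(\ell)}|_{A_{k+1}} - \nu_k^{(\ell)}|_{U_{k+1}}$ and reusing the same identity. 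Iterating the shifting identity $m$ times expresses $\nu_{k+m}^{(\ell)}$ as $\sum_{s=0}^{d-1} q^{(m)}_s \nu_k^{(\ell-s)}$ plus error, where the coefficients come from the generating function $\prod_{i=k+1}^{k+m}(\lambda(B_i) + \lambda(A_i)\,z)$ grouped modulo $d$. Fourier inversion on $\zed/d\zed$, together with (a) to bound $|\lambda(B_i)+\lambda(A_i)\omega|^2 \le 1 - 2c^2(1-\cos(2\pi/d)) =: \rho^2 < 1$ for every non-trivial $d$-th root of unity $\omega$, gives $|q^{(m)}_s - \tfrac{1}{d}| \le \rho^m$. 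Thus $d_{KR}(\nu_{k+m}^{(\ell)},\mu_k) \le C\bigl(\sum_{j=k+1}^{k+m}\epsilon_j + \rho^m\bigr)$, and a suitable choice $m=m(k)\to\infty$ shows $\nu_k^{(\ell)} \to \mu$ in $d_{KR}$. Condition (d) is used to absorb cross-scale errors in the iterated application.

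For the ergodicity of $\mu$ I would use the $L^2$ mean-ergodic criterion: for Lipschitz $\phi$ with $\int \phi\,d\mu = 0$, show $\|\tfrac{1}{N}\sum_{i=1}^N U_T^i \phi\|_{L^2(\mu)} \to 0$. The pointwise form of (B) gives, for any $x \in Y$ and $N \ge r_{k+1}/9$,
\begin{equation*}
\Bigl|\tfrac{1}{N}\sum_i \phi(T^i(x,S^{n_k^{(\ell)}}x)) - \int \phi\,d\nu_k^{(\ell)}\Bigr| \le \|\phi\|_{\mathrm{Lip}}\epsilon_k.
\end{equation*}
Squaring, integrating over $\nu_k^{(\ell)}$ and averaging over $\ell$ yields $\|\tfrac{1}{N}\sum U_T^i \phi\|_{L^2(\mu_k)}^2 \le \tfrac{2}{d}\sum_\ell \bigl(\int \phi\,d\nu_k^{(\ell)}\bigr)^2 + 2\|\phi\|_{\mathrm{Lip}}^2 \epsilon_k^2$, which tends to $0$ as $k \to \infty$ since $\int \phi\,d\nu_k^{(\ell)} \to \int \phi\,d\mu = 0$. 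Passing from $\mu_k$ to $\mu$ uses weak-$*$ convergence on the bounded continuous integrand for each fixed $N$, and coupling $N$ and $k$ via $r_{k+1} \le 9N$ completes the argument. I expect the main obstacle to be the bookkeeping in the iterated shifting identity---matching the geometric Fourier-decay rate $\rho^m$ against the accumulated tail error $\sum_{j \ge k}\epsilon_j$---after which the ergodicity step is essentially the same framework applied to a second-moment estimate.
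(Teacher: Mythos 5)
Your treatment of the convergence statement and of the quantitative bound $d_{KR}(\mu,\frac1d\sum_\ell\nu_k^{(\ell)})\le C\sum_{j\ge k}\epsilon_j$ is essentially correct and essentially the paper's argument: the ``shifting identity'' $\nu_{k+1}^{(\ell)}\approx\lambda(A_{k+1})\nu_k^{(\ell-1)}+\lambda(B_{k+1})\nu_k^{(\ell)}$ is exactly what the paper verifies in Corollary \ref{cor:to:bary} (same use of (A) to restrict, of the tower structure and (B) to equidistribute the restriction to $A_{k+1}$, and of (c) to control the $U$-piece). Where you differ is in how the iterated identity is equalized: the paper's Lemma \ref{lemma:to:bary} runs a Hilbert-projective-metric contraction for the positive matrices built from $\lambda(A_i),\lambda(B_i)$, whereas you use Fourier inversion on $\zed/d\zed$ and the bound $|\lambda(B_i)+\lambda(A_i)\omega|^2\le 1-2c^2(1-\cos(2\pi/d))$ from (a). That is a clean, correct substitute and arguably more explicit about the contraction rate $\rho$.

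The ergodicity argument, however, has a genuine gap: the two limits $N\to\infty$ and $k\to\infty$ do not commute, and your proposal relies on exactly that interchange. The estimate $\|\frac1N\sum_{i=1}^NU_T^i\phi\|^2_{L^2(\mu_k)}\le\frac2d\sum_\ell(\int\phi\,d\nu_k^{(\ell)})^2+2\|\phi\|^2_{\operatorname{Lip}}\epsilon_k^2$ is only available when $N\ge r_{k+1}/9$, so $N$ and $k$ are coupled. For fixed $N$ you can let $\mu_j\to\mu$ weak-$*$, but then the hypothesis $N\ge r_{j+1}/9$ fails for large $j$; taking $k=k(N)$ instead, you must transfer the bound from $\mu_{k(N)}$ to $\mu$ for the $N$-dependent integrand $g_N=|\frac1N\sum_{i\le N}\phi\circ(S\times S)^i|^2$, and the only available control is $|\int g_N\,d\mu-\int g_N\,d\mu_{k(N)}|\le\operatorname{Lip}(g_N)\,d_{KR}(\mu,\mu_{k(N)})$, where $\operatorname{Lip}(g_N)$ involves the Lipschitz constants of $\phi\circ(S\times S)^i$ for $i\le N$ --- unbounded in this abstract setting, since $S$ is merely a homeomorphism. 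Put dynamically: a $\mu$-generic point $(x,y)$ does not lie on any graph $\{(x,S^{n_k^{(\ell)}}x)\}$, and whether its length-$N$ Birkhoff average agrees with that of $(x,S^{n_k^{(\ell)}}x)$ is a shadowing statement, not a weak-$*$ statement. This is precisely where the paper does its real work: it identifies $\operatorname{supp}(\mu_x)=\{p_1(x),\dots,p_d(x)\}$, and via the sets $G_k$ and Corollary \ref{cor:dist move} shows that for a positive-measure set of $(x,y)$ the orbit $(S\times S)^i(x,y)$, $0\le i\le r_{k+1}/9$, stays within $\sum_{j>k}\epsilon_j$ of the orbit of a single $(\nu_k^{(\ell')})_x$, so that (B) can be applied; hypotheses (d) and (b) enter exactly there, to guarantee the orbit avoids the switching loci $J_j\cup S^{r_j}J_j$ and the sets $U_j$ for $j>k$ during that window, keeping the tracking index $\ell'$ coherent. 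The ergodic conclusion is then reached through Lemmas \ref{lemma:sufficient} and \ref{lemma:ergodic:full:measure}. Your proposal never invokes (d) in the ergodicity step, which is the tell-tale sign that the coherence/shadowing issue has been bypassed rather than solved; to repair the argument you would need to import essentially the paper's $G_k$ construction.
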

To connect this to the remarks above, consider the case that the
$\nu_0^{(\ell)}$ are given shifted power joinings and we want an
ergodic measure close to $\frac 1 d \sum \nu_0^{(\ell)}$. Of course
this only treats particular types of linear combinations, but if our
system is rigid (which rigid rank 1 by interval transformations are),
for any shifted power joining we have different shifted power joinings
close to it. For example, if we want to approximate $\tilde{\nu}=\frac 2 3 (T^n\times id)_*\lambda+\frac 1 3 (T^m\times id)_*\lambda$ we choose $k$ so that $T^k \approx id$. This means 
$$\tilde{\nu}\approx \frac 1 3 (T^{n+k}\times id)_*\lambda+\frac 1 3 (T^n \times id)_*\lambda+\frac 1 3 (T^m \times id)_*\lambda$$ and this is the measure we approximate as above. 
This lets us treat general linear combinations of shifted
power joinings. 

\begin{rem}One can drop the assumption that $(S,Y,\lambda)$ is uniquely ergodic. In this case one replaces \ref{cond:kr close} by 
$$\lambda(\{x:d_{KR}( \frac 1 L \sum_{i=1}^{L}(S \times S)^i(\nu_k^{(\ell)})_x,\nu_{k}^{(\ell)})>\epsilon_k \text{ for some }L\geq \frac{r_{k+1}}9\})<\epsilon_{k}.$$
This requires some straightforward changes to the estimates in the proof of Corollary \ref{cor:to:bary} and the definition of the set $G_k$ in the proof of Proposition~\ref{prop:KSV:joining}.
\end{rem}

\subsection{Proof of Proposition~\ref{prop:KSV:joining}}
\begin{lemma}
\label{lemma:to:bary}
Given $c>0$ and $d \in \mathbb{N}$ there exists $\rho<1$, $C$ so that
if $0 < \delta_i <1/2$ and $a_i, b_i$ are such that $a_i,b_i>c$ and $1\geq
a_i+b_i>1-\delta_i$ and also $0 \le \gamma_i^{(\ell)}\le 1$ are
sequences of real numbers for each $\ell \in \{1,...,d\}$ satisfying
\begin{equation}
\label{eq:gammas:assumption}
|\gamma_i^{(\ell)}-(a_i\gamma_{i-1}^{(\ell-1)}+b_i\gamma_{i-1}^{(\ell)})|<\delta_{i-1}
\end{equation}
then 
$$\left|\gamma_i^{(s)}-\frac 1 d \sum_{\ell=1}^d\gamma_k^{(\ell)}\right|\leq
C\sum_{j=k}^{i-1}\left(\delta_i+\frac{\delta_i}{1-\delta_i}\right)+C\rho^{i-k}$$ for
all $k \ge 0$, $i>k$ and $s \in \{1,...,d\}$. 
\end{lemma}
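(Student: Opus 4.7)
The plan is to reformulate the recursion as a perturbed matrix iteration. Write $\gamma_i = (\gamma_i^{(1)}, \ldots, \gamma_i^{(d)}) \in [0,1]^d$ and $\mathbf{1} = (1,\ldots,1)$, and set $M_i := b_i I + a_i C$ where $C$ is the cyclic shift on $\mathbb{R}^d$ with $C\mathbf{e}_\ell = \mathbf{e}_{\ell+1}$ (indices mod $d$). The hypothesis then reads $\gamma_i = M_i \gamma_{i-1} + e_i$ with $\|e_i\|_\infty < \delta_{i-1}$. Decompose $\mathbb{R}^d = \mathbb{R}\cdot\mathbf{1} \oplus \mathbf{1}^\perp$ and split $\gamma_i = \bar{\gamma}_i \mathbf{1} + \gamma_i^\perp$ with $\bar{\gamma}_i = \frac{1}{d}\sum_\ell \gamma_i^{(\ell)}$. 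Since $I$ and $C$ each preserve this decomposition, so does $M_i$, so the two components evolve independently and can be controlled separately.

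For the constant part, averaging the recursion yields $\bar{\gamma}_i = (a_i + b_i)\bar{\gamma}_{i-1} + \bar{e}_i$ with $|\bar{e}_i| < \delta_{i-1}$. Using $a_i + b_i \in (1-\delta_i, 1]$, $\bar{\gamma}_{i-1} \in [0,1]$, and the elementary inequality $1 - \prod_{j=k+1}^{i}(1-\delta_j) \le \sum_{j=k+1}^{i}\frac{\delta_j}{1-\delta_j}$, a telescoping bound delivers $|\bar{\gamma}_i - \bar{\gamma}_k| \le \sum_{j=k}^{i-1}\bigl(\delta_j + \frac{\delta_j}{1-\delta_j}\bigr)$.

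For the orthogonal part, the eigenvalues of $M_i$ are $b_i + a_i \omega^j$ with $\omega = e^{2\pi \sqrt{-1}/d}$, for $j = 0,\ldots,d-1$. The $j=0$ eigenvector spans $\mathbb{R}\cdot\mathbf{1}$; for $j \ne 0$ the strict inequality $|b_i + a_i\omega^j| < b_i + a_i \le 1$ holds, since equality would force $a_i\omega^j$ and $b_i$ to be non-negative real multiples of each other, hence $\omega^j = 1$. Over the compact admissible region $\{(a,b) : a,b \ge c,\ a+b \le 1\}$, the continuous map $(a,b) \mapsto \max_{j \ne 0}|b + a\omega^j|$ attains a uniform maximum $\rho = \rho(c,d) < 1$, so $\|M_i|_{\mathbf{1}^\perp}\|_{op} \le \rho$ for every $i$. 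Hence $\|\gamma_i^\perp\|_\infty \le \rho\|\gamma_{i-1}^\perp\|_\infty + C_1 \delta_{i-1}$ (absorbing norm equivalences on $\mathbb{R}^d$), and iterating together with $\|\gamma_k^\perp\|_\infty \le 1$ yields $\|\gamma_i^\perp\|_\infty \le C \rho^{i-k} + C \sum_{j=k}^{i-1}\delta_j$.

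The lemma now follows from the triangle inequality $|\gamma_i^{(s)} - \bar{\gamma}_k| \le \|\gamma_i^\perp\|_\infty + |\bar{\gamma}_i - \bar{\gamma}_k|$. The only substantive step is establishing the uniform spectral gap on $\mathbf{1}^\perp$: without the lower bound $a_i, b_i \ge c > 0$ the eigenvalues $b_i + a_i \omega^j$ could collapse onto the unit circle and the orthogonal component would fail to contract uniformly. Everything else is routine bookkeeping that tracks how the perturbations $e_i$ and the small defect $1 - (a_i+b_i)$ accumulate along the iteration.
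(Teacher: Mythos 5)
Your argument is correct, and it reaches the conclusion by a genuinely different mechanism than the paper for the key contraction step. The paper first introduces the auxiliary normalized sequence $\hat{\gamma}_i^{(\ell)}=\frac{a_i}{a_i+b_i}\hat{\gamma}_{i-1}^{(\ell-1)}+\frac{b_i}{a_i+b_i}\hat{\gamma}_{i-1}^{(\ell)}$, bounds $|\hat{\gamma}_i^{(\ell)}-\gamma_i^{(\ell)}|$ by the accumulated defects, and then proves exponential convergence of $\hat{\gamma}$ to its (exactly preserved) average by observing that the $d$-step composition is a positive matrix with entries bounded below by $\zeta(c,d)>0$, hence a uniform contraction in the Hilbert projective metric. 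You instead keep the unnormalized recursion $\gamma_i=M_i\gamma_{i-1}+e_i$ with $M_i=b_iI+a_iC$ a circulant, split $\reals^d=\reals\cdot\mathbf{1}\oplus\mathbf{1}^{\perp}$ (legitimate, since $M_i$ and $M_i^{\mathrm{t}}$ both have $\mathbf{1}$ as an eigenvector), absorb the normalization defect $1-(a_i+b_i)<\delta_i$ into the drift of the mean, and get the contraction on $\mathbf{1}^{\perp}$ from the spectral gap $\max_{j\neq 0}|b+a\omega^j|<a+b\le 1$, made uniform by compactness of $\{a,b\ge c,\ a+b\le 1\}$ and the normality of circulants (so that operator norm on $\mathbf{1}^{\perp}$ equals the spectral radius there). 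Your route is cleaner in that it exploits the exact circulant structure and gives an explicit $\rho=\rho(c,d)$ as a maximum of eigenvalue moduli, but it is correspondingly less robust: the Hilbert-metric argument in the paper would survive if the recursion mixed the indices $\ell$ in a non-circulant (or non-normal) way, whereas your spectral bound leans on normality to convert the eigenvalue gap into an operator-norm contraction in a single step. The remaining bookkeeping (telescoping the mean, iterating $\|\gamma_i^{\perp}\|\le\rho\|\gamma_{i-1}^{\perp}\|+C_1\delta_{i-1}$, and the final triangle inequality) matches the paper's error accounting up to constants and the same harmless index slippage already present in the statement's $\delta_i$ versus $\delta_j$.
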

\begin{proof}Let $\hat{\gamma}_k^{(\ell)}=\gamma_k^{(\ell)}$ and inductively let $\hat{\gamma}_i^{(\ell)}=\frac{a_i}{a_i+b_i}\hat{\gamma}_{i-1}^{(\ell-1)}+\frac{b_i}{a_i+b_i}\hat{\gamma}_{i-1}^{(\ell)}$. 
Observe that
% \begin{displaymath}
% |\hat{\gamma}_{k+1}^{(\ell)}-\gamma_k^{(\ell)}| =
% \left|\frac{a_i}{a_i+b_i}\gamma_i^{(\ell-1)}- +\frac{b_i}{a_i+b_i}\gamma_i^{(\ell)}-\gamma_i^{(\ell)}\right|.
% \end{displaymath}
\begin{multline*}
|\hat{\gamma}_{i}^{(\ell)}-\gamma_i^{(\ell)}| \leq
\left|\frac{a_i}{a_i+b_i}\left(\hat{\gamma}_{i-1}^{(\ell-1)} -
\gamma_{i-1}^{(\ell-1)}\right)+\frac{b_i}{a_i+b_i}\left(
\hat{\gamma}_{i-1}^{(\ell)} - \gamma_{i-1}^{(\ell)} \right)\right|+\\
 \left|\frac{a_i}{a_i+b_i}\gamma_{i-1}^{(\ell-1)}+\frac{b_i}{a_i+b_i}\gamma_{i-1}^{(\ell)}-\gamma_i^{(\ell)}\right|.
\end{multline*}
% \begin{multline*}
% |\hat{\gamma}_{i}^{(\ell)}-\gamma_i^{(\ell)}|\leq \left|\hat{\gamma}_i^{(\ell)}-\left(\frac{a_i}{a_i+b_i}\gamma_i^{(\ell-1)}+\frac{b_i}{a_i+b_i}\gamma_i^{(\ell)}\right)\right| +\\
%  \left|\frac{a_i}{a_i+b_i}\gamma_i^{(\ell-1)}+\frac{b_i}{a_i+b_i}\gamma_i^{(\ell)}-\gamma_i^{(\ell)}\right|.
% \end{multline*}
The second term is at most $\frac{\delta_{i-1}}{1-\delta_{i-1}}+\delta_{i-1}$ and using this we inductively see that %the first term is at most
 $|\hat{\gamma}_i^{\ell}-{\gamma}_i^{(\ell)}|\leq \sum_{j=k}^{i-1}\left(\delta_i+\frac{\delta_i}{1-\delta_i}\right)$. 

Thus it suffices to show that there exists $ C,\rho$ so that
${|\hat{\gamma}_i^{(s)}-\frac 1 d
  \sum_{\ell=1}^d\gamma_k^{(\ell)}|<C\rho^{i-k}}$. To see this note
that $\hat{\gamma}_{i+d}^{(s)}=\sum c_{\ell,s}
\hat{\gamma}_i^{(\ell)}$ where $1\ge c_{\ell,s}>\zeta>0$ for some fixed $\zeta$ depending only on
$c$ and $d$. Consider the matrix $A_{i}$ which has $(\ell,s)$ entry
equal to $c_{\ell,s}$. This matrix is a definite contraction in the
Hilbert projective metric. Indeed, for every $\zeta$ there exists $\theta>0$ so that if $M$ is a positive matrix where the ratio of every pair of entries is at most $\zeta$ and $v,w$ are any vectors in the positive cone then  
  $D_{HP}(Mv,Mw)<\theta D_{HP}(v,w)$ where $D_{HP}$ denotes the Hilbert Projective metric. Now
$\hat{\gamma}_{k+rd}^{(\ell)}$ is the $\ell^{\text{th}}$ entry of
$A_{k}A_{k+d}...A_{k+(r-1)d}\tilde{\gamma}$ where $\tilde{\gamma}$ is
the vector whose $i^{\text{th}}$ entry is
$\hat{\gamma}_i^{(k)}$. Since each $A_{i+jd}$ is a definite
contraction in the Hilbert projective metric, we see that
$|\hat{\gamma}_{i+rd}^{(\ell)}-\hat{\gamma}_{i+rd}^{(\ell')}|$ decays
exponentially in $r$. It is straightforward to check that $\frac 1
d\sum_{\ell=1}^d\hat{\gamma}_i^{(\ell)}=\frac 1 d
\sum_{\ell=1}^d\hat{\gamma}_k^{(\ell)}=\frac 1 d
\sum_{\ell=1}^d\gamma_k^{(\ell)}$ and so
$|\hat{\gamma}_{k+rd}^{(\ell)}-\frac 1 d
\sum_{\ell=1}^d\gamma_k^{(\ell)}|$ decays exponentially in $r$. After
choosing $C>\rho^{-d}$ we get $|\hat{\gamma}_{k+j}^{(\ell)}-\frac 1 d \sum_{\ell=1}^d\gamma_k^{(\ell)}|<C\rho^{j}$.
\end{proof}
\begin{cor}
\label{cor:to:bary}
Under the assumptions of Proposition~\ref{prop:KSV:joining} there
exist $ \rho<1$, $C'>0$ so that $d_{KR}(\nu_k^{(\ell)},\frac 1
d\sum_{\ell=1}^d \nu_{b}^{(\ell)})\leq C'\sum_{j=b}^k\epsilon_j
+C'\rho^{k-b}$ whenever $k\geq b$ and $\ell\in \{1,\dots,d\}$.
\end{cor}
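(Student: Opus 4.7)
The plan is to reduce the claim to Lemma~\ref{lemma:to:bary} by considering, for each $1$-Lipschitz $f:Y\times Y\to\reals$, the scalar sequences $\gamma_k^{(\ell)}(f):=\int f\,d\nu_k^{(\ell)}$. After subtracting a constant (which does not affect differences of these integrals) we may assume $f$ is uniformly bounded on the compact space $Y\times Y$. Set $a_k:=\lambda(A_k)$ and $b_k:=\lambda(B_k)$. Assumptions \ref{cond:defin} and \ref{cond:U small} give $a_k,b_k>c$ and $a_k+b_k=1-\lambda(U_k)>1-\epsilon_k$, so the coefficient hypotheses of Lemma~\ref{lemma:to:bary} hold with $\delta_k$ a fixed constant multiple of $\epsilon_k$ (for $k$ large enough that $\epsilon_k<1/2$; finitely many small $k$ are absorbed into $C'$).

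The main step is to verify the recursion
\[
\bigl|\gamma_k^{(\ell)}(f)-\bigl(a_k\,\gamma_{k-1}^{(\ell-1)}(f)+b_k\,\gamma_{k-1}^{(\ell)}(f)\bigr)\bigr|\le C\,\epsilon_{k-1}.
\]
Write $\int f\,d\nu_k^{(\ell)}=\int F_k^{(\ell)}(x)\,d\lambda(x)$ with $F_k^{(\ell)}(x):=\int f(x,y)\,d(\nu_k^{(\ell)})_x(y)$, and split the outer integral over the disjoint sets $A_k$, $B_k$, $U_k$. Since $\|f(x,\cdot)\|_{Lip}\le 1$, condition \ref{cond:track} replaces $F_k^{(\ell)}$ by $F_{k-1}^{(\ell-1)}(x)=f(x,S^{n_{k-1}^{(\ell-1)}}x)$ on $A_k$ and by $F_{k-1}^{(\ell)}(x)=f(x,S^{n_{k-1}^{(\ell)}}x)$ on $B_k$, each at pointwise cost $\epsilon_k$; the contribution from $U_k$ is $O(\epsilon_k)$ by \ref{cond:U small}.

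To convert $\int_{A_k}f(x,S^{n_{k-1}^{(\ell-1)}}x)\,d\lambda$ into $\lambda(A_k)\int f\,d\nu_{k-1}^{(\ell-1)}$ I use the Rokhlin tower structure. By condition \ref{con:return} the sets $S^iJ_k$ for $1\le i\le r_k$ are pairwise disjoint, so on $T_k:=\bigcup_{i=1}^{r_k}S^iJ_k$ the integral unfolds as
\[
\int_{J_k}\sum_{i=1}^{r_k}f\bigl((S\times S)^i(x,S^{n_{k-1}^{(\ell-1)}}x)\bigr)\,d\lambda(x).
\]
Condition \ref{cond:kr close} applied to $\nu_{k-1}^{(\ell-1)}$ with $L=r_k$ bounds the inner Birkhoff sum uniformly in $x$ by $r_k\int f\,d\nu_{k-1}^{(\ell-1)}+r_k\epsilon_{k-1}$; integrating over $J_k$ and using $\lambda(T_k)=r_k\lambda(J_k)$ together with $\lambda(T_k\setminus A_k)\le\lambda(U_k)<\epsilon_k$ yields
\[
\int_{A_k}f(x,S^{n_{k-1}^{(\ell-1)}}x)\,d\lambda(x)=\lambda(A_k)\int f\,d\nu_{k-1}^{(\ell-1)}+O(\epsilon_{k-1}+\epsilon_k).
\]
For the $B_k$ piece I write $\int_{B_k}=\int_Y-\int_{A_k}-\int_{U_k}$: the total integral equals $\int f\,d\nu_{k-1}^{(\ell)}$ exactly, the $A_k$-piece is handled by the same tower argument with shift $n_{k-1}^{(\ell)}$, and the $U_k$-piece is $O(\epsilon_k)$. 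Combining everything and using $\epsilon_k\le\epsilon_{k-1}$ (from \ref{cond:small error}) establishes the recursion.

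Lemma~\ref{lemma:to:bary} then yields $|\gamma_k^{(s)}(f)-\tfrac{1}{d}\sum_{\ell=1}^{d}\gamma_b^{(\ell)}(f)|\le C'\sum_{j=b}^{k}\epsilon_j+C'\rho^{k-b}$, and taking the supremum over normalized $1$-Lipschitz $f$ gives the claimed $d_{KR}$ bound. The main hurdle is the Rokhlin-tower conversion above: it is where conditions \ref{con:return} (disjointness of tower levels) and \ref{cond:kr close} (uniform convergence of Birkhoff averages for the shifted power joining) are essential. Once that conversion is in place, the remainder is routine bookkeeping of error terms that, by \ref{cond:small error}, sum to a finite quantity and fit the scalar lemma exactly.
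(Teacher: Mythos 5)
Your proposal is correct and follows essentially the same route as the paper: both reduce to Lemma~\ref{lemma:to:bary} applied to the scalars $\gamma_k^{(\ell)}=\int f\,d\nu_k^{(\ell)}$, and both verify the recursion by splitting over $A_k$, $B_k$, $U_k$, using condition \ref{cond:track} to swap disintegrations at cost $\epsilon_k$, and unfolding $\int_{A_k}$ over the tower $\bigcup_{i=1}^{r_k}S^iJ_k$ so that condition \ref{cond:kr close} converts the Birkhoff sum into $\lambda(A_k)\int f\,d\nu_{k-1}^{(\ell-1)}+O(\epsilon_{k-1}+\epsilon_k)$. The one place you diverge is the $B_k$ term: the paper partitions $B_j$ into return-time towers $D_\ell$ over $S^{r_j}J_j$ and reruns the same Birkhoff-average argument, whereas you obtain the $B_k$ estimate by complementation ($\int_{B_k}=\int_Y-\int_{A_k}-\int_{U_k}$, with $\int_Y f(x,S^{n_{k-1}^{(\ell)}}x)\,d\lambda=\int f\,d\nu_{k-1}^{(\ell)}$ exactly and $\lambda(B_k)=1-\lambda(A_k)-\lambda(U_k)$); this shortcut is valid and slightly cleaner, since it avoids checking condition \ref{cond:kr close} on the variable-height return towers.
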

\bold{Remark.} Corollary~\ref{cor:to:bary} establishes all the
conclusions of  Proposition~\ref{prop:KSV:joining}
except the ergodicity of $\mu$.
\begin{proof}[Proof of Corollary~\ref{cor:to:bary}]
First notice that by \ref{cond:track}  we have that 
\begin{equation}\label{eq:bary1}
d_{KR}(\nu_j^{(\ell)}|_{A_j},\nu_{j-1}^{(\ell-1)}|_{A_j})<\epsilon_j\text{ and }d_{KR}(\nu_j^{(\ell)}|_{B_j},\nu_{j-1}^{(\ell)}|_{B_j})<\epsilon_j.
\end{equation}
We now claim that for all $\ell$, 
\begin{equation}
\label{eq:KR est2}
d_{KR}\left(\frac 1 {\lambda(A_j)} \nu_{j-1}^{(\ell)}|_{A_j},\nu_{j-1}^{(\ell)}\right)<\epsilon_{j-1}+2\epsilon_j+\frac {\epsilon_j} {c^2}
\end{equation}
Indeed, for $f$ 1-Lipschitz with $\|f\|_{\sup}\leq 1$ we have
\begin{multline*}
\frac{1}{\lambda(J_j)r_j}\int_{A_j}f \,
d\nu_{j-1}^{(\ell)}=\frac{1}{\lambda(J_j)r_j}\int_{\bigcup_{i=1}^{r_j}S^iJ_j\setminus U}f \,
d\nu_{j-1}^{(\ell)}=\frac{1}{\lambda(J_j)r_j}\sum_{i=1}^{r_j}\int_{J_j}f \circ S^i(x)
\chi_{U^c}(S^ix)\, d\nu_{j-1}^{(\ell)}\\
=\frac{1}{\lambda(J_j)r_j}\sum_{i=1}^{r_j}\int_{J_j}f \circ S^i(x)
d\nu_{j-1}^{(\ell)}-\frac{1}{\lambda(J_j)r_j}\sum_{i=1}^{r_j}\int_{J_j}f \circ S^i(x)
\chi_{U}(S^ix)\, d\nu_{j-1}^{(\ell)}.
\end{multline*}
By \ref{cond:kr close}
\begin{displaymath}
\left|\frac{1}{\lambda(J_j)r_j}\sum_{i=1}^{r_j}\int_{J_j}f \circ S^i(x)
d\nu_{j-1}^{(\ell)} - \int f \,
d\nu_{j-1}^{(\ell)}\right| \le \epsilon_{j-1},
\end{displaymath}
and by \ref{cond:U small} (i.e.\ the size estimate on $U_j$),
\begin{displaymath}
\left|\frac{1}{\lambda(J_j)r_j}\sum_{i=1}^{r_j}\int_{J_j}f \circ S^i(x)
\chi_{U}(S^ix)\, d\nu_{j-1}^{(\ell)}\right| \le \|f\|_{\sup} \lambda(U_j
\cap \bigcup_{i=1}^{r_j}S^i J_j ) \le 2 \epsilon_j.
\end{displaymath}
Then (\ref{eq:KR est2}) follows because 
$$\left|\frac{1}{r_j\lambda(J_j)}-\frac 1 {\lambda(A_j)}\right|\leq \left|\frac 1 {r_j\lambda(J_j)}-\frac{1}{r_j \lambda(J_j)-\lambda(U_j)}\right|\leq \frac{\epsilon_j}{c^2}.$$

Similarly, by partitioning $B_j$ into $D_{\frac{r_j}2},...$ where 
$$D_\ell=\{x\in S^{r_j}J_j: \min\{i>0:S^ix\in J_j\}=\ell\},$$ 
we get
\begin{equation}
\label{eq:KR:est:2prime}
d_{KR}\left(\frac 1 {\lambda(B_j)} \nu_{j-1}^{(\ell)}|_{B_j},\nu_{j-1}^{(\ell)}\right)<\epsilon_{j-1}+2\epsilon_j+\frac {\epsilon_j} {c^2}.
\end{equation}

 So for any 1-Lipschitz function, $f$, with $\|f\|_{\sup}\leq 1$, we
 claim that we may apply Lemma~\ref{lemma:to:bary} to 
$\gamma_i^{(\ell)}=\int fd\nu_{i}^{(\ell)}$ with $c=c$,
$\delta_{j-1}=\epsilon_{j-1}+4\epsilon_j+\frac {\epsilon_j} {c^2}$,
$a_j=\lambda(A_j)$ and $b_j=\lambda(B_j)$. To verify
(\ref{eq:gammas:assumption}), note that
$$\left|\int f
d\nu_i^{(\ell)}-\int_{A_i}fd\nu_i^{(\ell)}-\int_{B_i}fd\nu_i^{(\ell)}\right|\leq
\|f\|_{\sup}\lambda(U_i)<\epsilon_i$$ and so by (\ref{eq:bary1})
$$\left|\int fd \nu_i^{(\ell)}-\int_A f\, d\nu_{i-1}^{(\ell-1)}-\int_B f
d\nu_{i-1}^{(\ell)}\right|\leq 2\epsilon_i.$$
Then, by (\ref{eq:KR est2}) and (\ref{eq:KR:est:2prime}),
$$\left|\int fd \nu_i^{(\ell)}-\Big(\lambda(A_i)\int
fd\nu_{i-1}^{(\ell-1)}+\lambda(B_i)\int f
d\nu_{i-1}^{(\ell)}\Big)\right|\leq \epsilon_{j-1}+4\epsilon_j+\frac
{\epsilon_j} {c^2}.$$
This completes the verification of (\ref{eq:gammas:assumption}), and,
in view of Lemma~\ref{lemma:to:bary}, the proof of
Corollary~\ref{cor:to:bary}. 
\end{proof}

To complete the proof of Proposition~\ref{prop:KSV:joining}, we need
to prove that $\mu$ is ergodic. We start with the following:

\begin{lemma}
\label{lemma:sufficient}
It suffices to show that for any
$\epsilon>0$ and $M\in \natls$ there exists $c > 0$ and $G\subset Y \times Y$ with $\mu(G)>c$ and so that for $(x,y)\in G$ there exists $L>M$ with 
$d_{KR}(\frac 1 L \sum_{i=1}^L\hat{\delta}_{(S\times S)^i(x,y)},\mu)<\epsilon$. 
\end{lemma}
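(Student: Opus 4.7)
My plan is to deduce ergodicity of $\mu$ by combining the hypothesis of the lemma with Birkhoff's pointwise ergodic theorem applied to the ergodic decomposition of $(Y\times Y, S\times S, \mu)$. Recall that for $\mu$-a.e.\ $(x,y)$, the empirical averages $A_L^{(x,y)} := \frac{1}{L}\sum_{i=1}^L \hat{\delta}_{(S\times S)^i(x,y)}$ converge in $d_{KR}$ to an ergodic component $\mu_{(x,y)}$, and $\mu = \int \mu_{(x,y)}\,d\mu(x,y)$. Proving $\mu$ is ergodic reduces to showing $\mu(\{(x,y) : \mu_{(x,y)} = \mu\}) > 0$: on a positive-measure set the ergodic component equals $\mu$, so $\mu$ itself, being equal to an ergodic measure, must be ergodic.

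Assume for contradiction that $\mu$ is non-ergodic. Then there exists $\delta>0$ with $\mu(E_\delta)>0$, where $E_\delta := \{(x,y) : d_{KR}(\mu_{(x,y)},\mu) > \delta\}$. By Egoroff applied to the Birkhoff convergence, there is a subset $E'_\delta \subset E_\delta$ with $\mu(E'_\delta)>\mu(E_\delta)/2$ and an integer $M_0$ such that for every $(x,y)\in E'_\delta$ and every $L>M_0$,
$$d_{KR}(A_L^{(x,y)},\mu_{(x,y)})<\delta/4, \quad \text{hence} \quad d_{KR}(A_L^{(x,y)},\mu)>3\delta/4.$$
Applying the hypothesis with $\epsilon = \delta/2$ and with $M$ equal to any integer $\ge M_0$ produces sets $G_M$ with $\mu(G_M)>c$ (where $c>0$ is furnished by the hypothesis); each such $G_M$ must be disjoint from $E'_\delta$ by the displayed inequality. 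Forming $F := \bigcap_{M\ge M_0} G_M$, i.e.\ the set of $(x,y)$ for which $A_L^{(x,y)}$ visits the $(\delta/2)$-ball of $\mu$ at arbitrarily large times, monotonicity gives $\mu(F)\ge c$, and together with Birkhoff's theorem this forces $F\subset \{d_{KR}(\mu_{(x,y)},\mu)\le \delta/2\}$ up to a null set.

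Finally I would iterate this step along a sequence $\delta_n \downarrow 0$ to obtain nested sets $F^{(n)}$ whose intersection lies in $\{\mu_{(x,y)}=\mu\}$; provided the constants from the hypothesis remain bounded below along this iteration, the intersection has positive $\mu$-measure, yielding the desired contradiction with non-ergodicity. The main obstacle I foresee is precisely this uniformity of the constant $c$ --- both in $M$ (needed when passing to the intersection $F$) and along the sequence $\delta_n\downarrow 0$ (needed so that the nested intersection retains positive measure). Without uniformity one only concludes that the ergodic components of $\mu$ accumulate arbitrarily close to $\mu$, which is strictly weaker than $\mu$ itself being ergodic. The structural features of the construction of $\mu$ in Proposition~\ref{prop:KSV:joining} --- in particular the uniform lower bound $\lambda(A_k),\lambda(B_k)>c$ of condition \ref{cond:defin} --- are precisely what should supply the required uniformity when the hypothesis is verified in practice.
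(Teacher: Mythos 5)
Your proof is essentially the paper's own argument: the paper likewise invokes the almost-sure convergence of the empirical measures to an ergodic component of $\mu$ (its Lemma~\ref{lemma:ergodic:full:measure}, which is Birkhoff's theorem applied to the ergodic decomposition) and then argues that on the set-limsup of the sets $G$, taken over a sequence of $\epsilon$'s going to $0$, the measure $\mu$ is a weak-* limit point of the empirical averages and therefore coincides with the ergodic limit $\mu_{(x,y)}$; a single such point already makes $\mu$ ergodic. Two comments. First, the set you call $F=\bigcap_{M\ge M_0}G_M$ is not controlled by ``monotonicity'' --- the $G_M$ need not be nested; what you want (and what your parenthetical description actually says) is the intersection of the nested supersets $B_M=\{(x,y):\exists L>M \text{ with } d_{KR}(\frac{1}{L}\sum_{i=1}^{L}\hat{\delta}_{(S\times S)^i(x,y)},\mu)<\epsilon\}\supset G_M$, for which $\mu(\bigcap_M B_M)=\lim_M \mu(B_M)\ge c$ provided $c$ is independent of $M$. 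Second, the uniformity of $c$ that you single out as the main obstacle is a genuine requirement rather than an artifact of your route: if $c$ is allowed to depend on $\epsilon$ and $M$, the hypothesis merely asserts that each $G$ has positive measure, and the conclusion then fails in general (a non-ergodic $\mu$ whose ergodic components accumulate at $\mu$, with all but finitely many of them in any neighborhood of $\mu$, satisfies the hypothesis; such measures exist precisely in the Poulsen-simplex situations this paper is concerned with). The paper's statement is ambiguous on this quantifier, and its proof (``the set lim sup of the $G$'') implicitly relies on the same uniform lower bound via reverse Fatou; in the application the bound is supplied explicitly, since the sets $G_k$ constructed in the proof of Proposition~\ref{prop:KSV:joining} have measure at least $1/2$ for all large $k$. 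Your Egoroff/contradiction paragraph becomes redundant once the final nested-intersection argument is in place, but it is not incorrect.
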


To prove Lemma~\ref{lemma:sufficient} we use the following consequence of the ergodic decomposition.
\begin{lemma}
\label{lemma:ergodic:full:measure}
Let $\tilde{T}:\tilde{Y} \to \tilde{Y}$ be a measurable map of a $\sigma$-compact metric space and $\tilde{\mu}$ be a invariant measure. For $\tilde{\mu}$ almost every $z \in \tilde{Y}$ we have that
 $\frac 1 N\sum_{i=0}^{N-1} \delta_{\tilde{T}z} $ converges to an ergodic measure in the weak-* topology. (The measure is allowed to depend on the point.) 
\end{lemma}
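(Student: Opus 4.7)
The plan is to combine the ergodic decomposition theorem with Birkhoff's pointwise ergodic theorem, exploiting separability of $C_0(\tilde{Y})$ to obtain simultaneous convergence against a countable dense family of test functions.

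First I invoke the ergodic decomposition. Since $\tilde{Y}$ is $\sigma$-compact metric, it is standard Borel, and one obtains a $\tilde{\mu}$-a.e.\ defined measurable assignment $z \mapsto \mu_z$ into the space of $\tilde{T}$-invariant Borel probability measures, such that $\mu_z$ is ergodic for $\tilde{\mu}$-a.e.\ $z$ and $\tilde{\mu} = \int \mu_z\, d\tilde{\mu}(z)$. One may arrange the assignment so that it is orbit-invariant, i.e.\ for $\tilde{\mu}$-a.e.\ $z$ one has $\mu_w = \mu_z$ for $\mu_z$-a.e.\ $w$.

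Next, since $\tilde{Y}$ is $\sigma$-compact metric, $C_0(\tilde{Y})$ is separable; I fix a countable dense subset $\{f_n\}_{n\geq 1}$. For each $n$, Birkhoff's theorem applied to the ergodic system $(\tilde{Y},\tilde{T},\mu_z)$ gives, for $\mu_z$-a.e.\ $w$,
\[
\lim_{N\to\infty} \frac{1}{N}\sum_{i=0}^{N-1} f_n(\tilde{T}^i w) \;=\; \int f_n\, d\mu_z \;=\; \int f_n\, d\mu_w.
\]
Integrating against $\tilde{\mu}$ produces a full-measure set $F_n$ on which this limit holds, and $E := \bigcap_n F_n$ still has $\tilde{\mu}(E)=1$.

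Finally, for $w \in E$, put $\nu_N^w := \frac{1}{N}\sum_{i=0}^{N-1} \delta_{\tilde{T}^i w}$. These are probability measures with $\int f_n\, d\nu_N^w \to \int f_n\, d\mu_w$ for every $n$. Density of $\{f_n\}$ in $C_0(\tilde{Y})$ together with the uniform total-mass bound $\nu_N^w(\tilde{Y}) \leq 1$ extends convergence to every $f \in C_0(\tilde{Y})$, i.e.\ $\nu_N^w \to \mu_w$ in the vague topology; since all measures involved are probability measures, vague convergence coincides with weak-$*$ convergence. As $\mu_w$ is ergodic for $w \in E$, this is the desired conclusion. The main subtlety is securing the ergodic decomposition together with the orbit-invariance property $\mu_w = \mu_z$ along $\mu_z$-a.e.\ orbit in the purely measurable (not continuous) setting, so that the limit $\mu_w$ is unambiguously attached to the point $w$ rather than to a particular component one happened to disintegrate along; once this is in place the rest is a standard Birkhoff plus separability argument.
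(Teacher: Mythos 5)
Your proof is correct and follows essentially the same route as the paper's: ergodic decomposition with the orbit-invariance property $\mu_w=\mu_z$ for $\mu_z$-a.e.\ $w$, then Birkhoff's theorem applied componentwise against a countable dense family of test functions, then assembling a full-measure set. The only cosmetic difference is that you work with $C_0(\tilde{Y})$ and explicitly address the vague-versus-weak-$*$ issue (which the paper silently handles by using $C_c$), a point worth making but not a change of method.
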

\begin{proof} $\tilde{\mu}$ has an ergodic decomposition $\tilde{\mu}=\int_{\tilde{Y}}\tilde{\mu}_y d\tilde{\mu}$ where $\tilde{\mu}_y$ 
is an ergodic probability measure with $\tilde{\mu}_y(\{z: \tilde{\mu}_z=\tilde{\mu}_{y}\})=1$ for $\tilde{\mu}$-almost every $y$. For each $y$, let 
$$Z_y=\{z:\underset{N \to \infty}{\lim}\, \frac 1 N \sum_{i=0}^{N-1}f(T^ix)=\int f d\tilde{\mu}_y \text{ for every }f\in C_c(Y)\}.$$
Because there is a countable $\|\cdot\|_{\sup}$-dense subset of $C_c(Y)$, by the Birkhoff Ergodic Theorem, we have that $\tilde{\mu}_y(Z_y)=1$ for all $y$. 
$\cup Z_y$ has full $\tilde{\mu}$ measure and satisfies the conclusion of the lemma. 
\end{proof}

\begin{proof}[Proof of Lemma~\ref{lemma:sufficient}]
By our assumptions, a positive $\mu$ measure set of $(x,y)$ have that $\mu$ is a weak-* limit point (in particular the set lim sup of the $G$ for a choice of $\epsilon$ going to 0). Throwing out a set of $\mu$ measure zero where the limit may not exist, Lemma~\ref{lemma:ergodic:full:measure} implies this is the unique weak-* limit point and it is ergodic.
\end{proof}

We now identify a set of full measure for $\mu$. As a preliminary, by
the assumptions \ref{cond:small error} and \ref{cond:track} of
Proposition~\ref{prop:KSV:joining} we have that if $x \notin
\bigcap_{n=1}^{\infty}\bigcup_{k=n}^\infty U_k$ (this is a full
measure condition) then there exists $p_1(x),...,p_{d}(x)$ so that
$$\underset{i \to \infty}{\lim} \{(\nu_i^{(\ell)})_x\}_{\ell=1}^d=\{\hat{\delta}_{p_1(x)},\dots,\hat{\delta}_{p_{d}(x)}\}.$$

\begin{lemma} $\mu\Big(\big\{\big(x,p_1(x)),...,(x,p_{d}(x)\big)\big\}_{x\notin
  \cap_{n=1}^{\infty}\cup_{k=n}^\infty U_k}\Big)=1.$ 
\end{lemma}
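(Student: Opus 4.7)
The plan is to use the weak-$*$ convergence $\frac{1}{d}\sum_\ell \nu_k^{(\ell)}\to\mu$ supplied by Corollary~\ref{cor:to:bary}, combined with pointwise convergence of the fiber disintegrations $(\nu_k^{(\ell)})_x$ (after relabelling by a cyclic permutation depending on $x$) on the full-measure set
$W:=Y\setminus\bigcap_{n=1}^\infty\bigcup_{k\geq n}U_k$, to identify $\mu$ explicitly as the image of $\lambda|_W$ under the multi-valued map $x\mapsto\{(x,p_1(x)),\ldots,(x,p_d(x))\}$.

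First I would note that $\lambda(W)=1$: by conditions \ref{cond:U small} and \ref{cond:small error} we have $\sum_k\lambda(U_k)\leq\sum_k\epsilon_k<\infty$, and Borel--Cantelli applies. Fix $x\in W$ and choose $N=N(x)$ with $x\notin U_k$ for all $k\geq N$, so that $x\in A_k\cup B_k$ for all such $k$. Define permutations $\sigma_k\in\Sigma_d$ recursively by $\sigma_N=\mathrm{id}$ and, for $k>N$,
\[
\sigma_k(\ell)=\begin{cases}\sigma_{k-1}(\ell)-1\pmod d & x\in A_k,\\ \sigma_{k-1}(\ell) & x\in B_k.\end{cases}
\]
The tracking condition \ref{cond:track} then gives $d_{KR}\bigl((\nu_k^{(\sigma_k(\ell))})_x,(\nu_{k-1}^{(\sigma_{k-1}(\ell))})_x\bigr)<\epsilon_k$ for every $\ell$, so by \ref{cond:small error} the sequence $k\mapsto (\nu_k^{(\sigma_k(\ell))})_x$ is Cauchy in $d_{KR}$. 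Since each term is a point mass $\hat\delta_{S^{n_k^{(\sigma_k(\ell))}}x}$ on the $\{x\}\times Y$ fiber, the limit is itself a point mass $\hat\delta_{p_\ell(x)}$; these are exactly the points $p_1(x),\ldots,p_d(x)$ announced before the lemma.

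Second, I would compute $\int f\,d\mu$ for an arbitrary continuous $f:Y\times Y\to\mathbb{R}$. By Corollary~\ref{cor:to:bary} and the fact that $\nu_k^{(\ell)}$ is a shifted power joining,
\[
\int f\,d\mu=\lim_{k\to\infty}\frac{1}{d}\sum_{\ell=1}^d\int f\,d\nu_k^{(\ell)}=\lim_{k\to\infty}\int_Y\frac{1}{d}\sum_{\ell=1}^d f\bigl(x,S^{n_k^{(\ell)}}x\bigr)\,d\lambda(x).
\]
Averaging over $\ell$ is invariant under the permutations $\sigma_k$, so the inner sum equals $\frac{1}{d}\sum_\ell f\bigl(x,S^{n_k^{(\sigma_k(\ell))}}x\bigr)$. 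For each $x\in W$ this converges pointwise to $\frac{1}{d}\sum_\ell f(x,p_\ell(x))$ by the first step, and it is bounded by $\|f\|_{\sup}$. Dominated convergence gives
\[
\int f\,d\mu=\int_W\frac{1}{d}\sum_{\ell=1}^d f(x,p_\ell(x))\,d\lambda(x).
\]

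Third, since this holds for every continuous $f$, the measure $\mu$ equals $\int_W\frac{1}{d}\sum_\ell\hat\delta_{(x,p_\ell(x))}\,d\lambda(x)$, which is concentrated on $\bigl\{(x,p_1(x)),\ldots,(x,p_d(x))\bigr\}_{x\in W}$. This is the claimed lemma.

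The main obstacle is the first step: the labels $\ell$ do not give convergent sequences of disintegrations on their own, because each passage through $A_k$ cyclically permutes which shifted power joining tracks which. The recursive definition of $\sigma_k$ above is exactly what is needed to turn the cumulative $\sum\epsilon_k$ error budget supplied by \ref{cond:track} and \ref{cond:small error} into honest Cauchy sequences; once that is in place the rest is dominated convergence against the weak-$*$ limit.
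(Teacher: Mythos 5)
Your argument is correct and follows essentially the same route as the paper: the paper's proof is exactly the weak-$*$ computation of your second and third steps (it dismisses the key display as ``straightforward''), while your first step, the permutation-tracking that produces the limits $p_\ell(x)$, is precisely the content the paper asserts in the ``preliminary'' remark just before the lemma. The only quibble is the direction of your cyclic relabelling: to match condition (A) as stated, the recursion on $A_k$ should be $\sigma_k(\ell)=\sigma_{k-1}(\ell)+1 \pmod d$ rather than $-1$, which is immaterial to the argument.
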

\begin{proof}
It is straightforward to see that for any $f \in C(Y\times Y)$ we have 
$$\underset{i \to \infty}{\lim}\int_{Y\times Y} fd(\sum_{\ell=1}^d \frac 1 d \nu_i^{(\ell)})=\int_Y\frac{1}{d}\sum_{\ell=1}^df(x,p_\ell(x))d\lambda.$$
  By Corollary \ref{cor:to:bary}, the left hand side is $\int f\, d\mu
  = \underset{i \to \infty}{\lim}\int_{Y\times Y} fd \nu_i^{(\ell)}$ for every $\ell$, establishing the lemma. 
\end{proof}
\begin{proof}[Proof of Proposition~\ref{prop:KSV:joining}]
Let $G_k$ be the set of all $x \in Y$ so that 
\begin{enumerate}
\item $S^ix \notin \bigcup_{j=k+1}^\infty J_j \cup S^{r_j}J_j$ for all $0\leq i \leq \frac{r_{k+1}}9$
\item $|\{0\leq i\leq r_{k+1}: S^jx \in \bigcup_{j=k}^\infty
U_j\}|<4\sum_{j=k}^\infty \frac {\epsilon_k}9 r_{k+1}$. 
\item $x \notin \bigcup_{j=k}^\infty U_j$.
\end{enumerate}

\begin{claim}For all large enough $k$ we have that $\lambda(G_k)\geq \frac 1 2 $. 
\end{claim}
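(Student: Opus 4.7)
The plan is to establish each of the three defining conditions of $G_k$ separately, then combine them by a union bound. All three are tail estimates: conditions (1) and (2) concern the behavior of an orbit segment of bounded length (namely $r_{k+1}/9$ and $r_{k+1}$ respectively), and condition (3) is a direct measure bound. Since the target sets $\bigcup_{j \geq k+1} J_j$, $\bigcup_{j\geq k} U_j$ shrink with $k$, the $S$-invariance of $\lambda$ together with Markov's inequality should suffice.

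To handle condition (1), I would bound the set of $x$ whose orbit segment of length $r_{k+1}/9$ visits $\bigcup_{j\geq k+1}(J_j \cup S^{r_j}J_j)$. By $S$-invariance of $\lambda$, this measure is at most
\begin{displaymath}
\Big(\tfrac{r_{k+1}}{9}+1\Big)\cdot 2\sum_{j \geq k+1}\lambda(J_j),
\end{displaymath}
the factor $2$ accounting for both $J_j$ and its image $S^{r_j}J_j$. Condition \ref{con:return} forces $\lambda(J_j) \leq 2/(3r_j)$ via Kac's inequality, so the $j=k+1$ term contributes a fixed constant of order $\tfrac{8}{27}$, while $r_{k+1}\sum_{j\geq k+2}\lambda(J_j)$ is $o(1)$ by condition \ref{cond:scale grow}. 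This gives a bound strictly less than $\tfrac{1}{3}$ for $k$ large.

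For condition (2), I would apply Markov's inequality to the occupation count of $\bigcup_{j\geq k}U_j$ along the window $0 \leq i \leq r_{k+1}$: using condition \ref{cond:U small}, its expectation is at most $r_{k+1}\sum_{j\geq k}\epsilon_j$, and the threshold in the definition of $G_k$ is a sufficiently large constant multiple of this, so the bad set has measure bounded by an explicit constant much smaller than $\tfrac{1}{2}$. Condition (3) is immediate: $\lambda(\bigcup_{j\geq k}U_j) \leq \sum_{j\geq k}\epsilon_j \to 0$ by condition \ref{cond:small error}.

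The main obstacle is arithmetic rather than conceptual: the bound for (1) is a fixed positive constant rather than $o(1)$, so the window $r_{k+1}/9$ and the Markov threshold constant in (2) must be calibrated so that the three measure bounds sum to something strictly less than $\tfrac{1}{2}$. With the numerical choices built into the definition, this balance works, and combining the three estimates via the union bound yields $\lambda(G_k) \geq \tfrac{1}{2}$ for all sufficiently large $k$.
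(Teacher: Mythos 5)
Your overall strategy --- a union bound over the three defining conditions of $G_k$, using invariance of $\lambda$ together with condition \ref{cond:scale grow} and the return-time condition \ref{con:return} for condition (1), Markov's inequality together with conditions \ref{cond:U small} and \ref{cond:small error} for condition (2), and a direct bound for condition (3) --- is exactly the ``straightforward measure estimate'' the paper has in mind; the paper offers no details beyond citing \ref{cond:U small}, \ref{cond:scale grow} and \ref{cond:small error}. Your treatment of (1) and (3) is correct: the disjointness of $J_j, SJ_j, \dots$ up to time $\tfrac32 r_j$ forced by \ref{con:return} gives $\lambda(J_j)\le 2/(3r_j)$, so the $j=k+1$ term contributes at most $\tfrac{4}{27}+o(1)$ (not $\tfrac{8}{27}$, but the slip is harmless), the tail is $o(1)$ by \ref{cond:scale grow}, and $\lambda(\bigcup_{j\ge k}U_j)\le\sum_{j\ge k}\epsilon_j\to 0$.

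The gap is in condition (2), precisely at the sentence ``the threshold in the definition of $G_k$ is a sufficiently large constant multiple of this.'' It is not, as written. The expected occupation count of $\bigcup_{j\ge k}U_j$ over the window $0\le i\le r_{k+1}$ is at most $(r_{k+1}+1)\sum_{j\ge k}\epsilon_j$, while the threshold appearing in the definition of $G_k$ is $\tfrac{4}{9}r_{k+1}\sum_{j\ge k}\epsilon_j$, which is \emph{smaller} than this expectation bound. Markov's inequality therefore yields a bound of roughly $9/4>1$ for the bad set, which is vacuous; worse, if the orbit of a typical point equidistributes in $\bigcup_{j\ge k}U_j$ and $\lambda(\bigcup_{j\ge k}U_j)$ is close to $\sum_{j\ge k}\epsilon_j$, then condition (2) would fail for almost every $x$, so no argument can rescue the literal constant. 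Either the constant in condition (2) must be enlarged (any threshold exceeding, say, $10\,(r_{k+1}+1)\sum_{j\ge k}\epsilon_j$ makes your Markov step close with room to spare against the $<\tfrac{4}{27}+o(1)$ loss from (1) and the $o(1)$ loss from (3)), or an argument beyond Markov is required. You should state explicitly which threshold you are using and verify the resulting inequality rather than asserting that ``the balance works,'' since this is the one point where the arithmetic genuinely does not.
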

This is a straightforward measure estimate using Assumptions \ref{cond:U small}, \ref{cond:scale grow} and \ref{cond:small error}.
\medskip

Suppose $x \in G_k$ and $y \in supp(\mu_x)$.  The next claim shows that  there exists $\ell_j$ so that
$\lim (\nu_j^{(\ell_j)})_x$ is the point mass at $y$.
\begin{claim}
\label{claim:1}
There exists $\ell'$ so that $d_{KR}(\hat{\delta}_y,(\nu_{k}^{(\ell')})_x)<3\sum_{j=k+1}^\infty \epsilon_j$. Also 
$$d_{KR}\left(\frac 9 {r_{k+1}}\sum_{i=1}^{\frac{r_{k+1}}9}\hat{\delta}_{(S\times S)^i(x,y)}, \nu_k^{(\ell')}\right)<C''\sum_{j=k}^\infty \epsilon_j.$$ 
\end{claim}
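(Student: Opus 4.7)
The plan is to first identify the index $\ell'$ via the tracking rule of condition \ref{cond:track} together with the description of $\operatorname{supp}(\mu)$ from the preceding lemma, and then to deduce both estimates by combining this tracking with condition \ref{cond:kr close} and the orbit control provided by $x \in G_k$.

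For the first bound, the preceding lemma gives $y = p_{\ell_0}(x)$ for some $\ell_0 \in \{1,\dots,d\}$. Since $x \in G_k$ avoids $U_j$ for every $j \ge k$, at each level $j \ge k$ the point $x$ belongs to $A_j$ or $B_j$, so condition \ref{cond:track} determines a forward-tracking sequence $\ell^{(k)}, \ell^{(k+1)}, \dots$ with $\ell^{(j+1)} = \ell^{(j)} + 1 \pmod d$ when $x \in A_{j+1}$ and $\ell^{(j+1)} = \ell^{(j)}$ when $x \in B_{j+1}$, and the associated point masses $(\nu_j^{(\ell^{(j)})})_x$ converge to some $\hat{\delta}_{p_m(x)}$. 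The assignment $\ell^{(k)} \mapsto m$ is a bijection of $\{1,\dots,d\}$, so we may choose $\ell' := \ell^{(k)}$ forcing $m = \ell_0$. Telescoping the distance bound of \ref{cond:track} along this chain and passing to the limit gives $d_{KR}((\nu_k^{(\ell')})_x, \hat{\delta}_y) \le \sum_{j>k}\epsilon_j$; the factor $3$ in the claim absorbs slack from the limit interchange.

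For the second bound, observe that $(\nu_k^{(\ell')})_x = \hat{\delta}_{(x, S^{n_k^{(\ell')}} x)}$, so condition \ref{cond:kr close} at level $k$ with $L = r_{k+1}/9$ yields
$$d_{KR}\!\left(\tfrac{9}{r_{k+1}} \sum_{i=1}^{r_{k+1}/9} \hat{\delta}_{(S^i x,\, S^{i + n_k^{(\ell')}} x)},\ \nu_k^{(\ell')}\right) < \epsilon_k.$$
Since both measures share the first-coordinate orbit, comparing this to the target $\tfrac{9}{r_{k+1}}\sum \hat{\delta}_{(S^i x, S^i y)}$ reduces to bounding the averaged distance $\tfrac{9}{r_{k+1}} \sum_{i=1}^{r_{k+1}/9} d(S^i y, S^{i + n_k^{(\ell')}} x)$. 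Conditions (1) and (2) defining $G_k$ are engineered for this: (1) guarantees that the orbit $\{S^i x\}$ avoids the ``transition'' sets $J_j \cup S^{r_j} J_j$ for $j > k$ throughout the window, and (2) bounds the density of visits to the exceptional sets $U_j$ by $\tfrac{4}{9}\sum_{j \ge k}\epsilon_j$. On the regular indices, we bootstrap through Part 1 at a well-chosen higher level $k' > k$: Part 1 at $k'$ makes $S^{n_{k'}^{(\ell^{(k')})}} x$ arbitrarily close to $y$, and the uniform continuity of each $S^i$ for $i \le r_{k+1}/9$ propagates this closeness along the whole orbit segment, allowing substitution of $S^i y$ by a point within the desired $d$-distance of $S^{i+n_k^{(\ell')}} x$; the residual level mismatch is absorbed by Corollary~\ref{cor:to:bary}, which supplies $d_{KR}(\nu_{k'}^{(\ell^{(k')})}, \nu_k^{(\ell')}) \le C' \sum_{j \ge k} \epsilon_j$. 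The hardest step is this final calibration of $k'$: it must be large enough that the bootstrap approximation of $y$ is tight at the fixed length scale $r_{k+1}/9$, yet the resulting level mismatch must still contribute only $O(\sum_{j \ge k}\epsilon_j)$. The precise design of $G_k$ is exactly what makes this balance possible, and the various combinatorial constants are absorbed into the $C''$ of the claim.
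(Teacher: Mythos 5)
Your first estimate is argued essentially as in the paper: you telescope condition \ref{cond:track} along the tracking chain at the single point $x$ (using that $x\in G_k$ avoids all $U_j$, $j\ge k$) and identify $\ell'$ so that the limiting point mass is $\hat{\delta}_y$. That part is fine.

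The second estimate has a genuine gap. After correctly reducing to bounding the average of $d(S^i y,\, S^{i+n_k^{(\ell')}}x)$ over the window, you propagate closeness \emph{at the base point} $x$ out to the orbit points $S^i x$ via equicontinuity of the family $\{S^i\}_{i\le r_{k+1}/9}$, and you absorb the level mismatch by quoting Corollary~\ref{cor:to:bary}. Neither step delivers the required quantitative bound. First, Corollary~\ref{cor:to:bary} controls $d_{KR}(\nu_{k'}^{(\ell'')},\nu_k^{(\ell')})$ as measures; it says nothing about the pointwise distance $d(S^{n_{k'}^{(\ell'')}}z, S^{n_k^{(\ell')}}z)$ at the specific points $z=S^i x$, which is what your reduction needs. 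Second, even where you do have a pointwise input (closeness of $S^{n_{k'}}x$ and $S^{n_k^{(\ell')}}x$ to $y$ at the point $x$, of size $O(\sum_{j>k}\epsilon_j)$), pushing it through $S^i$ only yields ``small'' in the sense of the modulus of continuity of $S^i$ — there is no reason the output is again $O(\sum_{j\ge k}\epsilon_j)$ with a constant $C''$ independent of $k$ and of $i\le r_{k+1}/9$. The claim demands exactly such a uniform bound, so the quantitative control is lost at this step.

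The paper avoids this by never transporting an estimate from $x$ to $S^i x$: it applies the iterated tracking (the in-proof Lemma and Corollary~\ref{cor:dist move}) \emph{directly at each orbit point} $S^i x$, which is legitimate because conditions (1) and (2) in the definition of $G_k$ guarantee that $S^i x$ avoids the transition sets $J_j\cup S^{r_j}J_j$ for $j>k$ throughout the window and misses most of the $U_j$. This gives $d\bigl(\hat{\delta}_{(S^ix,S^iy)},(\nu_k^{(\ell)})_{S^ix}\bigr)\le 2\sum_{j>k}\epsilon_j$ uniformly in $i$ (continuity of the fixed map $S^i$ is used only to identify the limit point as $S^i y$, not to transfer a quantitative bound). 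Averaging over $i$ and invoking condition \ref{cond:kr close} then finishes the argument. Your proof needs to be restructured along these lines; the bootstrap through a higher level $k'$ and the equicontinuity argument should be discarded.
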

\begin{proof}[Proof of Claim~\ref{claim:1}]
We first state the following straightforward consequence of the condition \ref{cond:track} of Proposition~\ref{prop:KSV:joining} (by considering if $x \in A_k$ or $x\in B_k$):
\begin{lemma}Let $a \in \{1,..,d\}$. If $S^jx\notin J_k \cup S^{r_k}J_k$ for $0\leq j\leq L$ then there exists $\ell$ (it is either $a$ or $a+1$) so that 
$d_{KR}((\nu_k^{(\ell)})_{S^jx},(\nu_{k-1}^{(a)})_{S^jx})<\epsilon_k$ for any $0\leq j\leq L$ with $S^jx \notin U_k$. 
\end{lemma}
By iterating we obtain:
\begin{cor}\label{cor:dist move} For all $j>k, \, \ell \in \{1,...,d\}$ and $x\in G_k$ %we have that if $S^ix \notin \cup_{s=k+1}^j J_s \cup S^{r_s}J_s$ then
 there exists $\ell'$  so that
$d((\nu_k^{(\ell)})_{S^ix},(\nu_j^{(\ell')})_{S^ix})<2\sum_{s=k+1}^j
\epsilon_s$ for any $0\leq i\leq \frac {r_{k+1}}9$ with $S^i x \notin
\bigcup_{s=k+1}^j U_s$.  
\end{cor}
Note that if $L\geq \frac{r_{k+1}}9$ by condition \ref{cond:kr close} of the proposition we obtain 
\begin{equation}
\label{eq:close measures} d_{KR}(\frac 1 L\sum_{j=1}^{L}(\nu_{k}^{(a)})_{S^jx},\nu_{k}^{(a)})<\epsilon_{k}.%+\frac 1 L|\{1\leq j\leq L:S^jx\in U_{k+1}\}|.
\end{equation}

By Corollary \ref{cor:dist move} there exists $\ell$ so that if $S^ix
\notin \bigcup_{\ell=k+1}^\infty U_\ell$ then for some $\ell$ we have
$d_{KR}(\delta_{(S^i \cross S^i)(x,y)},(\nu_k^{(\ell)})_{S^i x})\leq \sum_{j=k+1}^\infty \epsilon_j$ (for $0\leq i\leq \frac{r_{k+1}}9$). 
With \eqref{eq:close measures} this gives 
$$d_{KR}(\sum_{i=1}^{\frac{r_{k+1}}9}\delta_{(S\times S)^i(x,y)},
\nu_{k}^{(a)})< \epsilon_k+2 \sum_{j=k+1}^\infty
\epsilon_j+4\sum_{j=k+1}^\infty \epsilon_j .$$
%The claim follows from this corollary, Equation (\ref{eq:close measures}) and the fact that $|\{0\leq i\leq r_{k+1}: S^jx \in \cup_{j=k}^\infty U_j\}|<\frac {\epsilon}9 r_{k+1}$. 
\end{proof}

This completes the proof by verifying Lemma~\ref{lemma:sufficient} since for all $\epsilon>0$ there exists $k_0$ so that for all $k\geq k_0$ and $\ell \in \{1,...,d\}$ we have 
$d_{KR}(\mu,\nu_k^{(\ell)})<\epsilon$ (by Corollary \ref{cor:to:bary}).
\end{proof}

\section{Proof of Theorem \ref{theorem:Veech}}\label{sec:theorem1}
In this section, we will verify the conditions of
Proposition~\ref{prop:KSV:joining}. 

Before beginning the proof we set up a geometric context connected to our situation. 
A 3-IET with lengths $\ell_1,\ell_2$ and $\ell_3$ is a rescaling of the Poincar\'e first return map of 
rotation by $\frac{\ell_2+\ell_3}{\ell_1+2\ell_2+\ell_3}$ to the interval $[0,\frac {\ell_1+\ell_2+\ell_3}{\ell_1+2\ell_2+\ell_3})\subset [0,1)$ \cite[Section 8]{KS}. 
If $\omega_{sq}$ denotes the  area one square torus oriented horizontally and
vertically, observe that rotation by $\alpha$ corresponds to the first return map of the vertical flow on $\begin{pmatrix} 1&-\alpha\\0&1\end{pmatrix}\omega_{sq}$ to a horizontal side, which is also the time one map of that flow.% $F^1_{\begin{pmatrix} 1&-\alpha\\0&1\end{pmatrix}\omega_{sq}}.$

To set up the geometric context, 
let $\mathcal{M}_{1,2}$ denote the moduli space of area 1 tori with
two marked points. Note that $\cM_{1,2}$ is isomorphic to
$(SL(2,\reals) \ltimes \reals^2)/(SL(2,\zed) \ltimes \zed^2)$. 
%We denote elements of this group by $(A,\begin{pmatrix}s\\t\end{pmatrix})$. 
For $\omega
\in \mathcal{M}_{1,2}$ let $F^t_\omega$ denote the vertical flow on
$\omega$, which corresponds to left multiplication by the element
$\begin{pmatrix} 1  & 0 \\ 0 & 1 \end{pmatrix} \ltimes \begin{pmatrix}
0 \\ t \end{pmatrix}$. Let $\hat{\omega}\in \mathcal{M}_{1,2}$ be the
square torus with two marked points distance $\frac 1 2 $ apart on the
same horizontal line segment.   Let $\mathcal{S} \subset
\mathcal{M}_{1,2}$ be the set of surfaces $\omega$ so that $F^1_{\omega}p$ is on the same horizontal as $\omega$ and its distance along this horizontal is at most $\frac 1 2 $.  That is, if $p$ is one marked point the other marked point is at $\begin{pmatrix} 1  & 0 \\ 0 & 1 \end{pmatrix} \ltimes\begin{pmatrix}s\\0\end{pmatrix}$ where $s\leq \frac 1 2 $. 
%{\sc Is there a better way to say this?}
% Let $\mathcal{S}\subset \mathcal{M}_{1,2}$ be the set of translation surfaces so that $F_{\omega}^1(p)$ is translated by $p$ along the horizontal by at most $\frac 1 4$. 

Let $T$ be a 3-IET. It arises as the first return map of a rotation
$R_\alpha$ to an interval $K$. Let 
\begin{displaymath}
\psi_M(x) = \sum_{\ell=0}^{M -1} \chi_K(R_\alpha^\ell x).
\end{displaymath}
Then, for any $x \in K$ so that $R_\alpha^M x\in K$,
\begin{equation}
\label{eq:time:change:general}
T^{\psi_M(x)} x = R_\alpha^M x.
\end{equation}

  Let $\omega_T\in \mathcal{M}_{1,2}$ be the torus defined by taking the torus 
$\begin{pmatrix} 1&-\alpha\\0&1\end{pmatrix}\omega_{sq}$ and 
% where
%$\omega_{sq}$ is the  area one square torus oriented horizontally and
%vertically, and
 marking two points on the bottom horizontal line that
are $|K|$ apart. 
 When it is convenient, in what follows we will
consider $K$ as being embedded in $\omega_T$ and $g_t K$ as being
embedded in $g_t \omega_T$ where
$g_t=\begin{pmatrix} e^t&0\\0&e^{-t}\end{pmatrix}$. Here we are
identifying $g_t$ with the matrix $\begin{pmatrix}
e^t&0\\0&e^{-t}\end{pmatrix} \ltimes \begin{pmatrix} 0 \\
0 \end{pmatrix}$ and think of $g_t$ as acting on $\cM_{1,2} \isom
SL(2,\reals) \ltimes \reals^2/SL(2,\zed) \ltimes \zed^2$ by left
multiplication. 
Thus, for any $M\in \natls$ we can identify $\psi_M(x)$ as the
intersection number between $K$ and a vertical line of length $M$ on $\omega_T$
starting at a $x$, see Figure~\ref{fig:original}. Using this as a
definition, we can make sense of $\psi_M$ for all $M \in
\reals^+$. 

If we embed $K$ in $\omega_T$, then for $x \in K$ and $M \in \natls$, we have
\begin{equation}
\label{eq:iet:vs:vert}
T^{\phi_M(x)} = F^{M}(x) \quad\text{{if $F^M(x) \in K$}}
\end{equation}
where $\phi_M(x)$ is the number of intersections between a vertical
line of length $M$ starting at $x$ and $K$.

\begin{figure}[ht]
    \centering
    \includegraphics[width=0.3\textwidth]{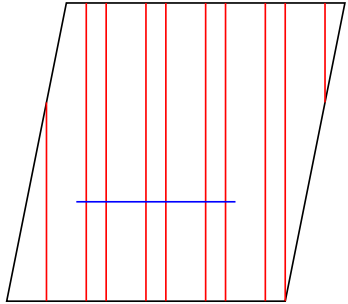}
    \caption{The torus $\omega_T$. A vertical segment of length $M$
      (colored in red)
      intersects a horizontal slit (colored in blue) of length $|K|$.}
    \label{fig:original}
\end{figure} 

\begin{lemma}
\label{lemma:hatw:limit:point}
For almost every $T$ we have that $\hat{\omega}$ is a
limit point of $\{g_t\omega_T\}_{t \geq 0}.$ 
\end{lemma}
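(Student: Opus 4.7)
The plan is to identify the family $\{\omega_T\}$ with an open piece of an unstable horospherical leaf for the $g_t$-action on $\cM_{1,2}$, and then invoke the standard fact that $g_t$-orbits of almost every starting point on such a leaf are dense in $\cM_{1,2}$.

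Concretely, under the identification $\cM_{1,2} \isom (SL(2,\reals) \ltimes \reals^2)/(SL(2,\zed) \ltimes \zed^2)$, the surface $\omega_T$ corresponds to the coset of
$$h(\alpha, s) := \left( \begin{pmatrix} 1 & -\alpha \\ 0 & 1 \end{pmatrix},\, \begin{pmatrix} s \\ 0 \end{pmatrix} \right),$$
where $\alpha = (\ell_2+\ell_3)/(\ell_1+2\ell_2+\ell_3)$ and $s=|K|$. A direct computation in the semidirect product gives $g_t\, h(\alpha,s)\, g_{-t} = h(e^{2t}\alpha,\, e^{t}s)$, so $H := \{h(\alpha,s) : \alpha, s \in \reals\}$ is the full unstable horospherical subgroup of $SL(2,\reals) \ltimes \reals^2$ with respect to $g_t$, and $\{\omega_T\}$ is an open piece of the $H$-orbit of the identity coset $[e]$. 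Moreover the parametrization $(\ell_1,\ell_2,\ell_3) \mapsto (\alpha, s)$ is real-analytic with nondegenerate Jacobian off a null set, so Lebesgue measure on 3-IETs pushes forward to a measure absolutely continuous with respect to $d\alpha\, ds$, and it suffices to prove the conclusion for Lebesgue-a.e.\ $(\alpha, s)$.

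The key input is that the $g_t$-flow on $\cM_{1,2}$ is mixing with respect to the $SL(2,\reals) \ltimes \reals^2$-invariant probability measure $m$ (which follows from Howe-Moore applied to the semisimple quotient together with a short Fourier check on the $\reals^2/\zed^2$ fibers). A now-classical consequence, obtained via either the Margulis thickening argument or Shah's equidistribution theorem for expanding translates of unipotent orbits, is that for Lebesgue-a.e.\ $h \in H$ the orbit $\{g_t\, h\, [e]\}_{t \geq 0}$ equidistributes with respect to $m$. Since $\hat\omega$ lies in the support of $m$, which is all of $\cM_{1,2}$, it is a limit point of $\{g_t\omega_T\}_{t \geq 0}$ for Lebesgue-a.e.\ $(\alpha,s)$, and hence for almost every 3-IET $T$. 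The main (and essentially only nontrivial) obstacle is invoking this equidistribution statement in the marked-torus setting rather than the classical $SL(2,\reals)/SL(2,\zed)$ setting, but since $g_t$ lies in the semisimple factor and acts compatibly with the semidirect product structure, the reduction from the classical version is mechanical.
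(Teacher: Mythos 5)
Your proposal is correct and takes essentially the same route as the paper: both identify $\{\omega_T\}$ with a positive-measure, absolutely continuous subset of a single orbit of the expanding horospherical subgroup $U^+ = \begin{pmatrix}1 & \ast \\ 0 & 1\end{pmatrix}\ltimes\begin{pmatrix}\ast \\ 0\end{pmatrix}$ of $SL(2,\reals)\ltimes\reals^2$, and then conclude that almost every forward $g_t$-orbit launched from that leaf accumulates at $\hat{\omega}$. The only difference is in the final black box: you invoke mixing (Howe--Moore plus a Fourier check) and equidistribution of expanding horospherical translates, whereas the paper appeals only to ergodicity of $g_t$ together with the standard Hopf-type Fubini argument along unstable leaves; your input is stronger than needed, but the geometric reduction is identical.
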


\begin{proof}
Let $U^+$ denote the subgroup $\begin{pmatrix} 1 & \ast \\ 0 &
1 \end{pmatrix} \ltimes \begin{pmatrix} \ast \\ 0 \end{pmatrix}$ of
$SL(2,\reals) \ltimes \reals^2$. Then, $U^+$ is the expanding
horospherical subgroup with respect to the action of $g_t$, or in
other words, the orbits of $U^+$ are the unstable manifolds for the
flow $g_t$. 

By construction, map $T \to \omega_T$ projects to a positive measure
subset $\cD$ of a single $U^+$ orbit on $\cM_{1,2}$. 
% \footnote{$\{\begin{pmatrix}
%   1&-\alpha\\0&1\end{pmatrix}\omega_{sq}\}_{\alpha\in [0,1)}$ is an
%   unstable leaf in $\mathcal{M}_1$. Because $g_t$ contracts vertical
%   distances, if $\omega,\omega'\in \mathcal{M}_{1,2}$ project to the
%   same central-stable leaf in $\mathcal{M}_1$ and their marked points
%   are on the same vertical line then they are the same central-stable
%   leaf in $\mathcal{M}_{1,2}$.} 
Moreover, the pushforward of the Lebesgue measure on the space of
3-IET's to $\cD$ 
is absolutely continuous with respect to the pushforward of the Haar
measure on $U^+$ to $\cD$. The lemma then follows from the ergodicity of $g_t$. 
\end{proof}

\begin{corollary}
\label{cor:hatw:cap:S}
For every $\delta > 0$ and almost all $T$, there exists arbitrarily
large $t>0$ with $g_t \omega_T \in B(\hat{\omega},\delta) \cap \cS$.  
\end{corollary}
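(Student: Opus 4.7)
The plan is to deduce Corollary~\ref{cor:hatw:cap:S} from Lemma~\ref{lemma:hatw:limit:point} by showing that the $g_t$-orbit of $\omega_T$ always lies in the submanifold of $\cM_{1,2}$ on which $\cS$ is cut out by an open inequality near $\hat{\omega}$. Lemma~\ref{lemma:hatw:limit:point} furnishes, for almost every 3-IET $T$, an unbounded sequence $t_n \to \infty$ with $g_{t_n}\omega_T \to \hat{\omega}$ in $\cM_{1,2}$; our task reduces to arguing that a tail of this sequence lies in $\cS$.

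The key observation is that the initial surface $\omega_T$ has its two marked points horizontally offset at separation $|K|$ (the natural representative in $SL(2,\reals) \ltimes \reals^2$ being $(A_0,(|K|,0))$), and this horizontal-offset feature persists under $g_t$: the pushed-forward representative $(g_t A_0,(e^t|K|,0))$ is still purely horizontal since $g_t$ is diagonal and preserves the subspace $\reals \times \{0\} \subset \reals^2$. When $g_t\omega_T$ is brought into a fundamental domain of $SL(2,\zed) \ltimes \zed^2$ by applying some $(\gamma,w)$ on the right, I would observe that $w$ may be chosen with vanishing $y$-coordinate (these translations form the subgroup of $\zed^2$ preserving the horizontal direction of the new lattice $g_tA_0\gamma \cdot \zed^2$), so that the marked-point offset remains of the form $(s,0)$. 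Hence every $g_t\omega_T$ has a representative lying in the ``horizontal-offset'' submanifold $\cM_{1,2}^{\mathrm{hor}} \subset \cM_{1,2}$, and inside this submanifold the inequality $|s| \leq 1/2$ defining $\cS$ is an open condition near $\hat\omega$ (which itself has $s = 1/2$).

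Combining the two inputs: fix $\delta' \in (0, \delta]$ small enough that every surface in $B(\hat\omega, \delta') \cap \cM_{1,2}^{\mathrm{hor}}$ satisfies the inequality $|s| \le 1/2$ and hence lies in $\cS$. By Lemma~\ref{lemma:hatw:limit:point} there are infinitely many $t_n$ with $g_{t_n}\omega_T \in B(\hat\omega, \delta')$; by the preceding paragraph each of these also lies in $\cM_{1,2}^{\mathrm{hor}}$, so $g_{t_n}\omega_T \in B(\hat\omega, \delta) \cap \cS$, as required.

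The main delicate step is verifying that the $(\gamma, w)$ realizing the modular reduction with $g_t A_0 \gamma$ near $I$ can always be chosen compatibly with $w$ being horizontal (so that the offset stays in $\reals \times \{0\}$). This reduces to an elementary bookkeeping with the semidirect-product law $(A,v)\cdot(\gamma,w) = (A\gamma, v + A w)$ together with the fact that $g_t$ normalizes the expanding horospherical subgroup $U^+$, so that the $U^+$-orbit structure of $\omega_T$ is preserved under the $g_t$-flow.
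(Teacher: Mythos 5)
Your argument has a genuine gap: it treats membership in $\cS$ as an open condition (along the orbit) that is automatically inherited from proximity to $\hat{\omega}$, but the operative part of the definition of $\cS$ is a codimension-one \emph{equality}, namely that $F^1_\omega p - p$ has vanishing vertical component (the time-one vertical flow returns to the same horizontal leaf). This is a condition on the lattice of $g_t\omega_T$, not on the marked points, and it is not preserved along the $g_t$-orbit: the lattice of $g_t\omega_T$ is spanned by $(e^t,0)$ and $(-e^t\alpha,e^{-t})$, so its general element is $(e^t(m-n\alpha),\,ne^{-t})$ and it contains a vector with vertical component exactly $1$ only for special values of $t$ (those with $e^t\in\zed$). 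Hence a generic point of the orbit, however close to $\hat{\omega}$, fails to lie in $\cS$, and Lemma~\ref{lemma:hatw:limit:point} plus ``openness'' cannot yield the corollary; one must actively adjust the flow time. That is exactly what the paper's proof does: writing $F^1_{\omega'}p-p=(v_1,v_2)$ for $\omega'\in B(\hat{\omega},\delta')$, it computes $F^1_{g_s\omega'}p-p=(e^{-s}v_1,\,1-e^s+e^sv_2)$ and solves $1-e^s+e^sv_2=0$ for a small $s$, landing $g_s\omega'$ in $\cS$ while remaining in $B(\hat{\omega},c_3(\delta'))$.

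The auxiliary claim propping up your openness assertion is also false. To represent $g_t\omega_T$ in a fundamental domain one must reduce the offset $(e^t|K|,0)$ modulo the lattice $g_tA_0\zed^2$; the only horizontal lattice vectors are the multiples of $(e^t,0)$, so any reduction to a vector of bounded length must use lattice vectors with nonzero vertical component $ne^{-t}$, $n\neq 0$, and the reduced offset is therefore generically \emph{not} of the form $(s,0)$. (Separately, even in your reading, $\hat{\omega}$ itself sits at $s=1/2$, the boundary of the inequality $s\le 1/2$, so that inequality is not open at $\hat{\omega}$ either.) The repair is to replace your final paragraph with the paper's perturbation in $s$: given $\delta$, pick $\delta'$ with $c_3(\delta')<\delta$, use Lemma~\ref{lemma:hatw:limit:point} to find arbitrarily large $t'$ with $g_{t'}\omega_T\in B(\hat{\omega},\delta')$, and set $t=t'+s$ with $s$ as above.
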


\bold{Proof.}  Since $\hat{\omega}$ is square, for $p
\in \hat{\omega}$, $F^1_{\hat\omega}p=p$. Therefore, for $\omega' \in
B(\hat{\omega},\delta')$ and $p \in \omega'$, 
$F^1_{\omega'} p$ is within $c_1(\delta')$ of $p$, where $c_1(\delta') \to 0$ as
$\delta' \to 0$. Write $F^1_{\omega'} p - p = (v_1, v_2)$, and note that for
$\delta'> 0$ sufficiently small and for small $s \in \reals$, for $p \in g_s
\omega'$, we have 
\begin{displaymath}
F^{1}_{g_s\omega'} p - p = (e^{-s} v_1, 1- e^{s} + e^{s} v_2). 
\end{displaymath}
Therefore, given $\omega' \in B(\hat{\omega}, \delta)$, 
we can choose $s \in \reals$, with $|s| < c_2(\delta')$ where
$c_2(\delta') \to 0$ as $\delta \to 0$, such that $1-e^s + e^s v_2 =
0$, i.e.\ $g_s \omega' \in
\cS$. We have $g_s \omega' \in B(\hat{\omega}, c_3(\delta'))$ with
$c_3(\delta') \to 0$ as $\delta' \to 0$.  

Suppose $T$ is such that $\hat{\omega}$ is a limit point
of $\{g_t \omega_T \}_{t \ge 0}$. Choose $\delta' > 0$ such that
$c_3(\delta') < \delta$ and choose $t'$ such that $g_{t'}
\omega_T \in B(\hat{\omega}, \delta')$ and then let $t = t'+s$ where
$s$ is as in the previous paragraph. Then $g_t \omega_T \in
B(\hat{\omega},\delta) \cap \cS$ as required. 
\qed\medskip

We now apply $g_t$ to Figure~\ref{fig:original}, with $t = \log M$.  
Note that $\psi_M(x)$ is also the intersection number between a
vertical segment $\gamma_1$ of length $1$ and a horizonal slit
$\gamma_2$ of length $M |K|$
(see Figure~\ref{fig:renorm1}). From now on, we assume that $g_t
\omega_T \in B(\hat{\omega}, \delta) \cap \cS$ for some $\delta \ll
1$.

\begin{figure}[ht]
    \centering
    \includegraphics[width=0.3\textwidth]{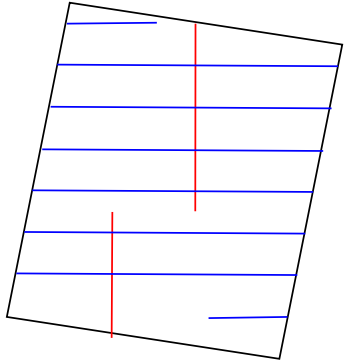}
    \caption{The torus $g_{\log M} \omega_T$: A vertical segment
      $\gamma_1$ of length $1$ (drawn in red) intersects a horizontal
      slit $\gamma_2$ of length $M |K|$ (drawn in blue). If we also
      assume that $g_{\log M} \omega_T \in B(\hat{\omega}, \delta)
      \cap \cS$ then the torus $g_{\log M} \omega_T$ is nearly square,
      and the two endpoints of $\gamma_1$ are on the same horizontal
      line segment, of length at most $1/2+O(\delta)$.}
    \label{fig:renorm1}
\end{figure}

%\begin{prop} If $\hat{\omega}$ is a limit point of $\{g_t\omega_T\}_{t\geq0}$ then $T$ satisfies the the conclusions of the main theorem.
%\end{prop}

\begin{figure}[ht]
    \centering
    \includegraphics[width=0.3\textwidth]{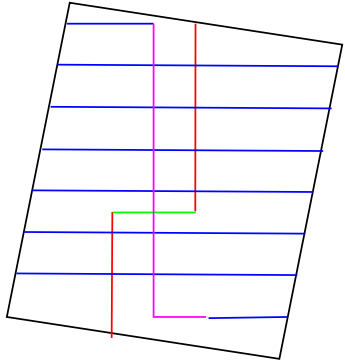}
    \caption{Closing the curves. We complete the vertical segment 
     $\gamma_1$ to a closed
      curve $\hat{\gamma_1}$ by adding a horizontal segment $\zeta_1$
      (drawn in green). Note that since $g_t \omega_T \in
      B(\hat{\omega}, \delta) \cap \cS$, the length of $\zeta_1$ is at
      most $1/2+O(\delta)$. Similarly, we close up the horizontal slit
      $\gamma_2$ to obtain a closed curve $\hat{\gamma}_2$ 
      by adding in a horizontal segment $\zeta_2$ and a
      vertical segment $\zeta_2'$ (drawn in purple).} 
    \label{fig:renorm2}
\end{figure}

The following lemma references Figure~\ref{fig:renorm2}.
\begin{lemma} \label{lem:crossing} There exists $m$ so that if the
green segment does not cross the purple segment then the number of
times a trajectory of length 1 crosses $g_t K$ (the blue lines) is
either $m$ or $m+1$. Moreover it is $m+1$ if it does not cross the
(horizontal) purple segment and $m$ if it does.

In other words, for the set of points $x$ whose green segment does
not cross the purple segment, ${\phi_M(x)}$ is $m$ if its red segment
crosses the purple segment (where $\phi_M$ is as in
\eqref{eq:iet:vs:vert}) and $\phi_M(x) = m+1$ if it does not.  
\end{lemma}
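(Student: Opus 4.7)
The plan is to exploit topology: the algebraic intersection $\hat\gamma_1 \cdot \hat\gamma_2$ on the torus $g_t\omega_T$ depends only on the homology classes in $H_1(g_t\omega_T;\zed) \cong \zed^2$, and this rigidity will force $\phi_M(x)$ to take only two values. First I would identify these classes. Since $\gamma_1$ runs vertically by length $1$ and $g_t\omega_T \in B(\hat\omega,\delta) \cap \mathcal{S}$, the endpoint $F^1(x)$ lies on the same horizontal as $x$ at horizontal distance at most $1/2+O(\delta)$, and the closing segment $\zeta_1$ is exactly this horizontal displacement reversed. Hence $\hat\gamma_1$ represents the class $(0,1)$. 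Likewise the slit $\gamma_2$ has horizontal length $M|K| \le 1/2+O(\delta)$, and since the purple closing pieces $\zeta_2,\zeta_2'$ lie in a nearly-square geometry, $\hat\gamma_2$ closes to a curve of class $(\pm 1, 0)$. Therefore $\hat\gamma_1 \cdot \hat\gamma_2 = \pm 1$.

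Next I would decompose the algebraic intersection into contributions from pairs of component segments:
\[
\hat\gamma_1 \cdot \hat\gamma_2 \;=\; \gamma_1\cdot\gamma_2 + \gamma_1\cdot\zeta_2 + \gamma_1\cdot\zeta_2' + \zeta_1\cdot\gamma_2 + \zeta_1\cdot\zeta_2 + \zeta_1\cdot\zeta_2'.
\]
Every segment is strictly horizontal or strictly vertical, so any pairing of two parallel segments vanishes in generic position, killing $\gamma_1\cdot\zeta_2'$, $\zeta_1\cdot\gamma_2$, and $\zeta_1\cdot\zeta_2$. The three remaining transverse pairings each pit an upward vertical against a rightward horizontal, so every crossing carries the same sign and the algebraic pairing equals the unsigned geometric one. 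Thus $|\gamma_1 \cdot \gamma_2| = \phi_M(x)$, while the two correction terms $|\gamma_1 \cap \zeta_2|$ and $|\zeta_1 \cap \zeta_2'|$ each lie in $\{0,1\}$, because each pairs a single short vertical against a single short horizontal segment. Taking absolute values yields
\[
1 \;=\; \phi_M(x) + |\gamma_1 \cap \zeta_2| + |\zeta_1 \cap \zeta_2'|.
\]

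Finally, applying the hypothesis: if $\zeta_1$ does not cross the purple $\zeta_2 \cup \zeta_2'$, then $|\zeta_1 \cap \zeta_2'|=0$, giving $\phi_M(x) = 1 - |\gamma_1 \cap \zeta_2| \in \{0,1\}$. Setting $m=0$ recovers the statement: $\phi_M(x) = m+1$ when $\gamma_1$ misses the horizontal purple $\zeta_2$, and $\phi_M(x) = m$ when it hits it. The main obstacle I anticipate is justifying the vanishing of the parallel-pair terms (which requires a generic-position argument in $x$) and the single-point transverse intersection bounds (which uses the length bounds $\le 1/2+O(\delta)$ on the auxiliary segments); both should be manageable from the nearly-square geometry of $g_t\omega_T \in B(\hat\omega,\delta) \cap \mathcal{S}$.
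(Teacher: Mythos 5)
Your overall strategy---homotopy invariance of the intersection number $\hat\gamma_1\cdot\hat\gamma_2$, decomposition into segment-pair contributions, vanishing of the parallel pairs, and the observation that the red--(horizontal purple) contribution lies in $\{0,1\}$---is exactly the paper's argument (the paper phrases it as: the curves $\hat\gamma_1(x)$ for varying $x$ are all homotopic, so their intersection with $\hat\gamma_2$ is constant). However, your explicit evaluation of that constant is wrong, and the error propagates into your final identity. The slit $\gamma_2=g_tK$ has length $M|K|=e^t|K|$, which is very large, not at most $1/2+O(\delta)$: what is at most $1/2+O(\delta)$ is the distance between the two \emph{endpoints} of the slit (the marked points), i.e.\ the length of the closing segment $\zeta_2$. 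The slit itself wraps around the nearly unit-square torus roughly $M|K|$ times---this is why Figure~\ref{fig:renorm1} shows several blue lines and why the lemma says ``the blue lines'' in the plural---so $[\hat\gamma_2]$ is $(N,0)$ with $N$ a large integer close to $M|K|$, and $\hat\gamma_1\cdot\hat\gamma_2=\pm N$, not $\pm1$. Consequently your displayed identity $1=\phi_M(x)+|\gamma_1\cap\zeta_2|+|\zeta_1\cap\zeta_2'|$ is false, and $m=0$ cannot be correct: $\phi_M(x)\approx M|K|$ by equidistribution, and the later application (the proof of Proposition~\ref{prop:two meas}) needs $m\geq L_0>\max\{|a|,|b|\}$, which $m=0$ would destroy.

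The repair is easy because the lemma only asserts that \emph{some} $m$ exists: drop the computation of $[\hat\gamma_2]$, set $N=|\hat\gamma_1\cdot\hat\gamma_2|$ (constant in $x$ by homotopy invariance), and run your remaining steps verbatim to get $N=\phi_M(x)+|\gamma_1\cap\zeta_2|$ whenever $\zeta_1$ misses the purple segment, whence $m=N-1$. You should also justify that the red--blue and red--(horizontal purple) crossings carry the same sign (this depends on the direction in which $\zeta_2$ is traversed in $\hat\gamma_2$), since that is what makes hitting the purple segment \emph{decrease} $\phi_M$ by one rather than increase it, i.e.\ what justifies the ``moreover'' clause. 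With those corrections your proof coincides with the paper's.
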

Note
that because Figure \ref{fig:renorm2} is of $g_{\log(M)}\omega_T$,
vertical trajectories of length 1 in Figure \ref{fig:renorm2}
correspond to vertical trajectories of length $M$ on $\omega_T$.
\begin{proof}
Indeed, the family of curves we define are all homotopic and so their intersection with $\hat{\gamma}_2$ is all the same. So for such curves, if the green and purple segments have intersection number zero then the intersection of the red segment and blue segment depends only on the intersection of the purple segment and the red segment, which by construction is either 0 or 1.
\end{proof}
\begin{lemma}
\label{lem:translation}
For all $\epsilon>0$ there exists
$\delta>0$ so that if $\omega \in B(\hat{\omega},\delta) \cap
\mathcal{S}$ and the flow $F^s_\omega$ is minimal then there exists
$\rho<\epsilon$ and $L \in \natls$ so that for any interval $J$ with $|J|=\rho$ we have
\begin{itemize}
\item $\lambda\left(\bigcup_{s\in [0,L)}F^sJ\right)>1-\epsilon$
\item For all $0\leq s<\ell<L$ we have $F^sJ\cap F^\ell J=\emptyset$
\item $F^1J$ is horizontally adjacent to $J$. 
\end{itemize}
\end{lemma}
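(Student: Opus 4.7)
The plan is to exploit the fact that for $\omega \in B(\hat{\omega},\delta) \cap \mathcal{S}$, the defining condition of $\mathcal{S}$ forces the time-one vertical flow $F^1_\omega$ to preserve the horizontal foliation on $\omega$, acting as a rigid horizontal translation $p \mapsto p + (s_\omega, 0)$ with $|s_\omega| \le 1/2$. Since $\hat{\omega}$ is the square torus on which $F^1$ is the identity, $|s_\omega| \to 0$ as $\delta \to 0$; and minimality of $F^s_\omega$ forces $s_\omega \ne 0$ (if $s_\omega = 0$ then $F^1 = \id$, forcing $(0,1) \in \Lambda$, which for $\omega$ near $\hat{\omega}$ pins $\omega = \hat{\omega}$, whose vertical flow is not minimal).

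I would set $\rho := |s_\omega|$ and $L := \lfloor 1/\rho \rfloor$, choosing $\delta$ small enough that $\rho < \epsilon$. Any horizontal interval $J$ of length $\rho$ satisfies $F^1 J = J + (s_\omega, 0)$, a horizontal interval of the same length $\rho$ meeting $J$ at exactly one endpoint, which yields the third bullet. For the other two bullets, realize $\omega$ as $\reals^2/\Lambda$ with $\Lambda$ close to $\integers^2$, and observe that $\bigcup_{s\in[0,L)} F^s J$ is the image of the rectangle $R := J \times [0, L) \subset \reals^2$ under the quotient map $\reals^2 \to \omega$.

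The core of the argument is showing that this quotient map, restricted to $R$, is injective modulo a measure-zero set, which immediately gives pairwise disjointness $F^sJ \cap F^\ell J = \emptyset$ and the area estimate $\lambda\bigl(\bigcup_{s\in[0,L)} F^s J\bigr) = \rho L \ge 1 - \rho > 1-\epsilon$. Writing $\Lambda = \integers \cdot u_1 + \integers \cdot u_2$ with $u_1$ near $(1,0)$ and $u_2 = (s_\omega, 1)$ (the holonomy vector registered by $F^1$), the difference set of $R$ sits in the box $[-\rho, \rho] \times [-L, L]$. The first coordinate of $mu_1 + nu_2$ is approximately $m + n s_\omega$, and the constraint that this lie in $[-\rho, \rho]$ with $\rho \ll 1$ forces $m = 0$; given $m = 0$, the first coordinate is $n s_\omega$, and $|n s_\omega| \le \rho = |s_\omega|$ forces $|n| \le 1$. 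The only nontrivial lattice vector in the bounding box is thus $\pm u_2 = \pm(s_\omega, 1)$, and the associated overlap in $R$ is confined to the shared boundary between vertically consecutive levels of the tower, a set of measure zero. The main technical point is making this lattice estimate uniform in $\omega$ over the $\delta$-neighborhood, but this is routine once the relation $\rho = |s_\omega|$ is fixed.
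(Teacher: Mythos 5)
Your reduction to the lattice picture starts out the same way as the paper's proof ($F^1_\omega$ acts on horizontal leaves as translation by a small nonzero amount $\rho$, with $\rho\ne 0$ forced by minimality; take $|J|=\rho$ and flow for time roughly $1/\rho$), but the key step of your lattice computation is wrong. You assert that the first coordinate of $mu_1+nu_2$ lying in $[-\rho,\rho]$ ``forces $m=0$.'' It does not: the second coordinate only constrains $|n|\lesssim L\approx 1/\rho$, so $|ns_\omega|$ can be of order $1$ and cancel $m=\pm 1$. To see that this missed case really occurs, write $u_1=(c,d)$ with $c\approx 1$ and $d$ small but generically \emph{nonzero} (for $g_t\omega_T$ the horizontal leaves are circles of circumference $e^t$, not $1$, so the short nearly-horizontal lattice vector must have a nonzero vertical component), and normalize $c-ds_\omega=1$. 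Taking $s_\omega=\rho>0$, $d>0$, $m=-1$ and $n=L=\lfloor 1/\rho\rfloor$, the vector $-u_1+Lu_2=(L\rho-1-d\rho,\,L-d)$ has first coordinate of absolute value $<\rho$ whenever $d<1-\{1/\rho\}$ (here $\{1/\rho\}$ is the fractional part), and second coordinate $L-d\in(0,L)$. This is precisely the wrap-around of your flow box: $F^{L-d}J$ meets $J$ in a set of positive length --- nearly all of $J$ when $\{1/\rho\}$ is small --- so the disjointness bullet fails at $s=0$, $\ell=L-d<L$, and the overlap is emphatically not confined to the shared boundary between consecutive levels. (Exact disjointness is used downstream, in the Rokhlin-tower structure of Proposition \ref{prop:two meas}, so this cannot be waved away as a measure-zero issue.)

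The gap is repairable, and the paper's proof shows the cleanest repair: it does not prescribe $L=\lfloor 1/\rho\rfloor$ but instead sets $L=\min\{s>0: F^sJ\cap J\ne\emptyset\}$, so that disjointness of $\{F^sJ\}_{0\le s<L}$ holds by definition, and it obtains the measure bound $\lambda\bigl(\bigcup_{s\in[0,L)}F^sJ\bigr)>1-\rho$ from the observation that $\bigcup_{s\in[0,L+1)}F^sJ$ is all of $\omega$, so that at most one additional unit of flow time --- contributing area at most $\rho$ --- is needed to finish covering. Alternatively you could keep an explicit height and use $L-1$ in place of $L$, redoing the lattice count to exclude nonzero vectors from $(-\rho,\rho)\times(-(L-1),L-1)$; the area estimate $\rho(L-1)>1-2\rho$ still suffices. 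As written, however, the injectivity of $J\times[0,L)\to\omega$ is not established, and it is false for many of the $\omega\in B(\hat{\omega},\delta)\cap\cS$ that actually arise.
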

\begin{proof}
Suppose $p$ is a point in $\omega$, 
and $\omega \in \mathcal{S}$. Then, $F^1p$ is horizontally adjacent to
$p$. For all $\epsilon>0$ there exists $\delta>0$ so that if $\omega$
is in $\mathcal{S} \cap B(\hat{\omega},\delta)$ then $F^1p$ is translated by less than $\frac \epsilon 9$. 
Since the vertical flow on $\omega$ is minimal,  $F^1p \ne
p$. Therefore, $F^1p$ is translated horizontally by some amount $\rho>0$. Let $J$ be a horizontal interval of length $\rho$. We choose $L=\min\{s>0:F^sJ \cap J \neq \emptyset\}$. We have that $\lambda\left(\bigcup_{s\in [0,L)}F^sJ\right)>1-\rho$.  Indeed, 
$\bigcup_{s\in [0,L+1)}F^sJ=\omega$. 
\end{proof}

\begin{prop}\label{prop:two meas}For any $a,b \in \mathbb{Z}$ and $\frac c 5>\epsilon>0$ there exists
 $\delta>0$, 
 $t_0>0$ so that if $g_t\omega_T\in B(\hat{\omega},\delta) \cap \mathcal{S}$, $\lambda(K)>c$ and $t>t_0$ then there exist 
 \begin{itemize} 
 \item $n \in \mathbb{Z}$, $r \in \natls, L \in \mathbb{N}$  
 \item an interval $J \subset K$, and a measurable set $B \subset K$
 \end{itemize} so that the minimal return time (under $T$) to $J$ is
 at least $\frac 3 2 r$, and 
 for $A=\bigcup_{i=0}^rT^iJ $ we have
 $\lambda(A)>\frac 1 2 \lambda(K)-\epsilon,\lambda(B)>\frac 1 2
 \lambda(K)-\epsilon$ and the sets $A$ and $B$ satisfy
$$d(T^nx,T^ax)<\epsilon \text{ for all }x\in A$$ and 
$$d(T^nx,T^bx)<\epsilon \text{ for all }x \in B.$$ 
Moreover, $T^iJ \cap T^jJ=\emptyset$ for all $0\leq i<j\leq \frac 3 2 r$. 
Lastly, if $\nu^{(a)}$ is the joining supported on $\{(x,T^ax)\}$ then for all $x \in A$ we have 
$$d_{KR}\left(\frac 1 L\sum_{i=0}^{L-1}\delta_{(T^ix,T^{i+n}x)},\nu^{(a)}\right)<2\epsilon $$
and if $\nu^{(b)}$ is the joining supported on $\{(x,T^bx)\}$  then for all $x \in B$ we have 
$$d_{KR}\left(\frac 1 L\sum_{i=0}^{L-1}\delta_{(T^ix,T^{i+n}x)},\nu^{(b)}\right)<2\epsilon. $$
\end{prop}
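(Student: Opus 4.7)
The plan is to exploit the geometric renormalization picture from Figures~\ref{fig:original}--\ref{fig:renorm2} together with Lemmas~\ref{lem:crossing} and~\ref{lem:translation}. First we apply Lemma~\ref{lem:translation} to $g_t\omega_T \in B(\hat\omega,\delta)\cap\mathcal{S}$ to produce a small horizontal interval $J_0$ of length $\rho < \epsilon$ and an integer $L$ so that $\{F^s J_0\}_{s\in[0,L)}$ is a pairwise disjoint family covering $1-\epsilon$ of the torus, with $F^1 J_0$ horizontally adjacent to $J_0$. We position $J_0$ on the horizontal line containing the slit $\gamma_2 = g_t K$ and let $J\subset K$ be its preimage under $g_t$, which will be the base of the $T$-tower $A$ in the conclusion.

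Next we use Lemma~\ref{lem:crossing} to convert $F$-dynamics into $T$-dynamics: for $x$ outside a small ``bad'' set (where the green closing segment meets a purple segment), $F^1 x = T^m x$ or $T^{m+1} x$ according to whether the vertical trajectory through $x$ crosses $\zeta_2$, where $m$ is a large integer determined by $t$. Iterating this $k$ times yields $F^k x = T^{k(m+1)-j(x)} x$, where $j(x)\in\{0,1,\dots,k\}$ records the number of $\zeta_2$-crossings in the orbit $x, F^1 x,\dots, F^{k-1} x$. We then choose $k$ so that $k\rho$ is within $\epsilon$ of an integer (approximate $F$-rigidity) and $k \geq |b-a|$. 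Since $F^k x \approx x$ on the good set, each value $k(m+1)-j$ is an approximate $T$-rigidity time on the corresponding level set, yielding the $(km,km+1,\dots,k(m+1))$-approximation structure mentioned in the introduction.

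Given $a,b\in\mathbb{Z}$, select $j_a,j_b\in\{0,\dots,k\}$ with $j_b-j_a = b-a$ and set
\[
n = a + k(m+1) - j_a = b + k(m+1) - j_b.
\]
Then on $\{x : j(x)=j_a\}$ we have $T^n x \approx T^a x$, and on $\{x : j(x)=j_b\}$ we have $T^n x \approx T^b x$. Translating back to the IET via $J \subset K$, these level sets correspond to unions of levels of a $T$-Rokhlin tower over $J$. We choose $r$ so that $A=\bigcup_{i=0}^r T^i J$ matches the $j_a$-level set (and $B$ similarly matches the $j_b$-set); the minimal return time of $T$ to $J$ is at least $\tfrac{3r}{2}$ because the integer-time images $F^i J_0$ are disjoint, and the $T$-returns to $K$ correspond precisely to the $F$-orbit of $J_0$ revisiting $\gamma_2$. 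The Kantorovich--Rubinstein estimates on $\tfrac{1}{L}\sum \hat\delta_{(T^i x,T^{i+n}x)}$ follow because $\{T^i x\}_{i=0}^{L-1}$ is nearly equidistributed in $A$ by the tower structure, together with the pointwise relation $T^n\approx T^a$ on $A$ (giving $\nu^{(a)}$) and analogously on $B$.

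The main obstacle is verifying the measure bounds $\lambda(A),\lambda(B)\geq \tfrac12\lambda(K) - \epsilon$ simultaneously. The random variable $j(x)$ concentrates around its mean $k|\zeta_2|$, so a single level set $\{j(x)=j_a\}$ may be too small on its own; the remedy is to build $A$ and $B$ as carefully chosen unions of tower levels (or to exploit several different choices of $(j_a,j_b)$ with the fixed difference $b-a$) that together capture roughly half of $K$ while remaining compatible with the pointwise approximation $T^n\approx T^a$ (resp.\ $T^b$). A secondary technical issue is controlling the bad set where the hypothesis of Lemma~\ref{lem:crossing} fails; this set has measure $O(\delta)+O(\epsilon)$ and is absorbed into the error term, shrinking $\lambda(A)+\lambda(B)$ by at most $2\epsilon$.
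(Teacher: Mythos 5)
You have the right geometric ingredients (Lemmas~\ref{lem:crossing} and~\ref{lem:translation}, the $m$ versus $m+1$ crossing dichotomy, approximate rigidity from $g_t\omega_T$ being near $\hat\omega$), but the way you assemble them has a genuine gap, and it is exactly at the point you flag: the measure bounds. You define $A$ and $B$ as level sets $\{x: j(x)=j_a\}$ and $\{x: j(x)=j_b\}$ of the $k$-step crossing count and need each to have measure about $\tfrac12\lambda(K)$ with $j_b-j_a=b-a$. Since the level sets of $j$ partition $K$ (up to the bad set), this forces essentially all the mass onto exactly two values of $j$ whose gap is the prescribed integer $b-a$; nothing in the construction arranges this, and for a rotation orbit the Birkhoff sums of an interval indicator concentrate on a few \emph{consecutive} integers, so the two dominant values cannot differ by a general $b-a$. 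Your proposed remedies do not repair this: if you take a union of level sets $\{j=j\}$ for $j$ in some set $S_a$, or several pairs $(j_a,j_b)$ with the fixed difference, then the required exponent $n=a+k(m+1)-j$ changes with $j$, while the proposition demands a \emph{single} $n$; and $T^{j-j'}$ is not close to the identity for small nonzero $j-j'$, so you cannot absorb the discrepancy.

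The paper's proof takes a different route that avoids this entirely. It does not iterate $F^1$ many times and look at the distribution of the crossing count; instead it uses that, since $g_t\omega_T\in B(\hat\omega,\delta)\cap\cS$, a \emph{single} application of $F^1$ already moves every point by only $\rho\le \epsilon/(10(|a|+|b|))$, so $T^m$ (respectively $T^{m+1}$) is within $\rho e^{-t}$ of the identity on the set of points whose unit vertical segment does (respectively does not) cross the purple segment. The sets $A$ and $B$ are defined by requiring this crossing behaviour to be \emph{consistent} over a window of about $2+|a-b|$ time-one steps (the sets $\tilde A$ and $\tilde B$ obtained by intersecting $F^s$-translates of $\hat A$, $\hat B$); since the purple segment has length $\tfrac12+O(\delta)$ and $F^1$ shifts by only $\rho$, each of these sets has measure $\ge\tfrac12\lambda(K)-\epsilon$. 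The single exponent that works on both sets comes from the algebraic identity $n=a+m(a-b)=b+(m+1)(a-b)$: on $A$ one telescopes $|a-b|$ applications of $T^{\pm m}\approx\mathrm{id}$ starting from $T^a$, and on $B$ one telescopes $T^{\pm(m+1)}\approx\mathrm{id}$ starting from $T^b$. I would suggest reworking your argument around this identity and the ``consistent crossing'' definition of $A$ and $B$ rather than around level sets of $j(x)$.
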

\begin{rem}\label{rem:simple} Specializing to the case where $a=0$ and $b=k$, we see that $\frac 1 2 (Id+T^k)$ is in the weak closure of the powers of $T$. 
Veech showed that almost every 3-IET has simple spectrum \cite[Theorem 1.3]{veech metric}. Combining these two facts with Ryzhikov's \cite[Theorem 6.1 (3) and (4)]{ryzh3} 
we have that the spectrum of $T^n$ and $\underbrace{T\times...\times T}_{n \text{ times}}$ are simple for all $n>0$. 
\end{rem}

\begin{proof}
In view of Lemma~\ref{lem:translation}, we can
choose $\delta $ so small that for any $\omega\in B(\hat{\omega},\delta)$,
\begin{enumerate}[label=(\roman*)]
\item The horizontal purple line has length between $\frac 1 2-\frac \epsilon 4$ and $\frac 1 2 +\frac \epsilon 4$.
\item\label{i:shift} $F^1_{\omega}x=\begin{pmatrix} 1  & 0 \\ 0 &
1 \end{pmatrix} \ltimes\begin{pmatrix}\rho\\0\end{pmatrix}x$  where $0
< \rho\leq\frac{\epsilon}{10(|a|+|b|)}$. 
\end{enumerate}
Because $T \times T$ is uniquely ergodic on the support of $\nu^{(a)}$ and $\nu^{(b)}$ there exists $L_0$ so that if 
$d(p_i,T^{a+i}y)<\epsilon$ for all $0\leq i\leq L$ then 
\begin{equation}\label{eq:KR close}
d_{KR}\left(\frac 1 L
\sum_{i=0}^{L-1}\delta_{(T^iy,p_i)},\nu^{(a)}\right)<2\epsilon, \qquad d_{KR}\left(\frac 1 L
\sum_{i=0}^{L-1}\delta_{(T^iy,p_i)},\nu^{(b)}\right)<2\epsilon
\end{equation}
for all $L\geq L_0$ and $y \in K$. Indeed, $T\times T$ is uniquely ergodic on $\text{supp}(\nu^{(a)})$ and $\text{supp}(\nu^{(b)})$ and uniquely ergodic systems have uniform convergence of Birkhoff averages of continuous functions 
 (see for example \cite[Proposition 4.7.1]{BS}). We choose $t_0$ so large that any vertical trajectory of length $e^{t_0}$ on $\omega_T$ crosses $K$ at least $L_0$ times. We further assume $L_0>\max\{|a|,|b|\}$. 

We now set about defining $J$ and $A$. Let $V$ be the horizontal
purple line segment.  Let $\rho$ be as in the previous lemma for
$g_t\omega_T$. For any horizontal interval $I$ on $g_t\omega_T$ of length $\rho$ we have one of the following mutually exclusive possibilities:
\begin{enumerate}[label=(\alph*)]
\item\label{cond:no purple}  $\bigcup_{s\in [0,1)}F^s(I) \cap  V=\emptyset$
\item \label{cond:purple} There exists $s \in [0,1)$ so that $F^s(I) \subset V$. 
\item  $\bigcup_{s\in [0,1)}F^s(I) \cap (\partial V)\neq\emptyset$
\end{enumerate}
Note that by Lemma \ref{lem:crossing}  there exists $m$ so that if $I \subset K$ so that if \ref{cond:purple} holds then $\phi_M(x)=m$ and similarly   if $I \subset K$ so that \ref{cond:no purple} holds then $\phi_M(x)=m+1$.

Let $\hat{A}$ be the set of points in $g_t\omega_T$ which belong to some
horizontal interval of length $\rho$ satisfying \ref{cond:purple}. Let 
\begin{equation}
\label{eq:in hatA}
\tilde{A}= \bigcap\limits_{s\in [-2-|a-b|,2+|a-b|]} F^s_{g_t\omega_T}\hat{A}.
\end{equation} 

%Let 
%$\hat{A}$ be the set of points $x$ on $g_tK$ so that $F_{g_t\omega_T}^\ell F^{s}_{g_t\omega_T}x$ crosses from the horizontal purple strip for some $0\leq \ell\leq 1$ and $-1\leq \ell \leq 0$ and for all $-2-|a-b|\leq s\leq 2+ |a-b|$. 
 %Let $\tilde{A}=\cap_{s\in[-1,1]}F_{g_t\omega_T}^s\hat{A}$.
  Let $\rho>0$ be given by Lemma \ref{lem:translation} and $I$ be an
  interval of length $\rho$ in $\tilde{A}  \cap g_t K$ so that $F^{-1}I\not\subset
  \tilde{A}$. Now $F^1I$ is horizontally adjacent to $I$, and so
  $F^jI$ is horizontally $j \rho$ over from $I$. So by our assumption on the length of $I$, we have 
\begin{equation}
\label{eq:FjI:subset:tildeA}
F^jI \subset\tilde{A}, \quad\text{ for all }\quad 0\leq j \leq \frac{|V|}\rho-2(2+|a+b|)-3\equiv\hat{p}.
\end{equation}
(Note that by \ref{i:shift} and the fact that $|V|>\frac 1 2 -\epsilon
$ we have $\hat{p} \ge 1$.)

We now use what we have done for the flow on $g_t\omega_T$ to establish some of our claims about the IET, $T$. 
Let $r$ be the cardinality of the  set of intervals of length $\rho$
in $\bigcup_{s\in [0,\hat{p})}F^s I \cap K$.  Note that because in
our set $\hat{A}$
a vertical trajectory of length 1 crosses $g_tK \subset
g_t\omega_T$ exactly $m$ times, $r=m\hat{p}$.

Let $A'=g_{-t}\hat{A}\cap K \subset \omega_T \cap K$, which we can
consider as a subset of the domain of $T$ as well (because it is contained in $K$).  Note that we have 
\begin{equation}
\label{eq:phi:et:is:m} \phi_{e^t}(x) = m \qquad \text{for $x \in A'$.}
\end{equation}
We also have for all $x \in A'$, 
\begin{equation}
\label{eq:d:Fet}
d(F^{e^t} x, x) = e^{-t} \rho,
\end{equation}
because when we apply $g_{-t}$ to pull back our dynamics from
$g_t\omega_T$ back to $\omega_T$ we contract horizontal distances by
$e^{-t}$. It follows from (\ref{eq:iet:vs:vert}),
(\ref{eq:phi:et:is:m}) and (\ref{eq:d:Fet}) that
\begin{equation}
\label{eq:shift}
d(T^m(x),x)=e^{-t}\rho \qquad\text{for all $x \in A'$.}
 \end{equation}

Let $J$ denote the interval corresponding to $I$ in the domain of our
IET, $T$. That is, we consider $J=g_{-t}I\subset K \subset \omega_T$,
which since it is in $K$ we consider as an interval in the domain of
$T$.
Let $A=\bigcup_{i=0}^{r-1}T^i J$, which we can consider as a subset of
$K \subset \omega_T$. We now claim that 
\begin{equation}
\label{eq:A:subset:Aprime}
A \subset g_{-t} \tilde{A} \cap K. 
\end{equation}
Indeed, by (\ref{eq:FjI:subset:tildeA}), we have
\begin{equation}
\label{eq:Fs:et:J}
F^{s e^t} J \subset g_{-t}\tilde{A} \qquad\text{for all  $0 \le s \le \hat{p}$.}
\end{equation}
It follows in view of (\ref{eq:phi:et:is:m}), that for $x \in J$, 
\begin{equation}
\label{eq:phi:hatp:et}
\phi_{\hat{p}e^t}(x) = \sum_{k=0}^{\hat{p}-1} \phi_{e^t}(F^{k e^t} x) =
m \hat{p} = r. 
\end{equation}
By (\ref{eq:iet:vs:vert}), we have for $x \in J$ and $i \in \natls$,  
\begin{displaymath}
T^i x = F^s x, \quad\text{where $s$ is such that $\phi_s(x) = i$. }
\end{displaymath}
Since for a fixed $x \in J$, the map $s \to \phi_s(x)$ is monotone
increasing in $s$, for $0 < i < r$ we have in view of (\ref{eq:phi:hatp:et}),
\begin{displaymath}
T^i x = F^s x \quad\text{where $s < \hat{p}$.}
\end{displaymath}
This, together with (\ref{eq:Fs:et:J}) implies
(\ref{eq:A:subset:Aprime}). The same argument shows that 
\begin{equation}
\label{eq:49:extended}
T^\ell x \in A' \qquad\text{for $x \in A$ and $|\ell| \le m(|a-b|+1)$.}
\end{equation}

%Then, by (\ref{eq:FjI:subset:tildeA}), we have
%\begin{displaymath}
%F^{j e^t} J \subset A' \qquad\text{for all  $0 \le j \le \hat{p}$.}
%\end{displaymath}
%It follows in view of (\ref{eq:phi:et:is:m}), that for $x \in J$ and $0 \le j \le \hat{p}$,
%\begin{displaymath}
%\phi_{e^t}(F^{je^t}x)= m
%\end{displaymath}
%for $0\le j \le \hat{p}$.

% and $A'=(g_{-t}\tilde{A}\cap K) \subset \omega_T$. 
We now claim that for all $x \in A$ we have: 
 $$d(T^nx,T^ax)\leq d(T^ax,T^ax)+
\sum_{i=1}^{|a-b|}d(T^{im+a}x,T^{(i-1)m+a}x)\leq \epsilon e^{-t}\leq \epsilon.$$
Indeed, by (\ref{eq:49:extended}) and \eqref{eq:shift} we have $d(T^{jm+a}x,T^{(j-1)m+a}x)=\rho e^{-t}$ for all $|j|\leq |a-b|$, because $|a|<m$.  We obtain the second inequality by \ref{i:shift}.

We now show that for all $x \in A$ 
$$d_{KR}(\frac 1 m \sum_{i=0}^{m-1}\delta_{(T^ix,T^{i+n}x)},\nu^{(a)})<2\epsilon.$$ By construction, if $x \in A$ then $T^ix \in A'$  for all $-m\leq i\leq m$. 
So we have that $d(T^{i+n}x,T^{i+a}x)<\epsilon$ for all $|i|\leq |m|$.
 So by \eqref{eq:KR close} and the fact that $m\geq L_0$ we have our condition on $d_{KR}$. 
 
 We now show that $\lambda(A)>\frac 1 2 \lambda(K)-\epsilon$. 
 This follows from the fact that by \ref{i:shift} the measure of the set of $x \in g_t\omega_T$ so that $F^{\ell}x$ crosses the horizontal purple strip for $0\leq\ell \leq 1$ and $-1\leq \ell\leq 0$ and $F^s_{g_t\omega_T}x$ does not have this property for some 
 $-1\leq s\leq 1$ has measure at most $2\frac{\epsilon}{10(|a|+|b|)}$. By our condition on the length of the purple horizontal strip, the measure condition on $A$ is completed. 
 
 The fact that the return time of $T$ to $J$ is at most $\frac 32 r$ follows from the fact that the measure of $A^c$ is at most $\frac 1 2\lambda(K) +\epsilon$ and so the orbit of $J$ after leaving $A$ and before returning to $J$ has measure at least 
 $\frac 1 2 \lambda(K)-\epsilon-\epsilon>\frac 1 2 \lambda(A)$. So $J$ has at least $\frac 1 2 r$ images outside of $A$ before part of it returns.

We now similarly define $B \subset A^c$ with the desired properties. First let 
$$\hat{B}=\{x \in g_t\omega_T : \cup_{s\in [-3-|a-b|,3+|a-b|]}F^s_{g_t\omega_T}(x) \cap V=\emptyset\}.$$ Similarly to before let $\tilde{B}=\cap_{s\in [-1,1]}F^s_{g_t\omega_T}\hat{B}$ and $B=g_{-t}\tilde{B} \cap K \subset \omega_T$, considered as a subset of the domain of $T$. Now as above, by Lemma \ref{lem:crossing} if $x \in \tilde{B}$ then we have that a vertical trajectory of length 1 or -1 emanating from $x$ crosses $g_tK$ exactly $m+1$ times. 
Moreover, $F^s_{g_t\omega_T}x$ has this property for all $-|a-b|\leq s\leq |a-b|$. Since $n=b+(m+1)(a-b)$, for any $x \in B$ and $|i|\leq m$ we have $d(T^nT^ix,T^bT^ix)\leq \sum_{i=1}^{|a-b|}d(T^{i(m+1)}x,T^{(i-1)(m+1)}x)\leq \epsilon$. 
Thus, as above we have $d_{KR}(\frac 1 m \sum_{i=0}^{m-1}\delta_{(T^ix,T^{i+n}x)},\nu^{(b)})<2\epsilon$ for all $x \in B$.
The fact that $\lambda(B)>\lambda(K)-\epsilon$ is similar to the case of $\lambda(A)$ above. 

\end{proof}

Now given two number $a,b$ we may iteratively apply Proposition \ref{prop:two meas} to obtain the assumptions of Proposition \ref{prop:KSV:joining}.  %Indeed, by our construction $A$ is chosen to be images of an interval $J$, a segment of $K$ that that hits.
Indeed, we choose $\epsilon_i$ satisfying assumptions (c) and (e). We apply Proposition \ref{prop:two meas} to the pair of numbers $(a,b)$ and $\epsilon=\epsilon_1$ to obtain $m$, $A,B$ and $r$. %Further we obtain $r$ from Corollary \ref{cor:adapted}.
Denote $m$ by $a_1$. 
We apply Proposition \ref{prop:two meas} to the pair of numbers $(a,b)$ and $\epsilon=\epsilon_1$ to obtain $m$, $r'$, $A',B'$,  and denote $m$ by $b_1$. % and $r'$ from Corollary \ref{cor:adapted}. 
We repeat this procedure with $a_1$ and $b_1$ in the place of $a$ and $b$ and $\epsilon_2$ in place of $\epsilon$ and obtain $a_2,b_2$. We further request that the interval $J$ produced by Proposition \ref{prop:two meas} %Corollary \ref{cor:adapted} 
 have $\max\{r,r'\}\lambda(J)<\epsilon_2$. Iterating this we have the conditions of Proposition.

\begin{proof}[Proof of Theorem \ref{theorem:Veech}]
Let $\mu$ be an invariant measure for $T \times T$. By Corollary \ref{cor:WOT close} there exists $n_1,...,n_d$ so that $\nu_i$ is the joining supported on $\{(x,T^{n_i}x)\}$ and $d_{KR}(\mu,\frac 1 d\sum_{i=1}^d\nu_i)<\epsilon$. 
For each pair $n_i,n_{i+1}$ and $\frac{\epsilon}2$ we apply Proposition \ref{prop:two meas} % and Corollar \ref{cor:adapted}
 to obtain $\delta$, $t_0$. We further do this for the pair $n_d,\, n_{1}$. We choose $\delta$ to be the smallest of these and $t_0$ to be the largest. We obtain $t>t_0$ so that $g_t\omega_T \in B(\hat{\omega},\delta)$. We then obtain $m_i$, $r_i$ which we denote $n_i^{(1)}$ and $r_i^{(1)}$. We now repeat this $n_i^{(1)}$ in place of $n_i$, $\frac \epsilon {2^2}$ in place of $\frac{\epsilon}2$ and $\max\{r_i^{(1)}\}$. % in place of no condition in invoking Corollary \ref{cor:adapted}.
  In doing this we obtain $n_i^{(2)}$ and $r_i^{(2)}$. We repeat this recursively having our $k$th choice of $\epsilon$ be $\frac \epsilon {2^k}$.

  We are now left to prove that there is an ergodic self-joining that is neither $\lambda \times \lambda$ nor one-to-one on almost every fiber. Let $\nu_0^{(1)}$ be the self-joining carried on $\{(x,x)\}$ and $\nu_0^{(2)}$ be the self-joining carried on $\{(x,Tx)\}$.
   Let $\epsilon_i>0$ satisfy that 
   \begin{equation}\label{eq:keep away} d(x,Tx)>40C\sum_{i=1}^\infty \epsilon_i
   \end{equation} and 
   \begin{equation}\label{eq:not prod} d_{KR}(\lambda \times \lambda, \frac 1 2 (\nu_0^{(1)}+\nu_0^{(2)}))>4 C \sum_{i=1}^\infty \epsilon_i,
   \end{equation} 
   where $C$ is as in the conclusion of Proposition \ref{prop:KSV:joining}. 
   We apply Proposition \ref{prop:KSV:joining} for these $\epsilon_i$ as above to obtain $\nu_i^{(1)},\nu_i^{(2)}$ and their weak-* limit $\nu_\infty$, an ergodic measure which by \eqref{eq:not prod} is not $\lambda \times \lambda$. 
   The following lemma show $\nu_\infty$ can not be one-to-one on almost every fiber. 
  \begin{lemma}If $\mu$ is a measure that is one-to-one on almost every fiber then $\mu$ can not be the weak-* limit of a sequence of measures $\tilde{\nu}_i$ that are two-to-one on almost every fiber and so that 
  $$\lambda(\{x:diam(supp(\tilde{\nu}_i)_x)>\delta\})>\frac 3 4 $$
  for infinitely many $i$. 
  \end{lemma}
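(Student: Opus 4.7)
The plan is to translate the hypotheses into operator-theoretic language via the operators $A_\nu$ from Section~\ref{sec:strongly rank1}, and extract a quantitative contradiction from the defect identity
\[
\|g\|_2^2 - \|A_\nu g\|_2^2 \;=\; \tfrac{1}{2}\int \big(g(y_1)-g(y_2)\big)^2\,d(\nu\otimes_\lambda\nu)(x,y_1,y_2),
\]
where $\nu\otimes_\lambda\nu$ denotes the fibered square of $\nu$ over its first marginal $\lambda$. First I would verify that weak-$*$ convergence $\tilde\nu_i\to\mu$ yields weak operator convergence $A_{\tilde\nu_i}\to A_\mu$: for $h,g\in C(Y)$, $\langle h,A_\nu g\rangle_{L^2(\lambda)}=\int h(x)g(y)\,d\nu(x,y)$ is continuous in $\nu$, and the uniform bound $\|A_\nu\|_{\mathrm{op}}\le 1$ extends this to all $h,g\in L^2(\lambda)$ by density.

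Next I would use the one-to-one assumption to identify $A_\mu$: since $\mu_x=\hat\delta_{S(x)}$ for some $\lambda$-preserving measurable $S:Y\to Y$, the operator $A_\mu=U_S$ is an isometry of $L^2(\lambda)$. Since $\|\cdot\|_2$ is lower semicontinuous under weak $L^2$ convergence and $\|A_{\tilde\nu_i}g\|_2\le\|g\|_2=\|A_\mu g\|_2$ always holds, I would conclude that $\|A_{\tilde\nu_i}g\|_2\to\|g\|_2$ for every $g\in L^2(\lambda)$.

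The contradiction would then come from evaluating the defect identity on the $\tilde\nu_i$. In the application $\tilde\nu_i=\tfrac12(\nu_i^{(1)}+\nu_i^{(2)})$ with each $\nu_i^{(\ell)}$ supported on the graph of a power of $T$, so the fiber $(\tilde\nu_i)_x$ consists of two equally weighted point masses $\tfrac12(\hat\delta_{a_i(x)}+\hat\delta_{b_i(x)})$, and the identity reduces to
\[
\|g\|_2^2-\|A_{\tilde\nu_i}g\|_2^2 \;=\; \tfrac14\int \big(g(a_i(x))-g(b_i(x))\big)^2\,d\lambda(x).
\]
Taking $g(y)=y$ on $Y\subset\reals$ and using the diameter hypothesis $|a_i(x)-b_i(x)|>\delta$ on a set of $\lambda$-measure at least $3/4$ gives $\|g\|_2^2-\|A_{\tilde\nu_i}g\|_2^2\ge 3\delta^2/16$ along the given subsequence, contradicting the conclusion of the previous paragraph. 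The only real computation is the defect identity itself, which is a short unwinding using that both marginals of a self-joining $\nu$ equal $\lambda$; the secondary point to verify is that the two atoms of $(\tilde\nu_i)_x$ carry uniformly positive weights, which is transparent from the explicit averaging construction of $\tilde\nu_i$.
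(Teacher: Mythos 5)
Your proof is correct, but it takes a genuinely different route from the paper. The paper argues directly at the level of the Kantorovich--Rubinstein metric: it writes $\mu$ as the graph joining of a measurable $f$, uses Lusin's theorem to find a small interval $I$ on which $f$ is nearly constant, and builds an explicit $1$-Lipschitz bump $g$ on $Y\times Y$ vanishing near the graph of $f$ over $I$ but of definite size at the second atom of $(\tilde\nu_i)_x$; this forces $d_{KR}(\mu,\tilde\nu_i)$ to be bounded below, contradicting weak-$*$ convergence. Your argument instead passes to the Markov operators $A_\nu$ of Section~\ref{sec:strongly rank1}: weak-$*$ convergence gives WOT convergence $A_{\tilde\nu_i}\to A_\mu$, the one-to-one hypothesis makes $A_\mu$ an isometry (here you use that the second marginal of $\mu$ is $\lambda$, so the graph map is measure-preserving), lower semicontinuity of the norm forces $\|A_{\tilde\nu_i}g\|_2\to\|g\|_2$, and the defect identity (whose verification, which you correctly reduce to both marginals being $\lambda$, checks out) bounds $\|g\|_2^2-\|A_{\tilde\nu_i}g\|_2^2$ below by $3\delta^2/16$ for $g(y)=y$. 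What your approach buys is a cleaner conceptual statement -- ``graph joining $\Leftrightarrow$ isometric Markov operator, and the norm defect quantifies fiber spreading'' -- with no Lusin set or hand-built test function, and it meshes with the $A_\sigma$ machinery the paper already set up; what the paper's approach buys is an explicit quantitative lower bound in the metric $d_{KR}$ actually used elsewhere, and no reliance on the marginals being $\lambda$ on the second factor beyond what is needed for the application. Two caveats, neither fatal: your argument uses that all the measures involved are genuine self-joinings (both marginals $\lambda$), which is not literally in the lemma's hypotheses but holds where the lemma is applied; and, as you note, it uses that the two atoms of $(\tilde\nu_i)_x$ carry weight $1/2$ each -- but the paper's own proof implicitly needs the far atom to carry definite mass as well (indeed the lemma as literally stated fails if one atom is allowed to carry vanishing weight), and in the application $\tilde\nu_i=\tfrac12(\nu_i^{(1)}+\nu_i^{(2)})$ makes this automatic.
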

  \begin{proof} There exists $f:[0,1) \to [0,1)$ measurable so that $\mu$ is carried on $\{(x,f(x))\}$. By Lusin's Theorem there exists $\mathcal{K}$ compact with $\lambda(\mathcal{K})>\frac {99}{100}$ so that $f|_{\mathcal{K}}$ is uniformly continuous. 
  Let $s>0$ be so that $d(f(x),f(y))<\frac \delta 8$ for all $x,y \in \mathcal{K}$ with $d(x,y)<s$. 
  Choose an interval $I$ with  $|I|\leq s$, $\lambda(I\cap \mathcal{K})>\frac {99}{100}\lambda(I)$ and 
  \begin{equation}\label{eq:last} \lambda(\{x\in I:diam((\tilde{\nu}_i)_x)>\delta\})>\frac 1 2
  \end{equation} for infinitely many $i$. 
  Let $p=f(x)$ for some $x \in I \cap \mathcal{K}$ and let $g:[0,1) \times [0,1) \to \mathbb{R}$ be a 1-Lipschitz function so that 
  \begin{itemize}
  \item $g|_{I^c\times[0,1)}\equiv 0$
  \item $g|_{I \times B(p,\frac \delta 4)}\equiv 0$
  \item $g(x,y)=\min\{d(x,\partial I), d(y, \partial B(p,\frac \delta 4)),\frac \delta 4\}$ for all $(x,y) \in I \times (B(p,\frac \delta 4))^c$.
  \end{itemize}
  Now $\int g d\sigma\leq .01 |I|\cdot \|g\|_{\sup}\leq  .01|I|\cdot \min\{\frac \delta 4,\frac {|I|} 2\}$. On the other hand if $\tilde{\nu}_i$ satisfies \eqref{eq:last} then on a set of $x \in I$ of measure at least $\frac {|I|}3$ we have one of the two points in $(\tilde{\nu}_i)_x$ is at least $\frac{\delta}2$ away from $p$. A subset of these $x$ of measure at least $\frac{|I|}6$ satisfies $d(x,\partial I)\geq \frac 1 {12} |I|$. So 
  $\int g d\tilde{\nu}_i\geq \frac {|I|} 6  \min\{\frac \delta 4,\frac {|I|} {12}\}$. Since $g$ is 1-Lipschitz it follows that $d_{KR}(\mu,\tilde{\nu}_i)>|I|\min\{|I|(\frac 1 {72}-\frac 1 {200}), \frac \delta {24}-\frac \delta {400}\}$ proving the lemma.
  \end{proof}
  Letting $\tilde{\nu}_i=\frac 1 2 (\nu_i^{(1)}+\nu_i^{(2)})$ and seeing that by \eqref{eq:keep away} they satisfy the condition in the lemma, we see $T$ is not 2-simple. 
\end{proof}

\end{document}